\documentclass[11pt,reqno]{amsproc}

\title[]{Reconnection in 2D Inertial Magnetohydrodynamics}
\author{Lydia Boubendir}
\address{Department of Mathematics, Princeton University, Princeton, NJ 08544}
\email{lydia.boubendir@princeton.edu}
\author{Peter Constantin}
\address{Department of Mathematics, Princeton University, Princeton, NJ 08544}
\email{const@math.princeton.edu}
\author{Hezekiah Grayer II}
\address{Department of Mathematics, Stanford University, Stanford, CA 94305}
\email{hezekiah@stanford.edu}
\author{Zhongtian Hu}
\address{Department of Mathematics, University of Maryland, College Park, MD 20742}
\email{zh1077@umd.edu}

\usepackage[margin=1.5in]{geometry}
\usepackage{amsmath, amsthm, amssymb, bm}
\usepackage{times}
\usepackage{color}
\usepackage{microtype}
\usepackage{hyperref,comment}
\usepackage{graphicx}
\usepackage{subcaption}
\usepackage{tikz,siunitx,luatex85}
\usetikzlibrary{calc}
\usepackage[export]{adjustbox}

\newcommand{\blue}[1]{\textcolor{blue}{#1}}

\newcommand{\pa}{\partial}
\newcommand{\la}{\label}
\newcommand{\fr}{\frac}
\newcommand{\na}{\nabla}
\newcommand{\be}{\begin{equation}}
\newcommand{\ee}{\end{equation}}
\newcommand{\ba}{\begin{array}{l}}
\newcommand{\ea}{\end{array}}
\newcommand{\Rr}{{\mathbb R}}
\newtheorem{thm}{Theorem}
\newtheorem{prop}{Proposition}
\newtheorem{lemma}{Lemma}

\newtheorem{cor}{Corollary}
\newcommand{\beg}{\begin}

\newcommand{\eps}{\epsilon}
\newcommand{\R}{\mathbb{R}}

\newcommand{\bbS}{\mathbb{S}}
\newcommand{\U}{\mathbb{U}}
\newcommand{\T}{\mathbb{T}}
\renewcommand{\div}{{\mbox{div}\,}}
\newcommand{\D}{\Delta}
\newcommand{\V}{\mathbb{V}}

\newcommand{\p}{\partial}
\newcommand{\pt}{\p_t}

\newcommand{\pb}{\p_\beta}

\newcommand{\calI}{\mathcal{I}}

\newcommand{\calF}{\mathcal{F}}

\def\ygap{-25mm}

\DeclareMathOperator{\supp}{supp}

\date{today}
\subjclass[2020]{Primary 35Q31; Secondary 35Q85}
\keywords{inertial magnetohydrodynamics, magnetic reconnection, two-fluid plasma, vortex patches, contour dynamics, zero resistivity limit.}

\begin{document}
\begin{abstract}
We consider two-fluid equations of inertial magnetohydrodynamics in two spatial dimensions.  These equations can be recast as the evolution of two active scalars,
with fluid-like interaction kernels. The equations come in two flavors, left-handed and right-handed. They are conservative and have global unique solutions for both  smooth initial data and for initial data in Yudovich class $L^1(\Rr^2)\cap L^{\infty}(\Rr^2)$. We prove that the inviscid limit holds in Yudovich class. We prove the persistence of 
$C^{2,\alpha}$ regularity of patch boundaries through merger events for the left-handed equations. We finally show numerical examples of magnetic reconnection, validating known rigorous results and uncovering new cases.
\end{abstract}

\maketitle
\section{Introduction}
Magnetic reconnection, a dynamical change of topology of the magnetic field lines, is a physical phenomenon of great importance to magnetohydrodynamics and plasma physics \cite{biskamp}, \cite{taylor1986}. The equations of ideal magnetohydrodynamics do not permit topology change in the absence of singularities. Magnetic resistivity can be responsible for a change of topology, but its time scale is in general too slow to explain the observed phenomena. A more rapid mechanism which does not require either singularities or resistivity was proposed in the physics literature \cite{boozer}, \cite{twoplasmas}, \cite{morrison}. The mechanism is due to inertia of electrons and it  is described using  fluid-like models with two fluids. The models are Hamiltonian, are  derived from action principles and have been studied numerically, but not by rigorous mathematics, as far as we know, before the work \cite{chu}. That work was devoted to two-dimensional theory and proved regularity results and topological change due to merger of two active scalars. 

The present paper is a continuation of \cite{chu}.  We consider two-dimensional inertial magnetohydrodynamics where evolution of the electronic vorticity $\omega$ is coupled to the evolution of the magnetic canonical momentum $F$. The system of equations is
\be
\left\{
\begin{aligned}
\pa_t F + \{\psi, F\} + \{\phi, \omega\} &= 0,\\
\pa_t \omega + \{\psi, \omega\} + \{\phi, F\} &=0,
\end{aligned}
\right .
\la{origeq}
\ee
in $\Rr^2$, where $\{f, g\} = \pa_1f\pa_2 g-\pa_2f\pa_1g$ is the Poisson bracket. 
 The electron fluid's stream function $\psi$ is related to the vorticity by 
\begin{equation}
    \Delta \psi = \omega,
    \la{psiomega}
\end{equation}
and the magnetic potential $\phi$ is related to the magnetic canonical momentum by
\begin{equation}
   -\Delta \phi + \phi  = \pm F. 
\la{phiF}
\end{equation}
The different $\pm$ cases are distinct, but are both conservative and derived from action principles. In \cite{chu} and in the present work, the  $(+)$ case is called \emph{right-handed}, and the $(-)$ case is called \emph{left-handed}. Although only the right-handed system was considered in the physics literature 
\cite{twoplasmas}, \cite{morrison}, both cases are mathematically interesting, yet quite different. Both systems can be re-written by a linear change of the dependent variables as evolution systems of two active scalars. Active scalars are real valued functions of space and time which evolve in time by pure transport, carried  by incompressible velocities they instantaneously  create. The velocities are computed from the scalars using  time-independent, constant coefficient, and  (in the present case) linear
equations of state. The Yudovich class is the class of active scalars whereby (both) the scalars are bounded and integrable in space. Patches are particular cases, where the scalars are indicator functions of bounded domains in $\Rr^2$.  The Yudovich class is preserved in time, and weak solutions exist and are unique in it 
\cite{chu}. Each of the two flavors (right-handed and left-handed) has screened and unscreened versions. In the unscreened versions \eqref{phiF} is replaced by 
$-\Delta \phi = \pm F$, i.e. the term $\phi$ is dropped. (That term physically corresponds to screening by an electronic skin depth).

The key feature of interest in the model \eqref{origeq} is the prediction of magnetic reconnection, i.e., topological change of the magnetic field $B = \nabla^{\perp} \phi$, without resistivity. Until recently, this feature was only captured  in numerical simulations  \cite{twoplasmas}.  In \cite{chu} the global well-posedness of \eqref{origeq} was obtained both in the Yudovich class and in the smooth category. Topological change was shown to occur in both the case of smooth solutions and patch solutions, and the results were for both the right-handed and left-handed systems. 

In this paper we prove the convergence of solutions of vanishing resistivity to the solutions of the inertial MHD system \eqref{origeq}. This is done by adapting the method of \cite{cde} to the case of two fluids. The result of \cite{cde} was for the two dimensional torus $\mathbb T^2$, and extending it to the whole space
presents certain challenges. One of the key elements of the result of \cite{cde} was the exponential integrability of the gradient of velocity. In Yudovich class the velocity is not Lipschitz-continuous, and the exponential integrability is a weaker surrogate for that. In the whole space, obviously there cannot be a global exponential integrability, and we have to localize the result. In addition, the decay of velocities is too slow for energy methods, and the velocities do not have self-contained equations like in the Euler and Navier-Stokes equations. We use methods form \cite{cwu} to control decay at spatial infinity of the active scalars, in the resistive case, and overcome thus the technical obstructions to global Yudovich class zero resistivity  limit. The proof is presented in Section \ref{sec:Yudo}.

The left-handed unscreened system consists of a pair of incompressible Euler equations which do not interact with each other. As it is known \cite{bc}, \cite{chemin}, in the Euler equations vortex patch boundaries stay smooth through the evolution, if they start smooth. In Section  \ref{sec:patc}
we show, for the left-handed screened system, that  $C^{1,\alpha}$- and $C^{2,\alpha}$-regularity of patch boundaries persists globally, even through merger. 

We conduct a numerical study of reconnection for patch solutions. In Section \ref{sec:nume1} we describe the algorithm, and  in Section \ref{sec:nume2} we describe the results of the numerical simulations.  These are as follows. 
For the left-handed screened equations,  initialized by a disk and an ellipse, we show  merger and then un-merger as the ellipse rotates through the disk,
in complete analogy with the pure Eulerian dynamics of the unscreened equations.
 In the right-handed unscreened case,  initialized by two patch disks with the same sign,  we observe numerically translational merger. The patches travel along a common path and merge. When the right-handed unscreened equations  are initialized by two patch disks of opposite signs, we observe orbiting patches deforming but not merging, indicating the possibility of  (quasi) periodic solutions. We also show that the symmetry that was proved rigorously in \cite{chu} to lead to merger, does so  numerically as well. Namely, we show that for right-handed unscreened and screened equations, initialized by four disks, with the same sign across the horizontal axis, and opposite sign across the vertical axis, there is merger across the horizontal axis.

\section{The active scalar system}
The sum and difference of the equations in the system \eqref{origeq} yield
\be
\left\{
\begin{aligned}
\pa_t (F + \omega) + \{\psi + \phi , F + \omega \} &= 0,\\
\pa_t (F - \omega)+ \{\psi - \phi , F - \omega\} &=0.
\end{aligned}
\right .
\ee
We define the scalars
 \be
 \sigma_{\pm} = F\pm \omega
 \la{sigmapm}
 \ee
 and organize them into the vector 
 \be
 \sigma = (\sigma_+, \sigma_-).
 \la{sigma}
 \ee
 The inertial MHD system \eqref{origeq} is  thus equivalent to the transport equations 
 \be
 \pa_t \sigma_{\pm} + \mathbb V_{\pm}(\sigma)\cdot\na \sigma_{\pm} =0,
 \la{sigmaeq}
 \ee
 where in the \emph{right-handed} system 
  \be
 \mathbb V_{\pm}(\sigma) = \mathbb U(\omega)\pm \mathbb B(F),
 \la{mathbbV}
 \ee
 and in the \emph{left-handed} system
   \be
 \mathbb V_{\pm}(\sigma) = \mathbb U(\omega)\mp \mathbb B(F).
 \la{mathbbV2}
 \ee
 The operators are 
 \be
 \mathbb U (\omega) = \na^{\perp}(\D^{-1})\,\omega,
 \la{mathbbU}
 \ee
 and
 \be
 \mathbb B( F) = \na^{\perp}(1 - \D)^{-1}F.
 \la{mathbbB}
 \ee
By construction,  $\mathbb U, \mathbb B$ and $ \mathbb V_{\pm}$ have divergence free ranges. The operator $\mathbb U$ is the Biot--Savart law from the incompressible Euler equations, and the operator $\mathbb{B}$ is a screened version of $-\mathbb{U}$.


\section{The ideal  inertial MHD limit}
\label{sec:Yudo}
We first introduce the viscous version of \eqref{sigmaeq}:
\begin{equation}
    \label{sigmaeqvisc}
    \pa_t \sigma_{\pm}^{\nu} + \mathbb V_{\pm}(\sigma^{\nu})\cdot\na \sigma_{\pm}^{\nu} - \nu \Delta \sigma_\pm^{\nu} =0,\quad \nu > 0.
\end{equation}
This is the setting where the conductive fluid has  momentum dissipation and ohmic dissipation of equal rate  $\nu$. The limit $\nu \to 0$ recovers ideal inertial MHD \eqref{origeq}, or equivalently, \eqref{sigmaeq}. We show that any Yudovich solution of compact support to \eqref{sigmaeq} may be recovered in this limit in a strong sense. We consider equal viscosities only for simplicity of exposition. The results hold for viscosities that vanish independently. We denote the space $\mathbb Y = L^1(\R^2) \cap L^\infty (\R^2)$ and its product $\mathbb Y^2 = \mathbb Y \times \mathbb Y$.

We recall here information about the solutions.
\beg{prop}\la{prel}
We consider initial data $\tau = (\tau_+, \tau_-) \in \mathbb Y^2 $ with compact support included in a ball of radius $L$ and we take an arbitrary  time interval $[0,T]$.
We assume that 
\be
\|\tau\|_{L^{\infty}} \le \Omega_{\infty}.
\la{liftyib}
\ee
The unique Yudovich solution of system \eqref{sigmaeq} and the unique solution of \eqref{sigmaeqvisc} with the same initial data obey
\be
\|\mathbb V_{\pm}(\sigma)\|_{L^{\infty}} \le V_{\infty} 
 \la{liftyb}
 \ee
 where
 \be
 V_{\infty} =  CL\Omega_\infty
 \la{vifty}
 \ee
with $C>0$ an absolute constant. We consider the ball
\be
B_T = \{x\in\Rr^2\left | \right. |x|\le L\left(1 +V_{\infty}\fr{T}{L}\right)\}.
\la{BT}
\ee
Then, the support of the solution $\sigma(t)$ of \eqref{sigmaeq} is included in $B_T$. The solution of \eqref{sigmaeqvisc} with the same initial data obeys
\be
\| \sigma^{\nu}(t)\|_{L^p(\Rr^2\setminus D_{T})} \le C\|\tau\|_{L^p}e^{-\frac{LV_\infty}{\nu}}
\la{outbound}
\ee
for any $1\le p\le \infty$ and all $t\le T$, where 
\be
D_T = \{x\in\Rr^2\left | \right. |x|\le 9L\left (2 +V_{\infty}\fr{T}{L}\right)\}.
\la{DT}
\ee
We took $Re = \fr{V_{\infty}L}{\nu}\ge 1$. 
\end{prop}
\beg{proof}
The bound \eqref{liftyb} follows from the Biot-Savart law and from the fact that $L^p$ norms of $\sigma^{\nu}$ are non-increasing in time. The proof of \eqref{outbound} follows the proof of Proposition 4 of \cite{cwu}: the viscous solutions $\sigma^{\nu}$ obey bounds with exponential weights $e^{\frac{V_{\infty}}{\nu}\sqrt{1 + |x|^2}}$. The expanding domain is $\{x\in\Rr^2\left | \right. |x|\le 9L(1 +Re^{-1}+V_{\infty}\fr{T}{L})\}\subset D_T$.
\end{proof}
\begin{thm}
    \label{thm:main}
    Let $\sigma = (\sigma_+, \sigma_-)$ be the unique Yudovich solution of system \eqref{sigmaeq} with initial data $\tau = (\tau_+, \tau_-) \in \mathbb Y^2 $ compactly  supported in a ball of radius $L$.   Let $T>0$ be arbitrary and let $\sigma^{\nu} = (\sigma_+^{\nu}, \sigma_-^{\nu})$ be the solution to system \eqref{sigmaeqvisc} with the initial data $\tau^{\nu} = (\tau_+^{\nu}, \tau_-^{\nu})$. Assume that the initial data belong to a ball in $\mathbb Y^2$, and in particular, 
  \be
  \|\tau\|_{L^{\infty}}\le \Omega_{\infty}, \|\tau^{\nu}\|_{L^{\infty}} \le \Omega_{\infty}.
  \la{initomegas}
  \ee
 Assume that $\tau$ is supported in the ball of radius $L$ and that \eqref{outbound} holds. For any $p \in [1,\infty)$, if 
 \be   
   \lim_{\nu\to 0}\|\tau^\nu -\tau\|_{L^p} = 0,
   \la{initconv}
   \ee
   then
    \be
    \lim_{\nu \to 0} \sup_{t \in [0,T]} \|\sigma^{\nu}_\pm - \sigma_\pm\|_{L^p} = 0.
    \la{concluth}
    \ee
\end{thm}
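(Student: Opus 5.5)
The plan is to adapt the Constantin--Drivas--Elgindi argument \cite{cde} to the two-scalar system. I will localize all estimates to the ball $D_T$ of Proposition \ref{prel} and treat the exterior with the exponential bound \eqref{outbound}.

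\emph{Step 1: the case of smooth data.} Mollify the initial data, $\tau_\ell = \tau * \rho_\ell$ and $\tau^\nu_\ell = \tau^\nu*\rho_\ell$. Let $\sigma_\ell$ and $\sigma^\nu_\ell$ be the corresponding solutions of \eqref{sigmaeq} and \eqref{sigmaeqvisc}. The triangle inequality gives
\[
\|\sigma^\nu_\pm-\sigma_\pm\|_{L^p}\le \|\sigma^\nu_\pm-\sigma^\nu_{\ell,\pm}\|_{L^p}+\|\sigma^\nu_{\ell,\pm}-\sigma_{\ell,\pm}\|_{L^p}+\|\sigma_{\ell,\pm}-\sigma_\pm\|_{L^p}.
\]
For the middle term I take the difference $\delta_\pm=\sigma^\nu_{\ell,\pm}-\sigma_{\ell,\pm}$ and work in $L^2$. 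Using $\|\Delta\sigma_{\ell}\|_{L^2}$ (controlled by $\ell^{-2}$ times a factor depending on $\|\nabla \mathbb V\|_{L^\infty}$ of the smooth solution), we get
\[
\frac{d}{dt}\|\delta\|_{L^2}^2\lesssim \nu\|\Delta \sigma_\ell\|_{L^2}\|\delta\|_{L^2}+\|\mathbb V_\pm(\delta)\|_{L^2(D_T)}\|\nabla\sigma_\ell\|_{L^\infty}\|\delta\|_{L^2}.
\]
The nonlocal velocity term $\mathbb V_\pm(\delta)$ is not in $L^2(\R^2)$ in the unscreened part, which is why I localize. The mean of $\delta$ vanishes, and $\delta$ decays exponentially outside $D_T$ by \eqref{outbound}, so on $D_T$ a Hardy--Littlewood--Sobolev bound $\|\mathbb U(\delta)\|_{L^2(D_T)}\lesssim \|\delta\|_{L^{4/3}}$ is available. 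That in turn is bounded by $|D_T|^{1/4}\|\delta\|_{L^2(D_T)}$ plus an exponentially small tail. Grönwall then gives that the middle term goes to $0$ as $\nu\to0$ for fixed $\ell$, with a rate like $\nu T\ell^{-2}e^{C T\|\nabla\sigma_\ell\|_\infty}$.

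\emph{Step 2: the mollification errors, uniformly in $\nu$.} This is the main obstacle. In the Yudovich class $\|\nabla \sigma_\ell\|_{L^\infty}$ grows double-exponentially in $\ell^{-1}$, so a naive stability estimate fails. Following \cite{cde}, I use the Lagrangian picture. The velocity $\mathbb V_\pm(\sigma)$ is log-Lipschitz, with $|\nabla \mathbb V|$ exponentially integrable on $D_T$ (again localized, by the Calderón--Zygmund bound $\|\nabla \mathbb V\|_{L^q(D_T)}\lesssim q(\Omega_\infty+\|\tau\|_{L^1})$). This yields a quantitative Hölder modulus of the flow maps, uniform in $\nu$ for the stochastic flows of \eqref{sigmaeqvisc}. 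Consequently the $L^p$ modulus of continuity of translations, $\sup_{|h|\le r}\|\sigma^{(\nu)}(\cdot+h)-\sigma^{(\nu)}\|_{L^p}$, is propagated with only a power loss $r^{\exp(-Ct)}$. Combined with the $L^p$ stability of the data, $\|\tau^\nu-\tau\|_{L^p}\to0$ from \eqref{initconv}, this controls $\|\sigma^\nu-\sigma^\nu_\ell\|_{L^p}$ by a quantity $\epsilon(\ell)+o_\nu(1)$ uniformly in $t\le T$. For the coupled system the only new point is that each $\sigma_\pm$ is transported by its own velocity $\mathbb V_\pm$. Both velocities share the same log-Lipschitz bound $V_\infty$ and exponential integrability constant, so the argument goes through componentwise. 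Contributions outside $D_T$ are handled by \eqref{outbound} and by the support bound $B_T$.

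\emph{Step 3: conclusion.} Choose $\ell=\ell(\nu)\to0$ slowly enough that the Step 1 bound still vanishes, and let $\nu\to0$. This gives \eqref{concluth} first for $p=2$, and then for all $p\in[1,\infty)$ by interpolation with the uniform $L^\infty$ bound \eqref{initomegas} and the $L^1$ tail control coming from \eqref{outbound} on $\R^2\setminus D_T$.
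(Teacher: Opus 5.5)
\textbf{The gap is in Step 2.} Step 1 is fine: it is the standard smooth-data estimate, and the localized HLS bound together with \eqref{outbound} correctly handles the unscreened velocity. (One small correction: $\int\delta=\int(\tau^\nu-\tau)$ need not vanish, but HLS does not need it.)

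In your decomposition, $\sigma_\ell$ and $\sigma^\nu_\ell$ are the \emph{nonlinear} solutions launched from $\tau_\ell$ and $\tau^\nu_\ell$. A uniform-in-$\nu$ modulus of continuity of translations of $\sigma^\nu(t)$ controls $\|\sigma^\nu(t)-\rho_\ell*\sigma^\nu(t)\|_{L^p}$, the distance to the \emph{mollified solution}. It gives nothing about $\|\sigma^\nu(t)-\sigma^\nu_\ell(t)\|_{L^p}$, the distance to the \emph{solution with mollified data}. The latter is stability of the nonlinear solution map under an $L^p$-small change of data, uniformly in $\nu$, and that is the whole difficulty of the theorem.

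Neither natural route closes without a further ingredient:
\begin{itemize}
\item An energy estimate for $\sigma^\nu-\sigma^\nu_\ell$ produces the term $\V_\pm(\sigma^\nu-\sigma^\nu_\ell)\cdot\na\sigma^\nu_\ell$, whose coefficient blows up as $\ell\to0$.
\item A Lagrangian version needs stability of the (stochastic) flows under a perturbation of the velocity. That in turn requires a uniform-in-$\nu$ $L^2_{loc}$ stability estimate for $\V_\pm$ in the Yudovich class.
\end{itemize}
That velocity estimate is exactly what the paper has to work for: the $\chi_R$-localized energy estimate for $\na\psi$, with the $\log R$ bounds on $\psi$, the $\partial_t\psi$ terms, and the two-fluid term $Hv\cdot\na\psi$, which does not cancel as it does for Euler. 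Your proposal never supplies it, and the same objection applies to $\|\sigma_\ell-\sigma\|_{L^p}$. Relatedly, the coupling is not merely ``componentwise'': $\V_\pm$ depends on both $\sigma_+$ and $\sigma_-$, and this matters as soon as you compare two solutions.

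\textbf{How the paper avoids this.} It mollifies the data while \emph{freezing the velocities}: $\sigma_{\pm,l}$ is transported by $\V_\pm(\sigma)$ and $\sigma^{\nu}_{\pm,l}$ by $\V_\pm(\sigma^\nu)$. The outer terms then solve linear advection(-diffusion) equations with data $\tau-\tau_l$ and $\tau^\nu-\tau^\nu_l$. By $L^2$ contraction they are bounded by those data norms, uniformly in $\nu$. The middle term is reduced to the velocity convergence \eqref{eq:limVpm}, using the uniform $H^1$ bound of Lemma \ref{lem:H1RS} (built on Lemmas \ref{lem:expintRS} and \ref{lem2r2}) and the tail bound \eqref{outbound}. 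That convergence is proved on a short interval $[0,T_*]$ and iterated to $[0,T]$.

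\textbf{A possible alternative repair.} Your modulus-propagation claim looks plausible, since the stochastic flows are measure preserving and pathwise H\"older with exponent $e^{-Ct}$ uniformly in $\nu$. It could serve as a compactness device instead. Uniform equicontinuity of translations in $L^p$, tightness from \eqref{outbound}, and equicontinuity in time would give precompactness of $\{\sigma^\nu\}$ in $C([0,T];L^p)$. Any limit is then a Yudovich weak solution with data $\tau$, so uniqueness would yield \eqref{concluth}. That argument would make Step 1 and the mollified solutions unnecessary, but it is not the argument you wrote.
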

\beg{rem}
In view of Proposition \ref{prel}, the condition \eqref{outbound}  is verified if the viscous initial data are compactly supported in a ball of radius $L$.
\end{rem}

The proof follows the pattern of the proof in \cite{cde}, where the inviscid incompressible Euler limit was proved in the Yudovich class. Instead of scalar equations,  we are now considering  a system with two scalars. In addition, nontrivial adjustments need to be made because we are operating in the whole space, not on the torus. The velocity associated to a Yudovich $\sigma$, even if the latter is compactly supported, typically decays only like $\fr{1}{r}$, and thus fails to be in $L^2$. Also, a key element of \cite{cde} was  the global exponential integrability of gradients of velocity, which must be replaced here by local integrability. The strategy of proof is the same though. We prove a short time convergence on any time interval of order $c\Omega_{\infty}^{-1}$ if the initial data converge in $L^2$ and have the properties stemming from Proposition \ref{prel}. We obtain the long time limit by taking many steps (of the order $T\Omega_{\infty}$). If we start from the same initial data, the inviscid solution will be compactly supported in $B_T$ and the viscous solution will obey \eqref{outbound} and in particular it will be exponentially small outside the fixed larger set $D= D_{T}$, for all $t\le T$. We use these attributes  of the solutions throughout the proof. 
We prove the result for the right-handed system. The proof for the left handed system is the same.
\subsection{Proof for the right-handed system} In this section, we consider  the right-handed system given by \eqref{sigmaeq} and \eqref{mathbbV}, together with the viscous version 
\begin{equation}
    \label{eq:RSvisc}
    \begin{cases}
        (\pt  + \V_{+}(\sigma^{\nu}) \cdot \nabla  - \nu_+ \Delta) \sigma_+^{\nu_+} = 0,\\
        (\pt  + \V_{-}(\sigma^{\nu}) \cdot \nabla  - \nu_- \Delta) \sigma_-^{\nu_-} = 0.
    \end{cases}
\end{equation}
Above, $\sigma^{\nu} = (\sigma^{\nu_{+}}_{+}, \sigma^{\nu_{-}}_{-})$ for viscosities $\nu_{\pm} > 0$. The initial data is denoted $\tau^{\nu} = (\tau^{\nu_{+}}_{+}, \tau^{\nu_{-}}_{-})$. We  prove 
\begin{prop}\label{prop:mainRS}
    Let $\sigma, \sigma^{\nu}$ solve  right-handed systems \eqref{sigmaeq} and \eqref{eq:RSvisc}, respectively, with initial data $\tau, \tau^\nu \in \mathbb Y^2 $. Suppose that $\tau$ is compactly supported in the ball $B_T$ , assume both $\tau$ and $\tau^\nu$ obey \eqref{liftyib}   and that $\sigma^{\nu}$ obeys \eqref{outbound}.
    Then if
    $$
    \lim_{\nu_\pm \to 0} \|\tau_\pm^{\nu_\pm} - \tau_\pm\|_{L^2} = 0,
    $$
    there exists a universal constant $C_* > 0$ such that 
    $$
    \lim_{\nu_\pm \to 0} \sup_{t \in [0,T_*]} \|\sigma^{\nu_\pm}_\pm - \sigma_\pm\|_{L^2} = 0,
    $$
    where $T_* = (C_* \Omega_\infty)^{-1}$.
\end{prop}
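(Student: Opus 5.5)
We follow the strategy of \cite{cde}, adapted to two scalars in the whole space and localized to the fixed ball $D=D_T$. Set $w_\pm = \sigma^{\nu_\pm}_\pm - \sigma_\pm$. Subtracting \eqref{sigmaeq} from \eqref{eq:RSvisc} gives
\[
\pt w_\pm + \V_\pm(\sigma^\nu)\cdot\na w_\pm - \nu_\pm\D w_\pm = -\V_\pm(w)\cdot\na\sigma_\pm + \nu_\pm\D\sigma_\pm ,
\]
with $\V_\pm(w)=\mathbb U(\tfrac{w_+-w_-}{2})\pm\mathbb B(\tfrac{w_++w_-}{2})$. A direct $L^2$ estimate is not available, because $\na\sigma_\pm$ is not a function and $\nu\D\sigma_\pm$ is not controlled in the Yudovich class.

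We therefore regularize the inviscid solution, as in \cite{cde}. Let $\sigma^{\ell}_\pm$ be solutions of \eqref{sigmaeq} with mollified data $\tau_\pm*\rho_\ell$. We then split
\[
\|w_\pm\|_{L^2}\le \|\sigma^\nu_\pm-\sigma^\ell_\pm\|_{L^2}+\|\sigma^\ell_\pm-\sigma_\pm\|_{L^2}.
\]

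\emph{Inviscid stability.} The second term is handled by the Yudovich-class stability theory. We use the log-Lipschitz modulus of $\V_\pm$ together with the fact that both solutions are supported in $B_T$, which lets us work on a compact set. This gives $\|\sigma^\ell-\sigma\|_{L^2}\to 0$ as $\ell\to0$, uniformly on $[0,T_*]$. Alternatively one can use $L^1$ stability and interpolate with $L^\infty$.

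\emph{Viscous versus regularized.} For the first term, set $z_\pm=\sigma^\nu_\pm-\sigma^\ell_\pm$ and estimate
\[
\fr12\fr{d}{dt}\|z\|_{L^2}^2+\nu\|\na z\|_{L^2}^2 \le \Big|\sum_\pm\int \V_\pm(z)\cdot\na\sigma^\ell_\pm\, z_\pm\Big| + \nu\|\na\sigma^\ell\|_{L^2}\|\na z\|_{L^2}.
\]
The dissipative term yields $\nu\|\na\sigma^\ell(t)\|^2_{L^2}$. Propagation of the gradient along the flow gives $\|\na\sigma^\ell(t)\|_{L^\infty}\lesssim \ell^{-1}\exp(\cdots)$. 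More precisely, as in \cite{cde}, the flow of a log-Lipschitz field is Hölder with exponent $e^{-C\Omega_\infty t}$, so $\|\na\sigma^\ell(t)\|_{L^2}\lesssim \ell^{-1-\beta(t)}$ with $\beta(t)$ small for $t\le T_*$. Here $T_*=(C_*\Omega_\infty)^{-1}$ is chosen precisely so that this exponent stays below a fixed threshold.

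For the trilinear term, we integrate by parts to move the derivative onto $z_\pm$, or we write $\na\sigma^\ell$ in terms of the flow. Following \cite{cde}, we bound it by
\[
\int |\na\V_\pm(\sigma^\ell)|\,|z|^2 + \text{terms with } \V(z)\otimes z:\na\sigma^\ell .
\]
This is where the local exponential integrability of $\na\V_\pm(\sigma^\ell)$ on $D$ enters. It comes from the BMO/Calderón–Zygmund bound $\|\na\mathbb U\omega\|_{L^p(D)}\le Cp\,\Omega_\infty$, and $\mathbb B$ is smoother. Combined with $\|z\|_{L^\infty}\le 2\Omega_\infty$, this yields an Osgood-type inequality $y'\le C\Omega_\infty\, y\log(\Omega_\infty^2|D|/y)$ for $y=\|z\|^2_{L^2(D)}$, plus a forcing of size $\nu\ell^{-2-2\beta}$.

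Outside $D$, the regularized solution $\sigma^\ell$ vanishes (its support stays in $B_T\subset D$), while $\sigma^\nu$ is $O(e^{-LV_\infty/\nu})$ by \eqref{outbound}. So the exterior contributions to $\int\V(z)\cdot\na\sigma^\ell z$ and to $\|z\|_{L^2}$ are exponentially small. The nonlocal velocity $\V(z)$ also sees the exterior part of $z$, but this contributes at most $C\|z\|_{L^1\cap L^\infty(\Rr^2\setminus D)}$, which is again exponentially small.

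\emph{Conclusion.} The Osgood inequality gives
\[
y(t)\le C\big(y(0)+\nu\ell^{-2-2\beta}+e^{-c/\nu}\big)^{e^{-C\Omega_\infty t}}.
\]
We choose $\ell=\ell(\nu)\to0$ slowly, so that $\nu\ell^{-2-2\beta}\to0$. Since $y(0)\le 2\|\tau^\nu-\tau\|^2+2\|\tau-\tau*\rho_\ell\|^2\to0$, we conclude that $\sup_{[0,T_*]}y(t)\to0$.

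\textbf{Main obstacle.} The hardest step is the trilinear term $\int\V_\pm(z)\cdot\na\sigma^\ell_\pm z_\pm$. It requires the localized exponential integrability of $\na\V(\sigma^\ell)$ and control of the nonlocal tail of $\V(z)$, which in the whole space decays only like $1/r$ and is not in $L^2$. To keep all pairings local, I would first try writing the term as $\int (z\cdot)\,\na\V(\sigma^\ell)\,(\cdot)$ after integration by parts, using that $\V$ is divergence free and that $\sigma^\ell$ is supported in $D$.
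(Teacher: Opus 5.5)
There is a genuine gap, and it sits exactly at the step you flag as the main obstacle. You claim the bound
\[
\Big|\sum_\pm\int \V_\pm(z)\cdot\na\sigma^\ell_\pm\, z_\pm\,dx\Big|\le C\Omega_\infty\, y\log\big(\Omega_\infty^2|D|/y\big),
\]
with $C$ independent of $\ell$. This is not available, and in fact it cannot hold.

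\begin{itemize}
\item The integrand contains $\na\sigma^\ell$, not $\na\V(\sigma^\ell)$. Any rewriting by integration by parts leaves derivatives either on $\sigma^\ell$, which costs powers of $\ell^{-1}$, or on $z$, which is uncontrolled since $\nu\|\na z\|_{L^2}^2$ degenerates.
\item The structure $\int\na\psi\cdot\na V\,\na\psi$ is where exponential integrability and the Osgood/Yudovich argument apply. It appears only one derivative lower, in the energy estimate for the stream function of the difference.
\item If your inequality held uniformly in $\ell$, then taking $\nu=0$ it would give a uniform modulus of continuity in $L^2$ for the inviscid solution map on $L^\infty$-bounded sets. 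The right-handed system contains 2D Euler (take $\sigma_-=-\sigma_+$), and for Euler this is false. A perturbation $\epsilon\phi(r)\cos(N\theta)$ rotated by the radial profiles $f$ and $(1+\delta)f$ decorrelates in $L^2$ by times of order $(N\delta)^{-1}$, while the data differ only by $O(\delta)$.
\end{itemize}

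What you can honestly get is $\lesssim\|\na\sigma^\ell\|_{L^\infty}\|\V(z)\|_{L^2(D)}\|z\|_{L^2}\lesssim\ell^{-1-\beta}y$, up to exponentially small tails. The resulting Gr\"onwall factor $e^{C\ell^{-1-\beta}t}$ cannot be beaten by $y(0)\ge\|\tau-\tau*\rho_\ell\|_{L^2}^2$, which has no rate for general Yudovich data. Inserting the Yudovich velocity estimate does not rescue this either. For general data it gives only $\|\V(z)(t)\|_{L^2(D)}\lesssim\ell^{1-\kappa(t)}$ with $\kappa(t)>0$, which still loses against $\ell^{-1-\beta(t)}$. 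So no choice of $\ell(\nu)$ closes the argument.

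The missing device is the one the paper takes from \cite{cde}: mollify only the data and keep the true velocities.

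\begin{itemize}
\item The paper's $\sigma_l$ and $\sigma_l^\nu$ solve the \emph{linear} problems \eqref{eq:RSl} and \eqref{eq:RSviscl}, transported by $\V_\pm(\sigma)$ and $\V_\pm(\sigma^\nu)$.
\item The differences $\sigma_\pm-\sigma_{\pm,l}$ and $\sigma^{\nu_\pm}_\pm-\sigma^{\nu_\pm}_{\pm,l}$ solve the same linear equations. So their $L^2$ norms are bounded by $\|\tau_\pm-\tau_{\pm,l}\|_{L^2}$ and $\|\tau^{\nu_\pm}_\pm-\tau^{\nu_\pm}_{\pm,l}\|_{L^2}$, uniformly in $\nu$.
\item The remaining trilinear term becomes $\int(\V_\pm(\sigma^\nu)-\V_\pm(\sigma))\cdot\na\sigma^{\nu_\pm}_{\pm,l}\,(\sigma^{\nu_\pm}_{\pm,l}-\sigma_{\pm,l})\,dx$.
\item Here $\|\na\sigma^{\nu_\pm}_{\pm,l}\|_{L^2}\le C_l$ uniformly in $\nu$ on $[0,T_*]$, by Lemma \ref{lem:H1RS}. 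This comes from the $L^{p(t)}$ argument of Lemma \ref{lem2r2}, which is where exponential integrability of $\na\V(\sigma^\nu)$ is really used.
\item Meanwhile $\V(\sigma^\nu)-\V(\sigma)\to0$ in $L^\infty(0,T_*;L^2(D))$, independently of $l$, by the localized Yudovich estimate for $\int\chi_R|\na\psi|^2dx$.
\end{itemize}

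One then lets $\nu\to0$ at fixed $l$, and only afterwards $l\to0$. This separation of the two limits is exactly what your nonlinear regularization $\sigma^\ell$ destroys. Your inviscid-stability step is fine, but it does not address this issue.
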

 Proposition \ref{prop:mainRS} implies a more general version of Theorem \ref{thm:main} where $\nu$ is replaced by  possibly distinct $\nu_{\pm}$.  To prove Proposition \ref{prop:mainRS}, we consider the transport of mollified initial data.  We fix a scale $l > 0$ and consider $\sigma_l = (\sigma_{+,l}, \sigma_{-,l})$ which satisfies the system

\begin{equation}
    \label{eq:RSl}
    \begin{cases}
        (\pt  + \V_+(\sigma) \cdot \nabla) \sigma_{+,l} = 0,\\
        (\pt  + \V_-(\sigma) \cdot \nabla) \sigma_{-,l} = 0,
    \end{cases}
\end{equation}
equipped with initial data $\tau_l := \varphi_l * \tau$, where $\varphi_l$ is the standard mollifier at scale $l$; moreover, consider $\sigma_l^\nu = (\sigma_{+,l}^{\nu_+}, \sigma_{-,l}^{\nu_-})$ which satisfies the system
\begin{equation}
    \label{eq:RSviscl}
    \begin{cases}
        (\pt  + \V_+(\sigma^{\nu}) \cdot \nabla  - \nu_+ \Delta) \sigma_{+,l}^{\nu_+} = 0,\\
        (\pt  + \V_-(\sigma^{\nu}) \cdot \nabla  - \nu_- \Delta) \sigma_{-,l}^{\nu_-} = 0,
    \end{cases}
\end{equation}
equipped with data $\tau_l^\nu := \varphi_l * \tau^\nu$.  The transporting velocities are fixed; only the initial data have been mollified.


 We decompose
$$
\|\sigma_\pm^{\nu_\pm} - \sigma_\pm\|_{L^2} \le \|\sigma_\pm^{\nu_\pm} - \sigma_{\pm,l}^{\nu_\pm}\|_{L^2} + \|\sigma_{\pm,l}^{\nu_\pm} - \sigma_{\pm,l}\|_{L^2}  + \|\sigma_{\pm,l} - \sigma_\pm\|_{L^2}.
$$
Because the transporting velocities are the same, the first and the third term above converge to $0$ as we send $l$ to zero, uniformly in $ \nu_\pm$. It remains  to establish that
\begin{equation}\label{linconvergence}
\lim_{\nu_\pm \to 0} \sup_{t \in [0,T_*]} \|\sigma_{\pm,l}^{\nu_\pm} - \sigma_{\pm,l}\|_{L^2} = 0
\end{equation}
holds for a fixed $\ell$. We note that the inviscid initial data are smooth and compactly supported and that the viscous solution obeys \eqref{outbound} (by the same proof).
As a preliminary, we adapt two technical lemmas from \cite{cde}. The first one concerns the exponential integrability for the velocity:
\begin{lemma}
    \label{lem:expintRS}
    Let $\sigma \in \mathbb Y^2$ with
     \begin{equation*}
        \label{est:expintcond}
        \|\sigma\|_{L^{\infty}} \le \Omega_{\infty}.
    \end{equation*}
   Assume that  there exists a compact $D \subset \subset \R^2$ such that 
    \be
    \|\sigma\|_{L^q(\Rr^2\setminus D)} \le C\Omega_{\infty}
    |D|^{\fr{1}{q}} 
    \la{cond}
    \ee 
   holds for all $q\ge 1$. Then, there exist absolute non-dimensional constants $\gamma > 0$ and $C>0$  such that, for any measurable set $A$ we have
    \begin{equation}
        \label{est:expint}
        \int_{A} \exp\left(\fr{\gamma |\na \V_{\pm}(\sigma)(x)|}{\Omega_{\infty}} \right) dx \le C (|A| +|D|).
   \end{equation}
   \end{lemma}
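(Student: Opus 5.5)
We reduce the estimate to $L^q$ bounds on $\na\V_\pm(\sigma)$ with the sharp linear growth in $q$, and then sum the Taylor series of the exponential. Since $\V_\pm(\sigma) = \U(\omega)\pm\mathbb B(F)$ with $\omega = \fr12(\sigma_+-\sigma_-)$ and $F=\fr12(\sigma_++\sigma_-)$, we have
\[
\na\V_\pm(\sigma) = \na\na^\perp\D^{-1}\omega \pm \na\na^\perp(1-\D)^{-1}F .
\]
The first operator is a matrix of Riesz transform products $R_iR_j$, hence a Calder\'on--Zygmund operator. The second is bounded on every $L^q$, $1\le q\le\infty$, by Young's inequality: its symbol $\xi_i\xi_j(1+|\xi|^2)^{-1}$ equals $R_iR_j$ minus a Bessel-potential convolution with an $L^1$ kernel. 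So the screened part is harmless, and it suffices to control $\int_A \exp(\gamma|R_iR_j f|/\Omega_\infty)$ for $f\in\{\sigma_+,\sigma_-\}$, plus an $L^\infty$-bounded piece of size $C\Omega_\infty$.

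\textbf{Step 1: $L^q$ bounds.} For $2\le q<\infty$ the Calder\'on--Zygmund bound gives $\|R_iR_jf\|_{L^q}\le Cq\|f\|_{L^q}$ with $C$ absolute. Splitting $\|f\|_{L^q}\le\|f\|_{L^q(D)}+\|f\|_{L^q(\Rr^2\setminus D)}$ and using $\|f\|_{L^\infty}\le\Omega_\infty$ together with hypothesis \eqref{cond}, we get
\[
\|\na\V_\pm(\sigma)\|_{L^q(\Rr^2)}\le C q\,\Omega_\infty |D|^{1/q}\qquad (q\ge 2).
\]
Adding the bounded piece changes nothing, because on $A$ we have $\|C\Omega_\infty\|_{L^q(A)}^q\le (C\Omega_\infty)^q|A|$.

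\textbf{Step 2: summing the series.} We write
\[
e^{s}=\sum_{k} \fr{s^k}{k!}
\]
with $s=\gamma|\na\V_\pm|/\Omega_\infty$. The term $k=0$ gives $|A|$. For $k\ge2$ we use Step 1 with $q=k$:
\[
\int_A \fr{s^k}{k!}\le \fr{\gamma^k (Ck)^k}{k!}(|D|+|A|).
\]
Since $k^k/k!\le e^k$, choosing $\gamma<1/(2Ce)$ makes the series geometric, summable with an absolute constant. The term $k=1$ is bounded by Hölder from the $k=2$ term, since $\int_A s\le \int_A(1+s^2)$.

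\textbf{Main obstacle.} The delicate point is ensuring the constants are absolute and that only $|A|+|D|$ appears. In particular, we must not use the $L^1$ norm of $\sigma$, which is not controlled by $|D|$; we bypass it by using only $q\ge2$. We also need the linear-in-$q$ constant for $R_iR_j$, which follows from Marcinkiewicz interpolation between weak $(1,1)$ and $L^2$ together with duality.
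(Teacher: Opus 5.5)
Your skeleton matches the paper's proof: linear-in-$q$ Calder\'on--Zygmund bounds on $\Rr^2$, expansion of the exponential, $|A|$ entering through the $k=0$ term, the $k=1$ term interpolated, and $\gamma$ chosen small via $k^k/k!\le e^k$. The step that fails is your treatment of the screened part.

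First, $\na\na^\perp(1-\D)^{-1}$ is not bounded on $L^1$ or $L^\infty$. Its kernel $\frac{1}{2\pi}\na\na^\perp K_0(|x|)$ has exactly the Riesz singularity $|x|^{-2}$ at the origin.

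Second, the remainder left after subtracting the Riesz part does not have an $L^1$ kernel. This remainder is $P=\na\na^\perp\D^{-1}(1-\D)^{-1}=\na\bbS$. Its symbol $\xi\otimes\xi^\perp|\xi|^{-2}(1+|\xi|^2)^{-1}$ is discontinuous at $\xi=0$, so it cannot be the Fourier transform of an $L^1$ function. Concretely, the kernel is $\frac{1}{2\pi}\na\na^\perp G(|x|)$ with $G=K_0+\log$ as in Section \ref{sec:patc}. It is only logarithmically singular at $0$, but it has the tail of $\na\na^\perp\log|x|$, of size $|x|^{-2}$.

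Here is a counterexample to the bound you need. Let $f$ be the sign of one entry of that kernel on the annulus $\{1<|x|<R\}$, extended by zero. Then $\|f\|_{L^\infty}=1$, and $f$ satisfies \eqref{cond} with $D$ the disk of radius $R$, so $|D|\sim R^2$. Yet $|Pf(0)|\ge c\log R-C$. Hence there is no bound $\|P\sigma\|_{L^\infty}\le C\Omega_\infty$. The correct bound is $C\Omega_\infty(1+\log(1+|D|))$, obtained using the $L^1$ control from \eqref{cond}, and the example shows the logarithm is genuine. Inserting this into your estimate on $A$ gives a contribution $e^{C\gamma}(1+|D|)^{C\gamma}|A|$. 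That is not $\le C(|A|+|D|)$ with absolute constants, so your claim that ``adding the bounded piece changes nothing'' is unjustified.

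The repair is not to split at all, or to bound $P$ in $L^q(\Rr^2)$ rather than in $L^\infty$. The symbol $\xi_a\xi^\perp_b(1+|\xi|^2)^{-1}$ satisfies the Mikhlin condition $|\pa^\beta m(\xi)|\le C_\beta|\xi|^{-|\beta|}$ with absolute constants. Equivalently, its kernel is a standard Calder\'on--Zygmund kernel with $L^2$ operator norm at most $1$. So $\na\mathbb B$ obeys the same bound $\|\cdot\|_{L^q\to L^q}\le Cq$ for $q\ge 2$ as $R_iR_j$.

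This is what the paper means by applying the Calder\'on--Zygmund $L^p$ bounds directly to $\na\V_\pm$. It gives $\|\na\V_\pm(\sigma)\|_{L^q(\Rr^2)}\le Cq\,\Omega_\infty|D|^{1/q}$ for all $q\ge2$. Then $|A|$ enters only through $k=0$ and $k=1$, and your Step 2 goes through verbatim.

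A smaller point: contrary to your ``main obstacle'' remark, the $L^1$ norm of $\sigma$ is controlled. Condition \eqref{cond} with $q=1$, together with $\|\sigma\|_{L^\infty}\le\Omega_\infty$, gives $\|\sigma\|_{L^1}\le C\Omega_\infty|D|$; the paper uses this for all $p\ge1$. The real reason to skip $k=1$ is that Calder\'on--Zygmund operators are unbounded on $L^1$. Your device $s\le 1+s^2$ is exactly the paper's interpolation and is fine.
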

\beg{rem} Note that the condition \eqref{cond} is satisfied by solutions of the inviscid equations with $D=B_T$ and solutions of viscous equations with $D= D_{T}$ in view of \eqref{outbound}.
\end{rem}
\beg{proof} The proof follows from the observation that the condition \eqref{cond} implies that $\|\sigma_{\pm}\|_{L^p} \le C\Omega_{\infty}|D|^{\fr{1}{p}}$ holds for $p\ge 1$. Then, expanding the exponential in power series and applying the Calderón--Zygmund $L^p$ bounds, and then  choosing  $\gamma$ small enough like in \cite{cde} concludes the proof. The zeroth order term introduces the dependence on $|A|$. The first order term needs to be interpolated between the zeroth order and the second order, because the CZ estimate fails there, but this changes only the prefactor in the right hand side of \eqref{est:expint}.   
\end{proof}
The second technical lemma is an adaptation of Lemma 2  in \cite{cde} for  $\R^2$. 
    \beg{lemma}\la{lem2r2}
    Let $u(x,t)$ be divergence-free in $\Rr^2$ and assume that $\omega =\na^{\perp}\cdot u\in \mathbb Y$ obeys $\|\omega\|_{L^{\infty}}\le \Omega_{\infty}$ and
    $\|\omega\|_{L^2} \le \Omega_2$. We also assume that there exists a set $D$ such that
    \be
    \|\omega\|_{L^q(\Rr^2\setminus D)} \le C\Omega_{\infty}
    |D|^{\fr{1}{q}}
    \la{condomega}
    \ee
    holds for all $q\ge1$. 
     Let  a non-negative $\theta$ solve 
    \be
    \pa_t\theta + u\cdot\na\theta -\nu\Delta\theta \le |\na u|\theta.
    \la{thetaineq}
    \ee
    Then for $T(p) = \fr{\gamma(p-1)}{2p\Omega_{\infty}} $ and any $p>1$, there exist constants $C_1$ and $C_2$ depending only on $\Omega_{\infty}$, $\Omega_2$ and $|D|$  such that
    \be
    \|\theta (t)\|_{L^2}^2 \le C_1\|\theta(0)\|_{L^{2p}}^{2p} +C_2
    \la{thetalp}
    \ee
    holds for $0\le t\le T(p)$. 
     \end{lemma}
    \beg{proof}
    We take a smooth, decreasing $p(t)\ge 1$ as in the proof of Lemma 2 in \cite{cde} and take the evolution of $\int \theta^{2p(t)}dx$. We have
\begin{equation}
    \begin{aligned}
    \fr{1}{2}\fr{d}{dt}\int \theta^{2p(t)}dx &= p'(t)\int (\log \theta )\theta^{2p(t)} dx  + p(t) \int \theta^{2p-1}\partial_t \theta \,dx\\
    &\le p'(t)\int (\log\theta ) \theta^{2p(t)} dx + p(t)\int\theta^{2p(t)}|\na u| \, dx.
    \la{pp}
    \end{aligned}
\end{equation}
    Now we take a large constant $M$ and consider the set $A = \{x \mid  |\na u|\ge M\}$. We have that $|A|\le \fr {\|\na u\|_{L^2}^2}{M^2}$. 
    On $A$ we use the inequality $ab \le e^a + b\ln b $ with $a=\gamma\fr {|\na u(x)|}{\Omega_{\infty}}$ and $b = \fr{\Omega_{\infty}}{\gamma}\theta^{2p(t)}$
    and Lemma \ref{lem:expintRS}. We obtain from \eqref{pp} and \eqref{est:expint}
   \begin{equation}
       \begin{aligned}
    \fr{1}{2}\fr{d}{dt}\int \theta^{2p(t)}dx\le &(p'(t) + \fr{2\Omega_\infty}{\gamma}p(t)^2 )\int (\log\theta) \theta^{2p(t)}dx\\
    &+ \left[p(t)\left(M + C(|D| + C\fr{\Omega_2^2}{M^2})\right) + \fr{\Omega_{\infty}}{\gamma}\log\left(\fr{\Omega_{\infty}}{\gamma}\right)\right]\int\theta^{2p(t)}\,dx.
        \end{aligned}
        \la{intpt}
   \end{equation}
    We choose $p(t)$ to solve $p'(t) + \fr{2\Omega_\infty}{\gamma}p(t)^2 =0$ and conclude as in the proof of Lemma 2 of \cite{cde}.
        \end{proof}
We apply this lemma to deduce an uniform in viscosity $H^1$ bound for the solutions of the transport of mollified initial data:

   \begin{lemma}\label{lem:H1RS}
    Let $l > 0$ be fixed. There exists a universal constant $C_* > 0$ and $C_l > 0$ depending only on $l$ and $\|\tau\|_{L^p}$, $p>2$ such that
    \be
    \sup_{t \in [0,T_*]} (\|\sigma_{\pm,l}^{\nu_\pm}\|_{H^1} + \|\sigma_{\pm,l}\|_{H^1}) \le  C_l,
    \la{h1lbound}
    \ee
    for $T_* = (C_* \Omega_\infty)^{-1}$.
\end{lemma}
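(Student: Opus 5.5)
The plan is to obtain the $H^1$ bound from the $L^2$ bound (conserved or dissipated) plus a bound on $\|\na\sigma_{\pm,l}^{\nu_\pm}\|_{L^2}$, derived from Lemma \ref{lem2r2} applied to $\theta = |\na \sigma_{\pm,l}^{\nu_\pm}|$. The $L^2$ part is immediate. Equations \eqref{eq:RSl} and \eqref{eq:RSviscl} are linear transport (respectively transport--diffusion) equations with divergence-free velocities, so $\|\sigma_{\pm,l}^{\nu_\pm}(t)\|_{L^2}\le \|\varphi_l*\tau^\nu_\pm\|_{L^2}\le \|\tau^\nu_\pm\|_{L^2}$, and similarly in the inviscid case. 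These are bounded in terms of $\Omega_\infty$ and the $L^1$ norms.

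For the gradient, we differentiate \eqref{eq:RSviscl}. Writing $g=\na\sigma_{+,l}^{\nu_+}$ and $u=\V_+(\sigma^\nu)$, we get
\[
(\pt + u\cdot\na - \nu_+\Delta) g = -(\na u)^{T} g .
\]
Taking the inner product with $g/|g|$ (formally, or with $\sqrt{|g|^2+\epsilon^2}$ and then letting $\epsilon\to0$) and using $\Delta|g|\ge \fr{g\cdot\Delta g}{|g|}$, we find that $\theta=|g|$ satisfies \eqref{thetaineq}, namely $\pt\theta+u\cdot\na\theta-\nu_+\Delta\theta\le|\na u|\theta$. The same holds for the $-$ component and for the inviscid solution with $\nu=0$. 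The hypotheses of Lemma \ref{lem2r2} concern the velocity $u=\V_\pm(\sigma^\nu)$, whose curl is a combination of $\sigma^\nu_\pm$ and of $(1-\Delta)^{-1}$-type terms. Its gradient, however, is controlled exactly as in Lemma \ref{lem:expintRS}, and that is all the proof of Lemma \ref{lem2r2} actually uses: the exponential integrability \eqref{est:expint} together with $\|\na u\|_{L^2}\lesssim \|\sigma^\nu\|_{L^2}\le\Omega_2$. The assumption \eqref{cond} holds with $D=D_T$ for the viscous solution by \eqref{outbound}, and with $D=B_T$ for the inviscid one. The $L^\infty$ bound by $\Omega_\infty$ comes from the maximum principle.

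We then fix $p=2$, say, and set $T_*=T(2)=\fr{\gamma}{4\Omega_\infty}$, that is, $C_*=4/\gamma$, which is universal. Lemma \ref{lem2r2} gives
\[
\|\na\sigma_{\pm,l}^{\nu_\pm}(t)\|_{L^2}^2\le C_1\|\na(\varphi_l*\tau^\nu_\pm)\|_{L^4}^4+C_2
\]
for $t\le T_*$. Young's inequality bounds the right-hand side by $C_1\|\na\varphi_l\|_{L^1}^4\|\tau^\nu_\pm\|_{L^4}^4+C_2\le C_1 C l^{-4}\|\tau^\nu\|_{L^4}^4+C_2$. Uniformity in $\nu$ follows because $\|\tau^\nu\|_{L^4}$ is bounded via \eqref{initomegas} and the $L^1$ bound (or via $\tau^\nu\to\tau$ in $L^p$). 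The same estimate holds for $\sigma_{\pm,l}$. This yields \eqref{h1lbound}, with $C_l$ depending on $l$, on $\|\tau\|_{L^4}$, and through $C_1,C_2$ on $\Omega_\infty,\Omega_2,|D|$.

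The main obstacle is justifying the differential inequality \eqref{thetaineq} for $\theta=|\na\sigma_l|$ rigorously. In the inviscid case $\sigma_{\pm,l}$ is only transported by a log-Lipschitz velocity, so its gradient is not obviously smooth. I would first run the argument on the viscous problem, or on a regularized velocity, where all quantities are smooth and decaying, and then pass to the limit using lower semicontinuity of the $H^1$ norm. A second point to check is that the constants $C_1,C_2$ coming from Lemma \ref{lem2r2} depend on $\nu$ only through $|D|=|D_T|$, which is $\nu$-independent.
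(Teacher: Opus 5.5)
Your proposal is correct and follows the paper's proof. Like the paper, you differentiate the mollified transport--diffusion equation, derive the inequality \eqref{thetaineq} for $\theta=|\na\sigma_{\pm,l}^{\nu_\pm}|$, and apply Lemma \ref{lem2r2} using that the mollified initial gradient is bounded in $L^{2p}$, $2p>2$, at a cost in $l$. Your explicit choice $p=2$, $T_*=T(2)$ with $\|\na(\varphi_l*\tau)\|_{L^4}\le Cl^{-1}\|\tau\|_{L^4}$ only makes this concrete, and your remarks on the $L^2$ part, on the dependence of $C_l$ on $\Omega_\infty,\Omega_2,|D|$, and on justifying the inequality in the inviscid case by regularization fill in details the paper leaves implicit.
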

\beg{proof}
     We show the proof of the bound for $\sigma_{+,l}^{\nu_+}$.    Differentiating the first equation of \eqref{eq:RSviscl}, we have
 \begin{equation}
    \pt \nabla \sigma_{+,l}^{\nu_{+}} - \V(\sigma^{\nu}) \cdot \nabla (\nabla \sigma_{+,l}^{\nu_{+}}) - \nu_+ \Delta \nabla \sigma_{+,l}^{\nu_{+}} = \nabla \V(\sigma^{\nu}) \cdot \nabla \sigma_{+,l}^{\nu_{+}},
    \end{equation}
    which implies
    \begin{equation}
    (\pt - \V(\sigma^{\nu}) \cdot \nabla - \nu_+\Delta)|\nabla \sigma_{+,l}^{\nu_{+}}| \le |\nabla \V(\sigma^{\nu})||\nabla\sigma_{+,l}^{\nu_{+}}|.
    \end{equation}
Then using Lemma \ref{lem2r2} with $\theta = |\nabla \sigma_{+,l}^{\nu_{+}}| $ and taking advantage of the fact that the initial data are bounded in $L^p$ with $p>2$ (paying in $\ell$) we conclude the proof.
\end{proof}
In order to prove \eqref{linconvergence}, we  note
\begin{equation}
\begin{aligned}
    &\frac12 \frac{d}{dt} \|\sigma_{\pm,l}^{\nu_\pm} - \sigma_{\pm,l}\|_{L^2}^2 \\
    &= - \int \left(\V_\pm(\sigma^\nu) - \V_\pm(\sigma)\right) \cdot \na \sigma_{\pm,l}^{\nu_\pm} (\sigma_{\pm,l}^{\nu_\pm} - \sigma_{\pm,l}) \, dx  -\nu_\pm \|\na \sigma_{\pm,l}^{\nu_\pm} - \na \sigma_{\pm,l}\|_{L^2}^2  \\
    &\quad - \nu_\pm \int  \nabla\sigma_{\pm,l} \cdot\nabla (\sigma_{\pm,l}^{\nu_\pm} - \sigma_{\pm,l}) \, dx \\
    &\le C\Omega_\infty C_l\|\V_\pm(\sigma^\nu) - \V_\pm(\sigma)\|_{L^\infty(0,T_*;L^2(D))} + \nu_\pm C_l^2 + C_\ell \|\tau\|_{\mathbb Y}^2e^{-\frac{LV_\infty}{\nu}}.
\end{aligned}
\end{equation}
Indeed, the portion outside $D$ of the integral involving $\V_\pm(\sigma^\nu) - \V_\pm(\sigma)$ is bounded by an $L^4-L^2-L^4$ H\"{o}lder bound, using the boundedness of the velocities in $L^4$, the bounds \eqref{h1lbound} and \eqref{outbound} with $p=4$.  We used Lemma \ref{lem:H1RS} for the last term in the equality.
In view of the differential inequality above, to establish the limit \eqref{linconvergence} and thus Proposition \ref{prop:mainRS}, it suffices to show that 
\begin{equation}
\label{eq:limVpm}
\lim_{\nu_\pm \to 0}\|\V_\pm(\sigma^\nu) - \V_\pm(\sigma)\|_{L^\infty(0,T_*;L^2(D))} = 0.
\end{equation}

 In other words, it suffices to prove an analogue of Lemma 4 in \cite{cde} for $\V_\pm(\sigma)$.  We denote for simplicity of exposition the difference as $ h  = \sigma_{\pm}^\nu -\sigma_{\pm}$ and the semisum as $H = \fr{1}{2} (\sigma_{\pm}^\nu + \sigma_{\pm})$. 
 Let $\psi, \Psi $ and $\phi, \Phi$  satisfy 
 \begin{equation}
 \Delta \psi = h, \qquad \Delta \Psi = H, \qquad  -\Delta \phi + \phi = h, \qquad -\Delta \Phi + \Phi = H.
 \end{equation}
In view of the definition of $\V_\pm$, we rewrite it as
\begin{equation}
    \label{Vpm2}
    \V_\pm(h) = \frac12\nabla^\perp(\psi  \pm \phi ).
\end{equation}
From \eqref{Vpm2}, we see that \eqref{eq:limVpm} amounts to controlling $\|\na \psi\|_{L^2(D)}$ and $\|\na \phi\|_{L^2(D)}$. We show how to control $\na \psi$, the bound of $\na\phi$ is similar.

 The equation for $h$ is
 \be
 \pa_t  h  + V\cdot\na h  + v\cdot\na  H  = \nu\Delta h  + \nu\Delta  H 
 \la{diffeq}
 \ee
 where  $v = \mathbb V_{\pm} (\sigma_{\pm}^\nu -\sigma_{\pm})  = \V_{\pm} h$ and $V=\fr{1}{2}\mathbb V_{\pm}(\sigma_{\pm} +\sigma_{\pm}^\nu) = \V_{\pm} H$. 
 Consider a smooth cutoff $\chi$ equal to 1 on a neighborhood of $D_{T}$ and compactly supported. More precisely, we consider $\chi_R :=\chi (\fr{|x|}{R})$ with $R$ sufficiently large and time independent, whose precise value will be made explicit below. For the ease of reading, we  denote $\chi_R$ by $\chi$.

We establish first some properties of $V, v,  h $ and $ H $.
For $ h $ and $ H$, we have Yudovich  bounds (time independent) and bounds which decay exponentially 
\be
\| e^{|\cdot |\fr{V_{\infty}}{\nu} t}[| h (\cdot,t)| + | H (\cdot,t)|]\|_{L^q}  \le C_q
\ee
for all $1\le q<\infty$. Then with the above bounds, we show that
\be
|\psi (x,t)| + |\Psi (x,t)| \le C \log R,
\la{psibound}
\ee
and
\be
|\na\psi (x,t)| + |\na\Psi (x,t)| \le C R^{-1}
\la{napsibound}
\ee
uniformly for $x\in \supp  \chi$.
Indeed, to prove \eqref{psibound} it suffices to obtain the following estimate:
\begin{equation}
\int \left|\log|x-y|\right| | H (y,t)|dy \le C\log R
\end{equation}
for $|x| \le R $. For $|x-y| \le 1$ we split $\log||x-y||\sqrt{ | H (y)|}\sqrt{| H  (y)|}$ and use $L^2-L^2$
to obtain $C\sqrt{\| H \|_{L^{\infty}}}$ from the first integral
 because $\log|z|$ is integrable on $|z|\le 1$,
and $\sqrt{ \|H \|_{L^1}}$ for the second integral. For the case $|x-y| \ge 1$ and $|y| \le R$, we use $L^1$ for $ H $ and that $|\log |x-y|| < \log R$ for $1 <|x-y| \le 2R$.
On the other hand for $|y|\ge R$ we use the fact that $|\log |y| H |$ is square integrable because of the rapid decay of $ H $.

To prove \eqref{napsibound}, it suffices to show that for $|x| \le R$,
\begin{equation}
    \left|\int \frac{x - y}{|x-y|^2} H(y,t) dy\right| \le C R^{-1}.
\end{equation}
If $|x-y| > \frac R2$, we bound $H$ by $L^1$ and the kernel by $CR^{-1}$. If $|x-y| \le \frac R2$, we estimate the kernel in $L^1$ and use the exponential decay of $H$.

We also need an estimate for $\Delta^{-1} \partial_t h$. In conservative form we have
\begin{equation}
   \partial_t h = \div ( -Vh - vH + \nu \nabla h + \nu \nabla H)
\end{equation} 
and use  bounds on Riesz potentials,
\[
\|R\Lambda^{-1} (u H  + U h )\|_{L^r} \le C\|u H  + U h \|_{L^q}
\]
with $\fr{1}{r} = \fr{1}{q} -\fr{1}{2}$ with $q<2$. We obtain that 
\be
\|\partial_t \psi\|_{L^r} \le  K^2  + \nu K
\la{psit}
\ee
with $r>2$ and  $K$ the Yudovich norm.

 We take \eqref{diffeq} and multiply by $- \chi \psi$. After  integrating by parts, we have
 \be
\begin{aligned}
 \fr{1}{2} \fr{d}{dt}\int& (\chi |\na\psi |^2) dx = \int (\na\psi \cdot \na V\na\psi +  H  v\cdot\na\psi)\chi dx \\
 &+ \int s(\nu)\cdot\na\chi dx
  - \nu\int ( h  +  H )\Delta(\chi\psi)dx + \int \partial_t\psi \nabla\cdot(\psi\nabla \chi)dx
  \end{aligned}
 \la{intonce}
 \ee
In the case of a single fluid $v\cdot\na\psi =0$, but for two fluids the cancellation does not hold. Nevertheless, the term has  a bounded multiplying factor $ H $. More care is required for the term with the vector $s(\nu)$
 \be
 s(\nu) = \psi( H  u -  h  U)  -\fr{1}{2} |\na\psi|^2 U  + (V\cdot\na\psi)\na\psi.
 \la{snu}
 \ee
 The first piece is bounded  in $L^1$ by $\log R$ times norms in $\|u H  - U h \|_{L^1}$.
 For the second and third piece, we bound $|\na \psi|^2 U$ in $L^\infty$ by $KR^{-2}$ after using \eqref{napsibound}, and bound $\na \chi$ in $L^1$ by $CR$.
 Therefore
 \be
 \left |\int s(\nu)\cdot\na\chi dx \right | \le K(1+\log R)R^{-1}
 \la{snusmall}
 \ee
 where $K$ depends on norms in $\mathbb Y$ of both $\sigma$ and $\sigma^{\nu}$ which are uniformly bounded and constant in time.
 The terms from the Laplacian when $\chi$ is  not differentiated are  $O(\nu)$.
 Then, 
 \begin{equation}
     \int \partial_t \psi\nabla \cdot (\psi\nabla \chi)dx = \int \partial_t  \psi \psi \Delta \chi \, dx + \int \partial_t \psi \na\psi \cdot  \na\chi \, dx.
 \end{equation}   
 gives two terms. We use \eqref{psibound} and 
$\partial_t\psi$ in $L^4$ with \eqref{psit} for the term $\partial_t  \psi \psi \Delta \chi$  and have  $\log R\|\Delta\chi\|_{L^{\fr{4}{3}}} = (\log R) R^{-\fr{1}{2}}$. In the term $\partial_t \psi \na\psi \cdot  \na\chi$ we use $L^4$ for $\na\chi$ and say $L^3$ for $\na\psi$ leaving $L^{\fr{12}{5}}$ for $\partial_t \psi$. 
 This term is thus bound by
 \be
\left  |\int \partial_t\psi \nabla\cdot (\psi\nabla \chi)dx\right | \le K (1+ \log R)R^{-\fr{1}{2}}
\la{psitterm}
\ee

Now we estimate the term with $\nabla \psi \cdot \nabla V \nabla \psi$. We split at height $M$ for the size of $|\na \psi|^2$.
 We get a contribution of the form 
 $KM^{-\fr{1}{2}}$ from the ``bad'' region:
 \begin{equation}
 \int_{\{ |\nabla \psi |^2 \geq M\}} \nabla \psi \cdot \nabla V \nabla \psi \chi dx \leq K M^{-\fr{1}{2}}
 \end{equation}
This power  comes from a priori bounds for $\na V\in L^4$ and for $\na\psi \in L^4$.  In the "good" region  we use the inequality
 \[
 ab \le \epsilon e^a + \log{\epsilon^{-1}}b + b\log b
 \]
 which follows from the familiar $ab\le e^a + b\log b -b$ for nonnegative $a,b$ by considering whether $a\ge \log\epsilon^{-1}$ or not.
 If $a\le \log\epsilon^{-1}$ then we obtain $ab \le (\log\epsilon^{-1})b$. If it is greater, we use the inequality $a'b\le e^{a'} +b\log b$ for $a' = a + \log \epsilon$.
 We obtain an error $\epsilon K$. Therefore
  \begin{equation}
 \int_{\{ |\nabla \psi |^2 < M\}} \nabla \psi \cdot \nabla V \nabla \psi \chi dx \leq \epsilon K + \log \left(\fr{M^2}{\epsilon}\right ) \int\chi |\na\psi|^2 dx 
 \end{equation}
 
 Overall, the right hand side of \eqref{intonce} is bounded by
 \be
  H _{\infty}\left[1+\log \left(\fr{M^2}{\epsilon}\right )\right]\int\chi |\na\psi|^2 dx + (\nu + y_0)K_0
 \la{RHS}
 \ee
 where we set
 \be
(\nu + y_0)K_0 = \epsilon K + KM^{-\fr{1}{2}} + KR^{-\fr{1}{2}}\log R + K\nu.
 \la{kzeronu}
 \ee 
 with $y_0 = \int \chi |\na\psi(0)|^2dx$. This is obtained by taking independently  and explicitly $\epsilon$  and $M^{-1}$  in terms of $\nu + y_0$ and $R$ in terms only of $\nu$.  (We could have done this for $M^{-1}$ and $\epsilon$ as well, but we need (and afford) them to be slightly larger.)
 We write
 \be 
 y(t) = \int\chi |\na\psi|^2 dx.
 \la{y}
 \ee
 The diffential inequality is
 \be
 \fr{dy}{dt} \le \left [  H _{\infty}\left(1  +\log \left(\fr{M^2}{\epsilon}\right )\right)\right]y(t) + K_0(\nu + y_0)
 \la{ode}
 \ee
 Our integrating factor is $e^{- H _{\infty}(1+\log (\fr{M^2}{\epsilon}) )t} = e^{- H _{\infty} t}\left (\fr{M^2}{\epsilon}\right)^{-t H _{\infty}}$. We multiply by it and integrate
 \be
\begin{aligned}
 y(t) &\le \left [y(0)  + \fr{K_0(\nu + y_0)}{  H _{\infty}(1 +\log (\fr{M^2}{\epsilon}))}\right ]e^{ H _\infty t} \left (\fr{M^2}{\epsilon}\right)^{t H _{\infty}}\\
 &\le ( H _{\infty}+K_0) H _{\infty}^{-1} (\nu + y_0)e^{ H _\infty t} \left (\fr{M^2}{\epsilon}\right)^{t H _{\infty}}
 \end{aligned}
  \la{notyet}
 \ee
  Now $\fr{M^2}{\epsilon} = K(\nu + y_0)^{-A}$ with a fixed $A>0$ and for $At H _{\infty}<1$ we have convergence to zero as $\nu +y_0\to 0$. 
  
  We have shown the required convergence for $\na \psi$  in the inviscid limit, and a similar argument yields convergence for $\na \phi$. In particular, \eqref{linconvergence} and thus Proposition \ref{prop:mainRS} is proved. Theorem \ref{thm:main} follows via iteration, see \cite{cwu}.

\section{Patch regularity for left-handed equations}
\label{sec:patc}
In this section, we prove the propagation of regularity for the patch boundary in H\"older spaces. We show that the $C^{2,\alpha}$ boundary regularity can  be preserved even when a $\sigma_+$ patch intersects transversally with a $\sigma_-$ patch. 
\subsection{Propagation of $C^{1,\alpha}$ regularity}
Fix $\alpha \in (0,1)$. We start with the propagation of $C^{1,\alpha}$ regularity for patch solutions to the Left-handed, Screened (LS) model by following the approach \textit{\`a la} \cite{bc}. We recall from \cite{chu} that the (LS) model can be equivalently formulated as
\begin{equation}
    \label{eq:LS}
    \pt \sigma_\pm + v_\pm \cdot \na \sigma_\pm = 0,\quad v_\pm = \pm \U(\sigma_\pm) \mp \bbS(F),
\end{equation}
where $\bbS := \na^\perp (\mathbb I -\Delta)^{-1}\Delta^{-1}$, $F = \frac{\sigma_+ + \sigma_-}{2}$.

Following \cite{bc}, we reformulate the patch problem as follows: let $\sigma_\pm = a_\pm\chi_{\Omega_\pm},$ where $\Omega_\pm$ are bounded domains in $\R^2$. We consider scalars $\varphi_\pm$ solving
\begin{equation}
    \label{eq:phipm}
    \begin{cases}
    \pt \varphi_\pm + v_\pm \cdot \na \varphi_\pm = 0,\\
    \varphi_\pm(x,0) = \varphi_{\pm,0}(x),
    \end{cases}
\end{equation}
where $v_\pm$ can be explicitly written as:
\begin{equation}
    \label{eq:vpm}
    \begin{aligned}
    v_\pm(x) &= \pm \frac{a_\pm}{2\pi} \int_{\Omega_\pm(t)} \na_x^\perp \log|x-y| dy\\
    &\quad \mp \left(\frac{a_+}{4\pi} \int_{\Omega_+(t)} \nabla^\perp_x G(|x-y|) dy + \frac{a_-}{4\pi} \int_{\Omega_-(t)} \nabla^\perp_x G(|x-y|) dy\right) \\
    &=: \pm \U(\sigma_\pm) \mp \bbS(F)
    \end{aligned}
\end{equation}
where for $r > 0$, $G(r) = K_0(r) + \log r$ and $K_0$ is the modified Bessel function. Moreover, the patches $\Omega_\pm$ are given by
\begin{equation}\label{defn:Omegapm}
\Omega_\pm(t) = \{\varphi_\pm(x,t) > 0\}.
\end{equation}
We assume that the initial domains $\Omega_{\pm, 0}$ are defined as:
\begin{equation}\label{defn:Omegapm0}
\Omega_{\pm, 0} = \{\varphi_{\pm,0}(x) > 0\},
\end{equation}
where $\varphi_{\pm,0} \in C^{1,\alpha}(\R^2)$, and the following nondegeneracy condition holds:
\begin{equation}
    \label{nondegeneracy}
    \inf_{x \in \p\Omega_{\pm,0}} |\na \varphi_{\pm,0}(x)| \ge m > 0.
\end{equation}

We also introduce the following notations: since $\sigma_\pm$ is transported by incompressible flows $v_\pm$, we set
$$
(L_\pm)^2 := |\Omega_\pm(t)| = |\Omega_{\pm, 0}|.
$$
With the above notation, we record the following conserved quantities:
\begin{equation}
    \label{eq:patchLp}
    \|\sigma_\pm\|_{L^p} = a_\pm (L_\pm)^{\frac2p},\quad p\in [1,\infty].
\end{equation}
We also introduce the notations: given a function $f$,
\begin{align*}
    |\na f|_{\inf} := \inf_{x \in \p \Omega_\pm(t)} |\na f(x,t)|,\\
    |f|_\alpha := \sup_{x \neq x'}\frac{|f(x) - f(x')|}{|x-x'|^\alpha}.
\end{align*}

We first show the following result concerning the propagation of the $C^{1,\alpha}$ regularity of $\varphi_\pm$.

\begin{thm}\label{thm:C1alpha}
    Suppose $a_\pm \neq 0$, $\Omega_{\pm,0}$, $\varphi_{\pm,0} \in C^{1,\alpha}$, $\alpha \in (0,1)$, such that \eqref{defn:Omegapm0} and \eqref{nondegeneracy} hold. There exists a constant $C > 0$ depending only on initial data and $\alpha$ that the problem \eqref{eq:phipm}--\eqref{defn:Omegapm0} has a unique solution $\varphi_\pm(x,t) \in C^{1,\alpha}(\R^2)$ with the following bounds:
    \begin{align}
        \|\na v_\pm(\cdot,t)\|_{L^\infty} &\le C\|\na v_\pm(\cdot,0)\|_{L^\infty} e^{Ct},\label{est:navpminfty}\\
        |\na \varphi_\pm(\cdot,t)|_{\alpha} &\le C(|\na \varphi_\pm(\cdot,0)|_{\alpha} + \|\na \varphi_\pm(\cdot,0)\|_{L^\infty}t) e^{Ct + C_0e^{Ct}},\\
        \|\na \varphi_\pm(\cdot,t)\|_{L^\infty} &\le \|\na \varphi_\pm(\cdot,0)\|_{L^\infty} e^{e^{Ct}},\label{est:naphipminfty}\\
        |\na \varphi_\pm(\cdot,t)|_{\inf} &\ge |\na \varphi_\pm(\cdot,0)|_{\inf} e^{-e^{Ct}},
    \end{align}
    for $t \ge 0$, where $C_0$ is a dimensional constant.
\end{thm}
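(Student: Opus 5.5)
We follow the Bertozzi--Constantin strategy \cite{bc}, adapted to the two-scalar left-handed screened system \eqref{eq:LS}. The key structural observation is that $v_\pm = \pm\U(\sigma_\pm)\mp\bbS(F)$. The operator $\bbS = \na^\perp(\mathbb I-\Delta)^{-1}\Delta^{-1}$ gains two derivatives relative to $\U$, since its kernel $\na^\perp G$ with $G = K_0+\log r$ is $C^{1,\beta}$ away from the origin and only mildly singular at it. Hence $\bbS(F)$ is bounded in $C^{1,\beta}$ (indeed in $W^{2,p}$ for all $p<\infty$) by the conserved norms \eqref{eq:patchLp} alone. In particular, $\|\na\bbS(F)\|_{L^\infty}+|\na \bbS(F)|_\alpha \le C(a_\pm,L_\pm)$ uniformly in time, with no geometric information about the patches. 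The only genuinely singular part of $\na v_\pm$ is $\na\U(\sigma_\pm)$, which involves a single patch. So the interaction of the two patches, including transversal crossings, never enters the singular estimate; this is why merger is harmless.

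The plan has four steps, carried out in order. First, we establish local existence and uniqueness of $C^{1,\alpha}$ solutions $\varphi_\pm$ by the standard iteration/contraction for \eqref{eq:phipm}, using the Yudovich well-posedness of \cite{chu} for the patches themselves. Global existence then follows from the a priori bounds below via continuation.

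Second, we differentiate \eqref{eq:phipm}. The vector field $W_\pm = \na^\perp\varphi_\pm$ is tangent to $\p\Omega_\pm$ and satisfies $(\pt+v_\pm\cdot\na)W_\pm = (\na v_\pm)W_\pm$. This immediately gives $\|\na\varphi_\pm(t)\|_{L^\infty}\le\|\na\varphi_{\pm,0}\|_{L^\infty}\exp\int_0^t\|\na v_\pm\|_{L^\infty}$, and the analogous lower bound for $|\na\varphi_\pm|_{\inf}$, because $\p\Omega_\pm(t)$ is carried by the flow. Once \eqref{est:navpminfty} holds, these integrals produce the double exponentials in \eqref{est:naphipminfty} and in the $|\cdot|_{\inf}$ bound.

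Third, we estimate the Hölder seminorm. Taking differences along trajectories, $|W_\pm|_\alpha$ obeys
\[
\frac{d}{dt}|W_\pm|_\alpha\le C\|\na v_\pm\|_{L^\infty}|W_\pm|_\alpha + C\|W_\pm\|_{L^\infty}|\na v_\pm|_\alpha .
\]
Here we must control $|(\na v_\pm) W_\pm|_\alpha$ rather than $|\na v_\pm|_\alpha$, since the latter is infinite for a patch. For the $\bbS$ part we simply use the uniform $C^{1,\alpha}$ bound from the first paragraph. For the $\U(\sigma_\pm)$ part we invoke the geometric lemma of \cite{bc}. For a divergence-free $W$ tangent to $\p\Omega$,
\[
|(\na\U(\chi_\Omega))W|_\alpha \le C\|\na\U(\chi_\Omega)\|_{L^\infty}|W|_\alpha ,
\]
and the "good derivative" $W\cdot\na\U$ is a commutator-type singular integral with a vanishing-mean kernel.

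Fourth, we prove the logarithmic estimate that is the heart of \cite{bc}:
\[
\|\na\U(\sigma_\pm)\|_{L^\infty}\le C a_\pm\left(1+\log_+\frac{|W_\pm|_\alpha}{|\na\varphi_\pm|_{\inf}}\right).
\]
Its proof splits the kernel integral at a radius $\delta$, where $\delta^\alpha\simeq|\na\varphi_\pm|_{\inf}/|W_\pm|_\alpha$, so that inside $B_\delta$ the boundary is close to a line and the angular cancellation of the kernel applies. Adding the bounded $\bbS$ contribution, we set $Q(t) = \log\bigl(e+|W_\pm|_\alpha/|\na\varphi_\pm|_{\inf}\bigr)$, summed over both signs. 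Combining steps two through four, $Q$ satisfies $Q'\le C(1+Q)$, so $Q\le C_0e^{Ct}$. This yields \eqref{est:navpminfty} in the form $\|\na v_\pm\|_{L^\infty}\lesssim e^{Ct}$. Feeding this back into the inequality for $|W_\pm|_\alpha$ and integrating gives the stated Hölder bound with the factor $e^{Ct+C_0e^{Ct}}$. The term $\|\na\varphi_{\pm,0}\|_{L^\infty}t$ comes from the inhomogeneous $\bbS$ forcing term integrated in time.

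The main obstacle is the Hölder estimate in the third step, specifically verifying that the cross term from $\bbS(F)$ is truly regular. The point is that $F$ contains the other patch $\sigma_\mp$, which is not transported by $v_\pm$. Nonetheless, $\na\bbS(F)\in C^{\alpha}$ with bounds depending only on $\|F\|_{L^1\cap L^\infty}$, because $\na\na^\perp G$ is only logarithmically singular, with a Hölder-continuous modulus. We will check this first by explicit kernel bounds, using $K_0(r)=-\log r+c+O(r^2\log r)$ as $r\to0$ and exponential decay of $K_0$ as $r\to\infty$, which together give $G\in C^{2,\beta}_{loc}$ up to an $r^2\log r$ term.
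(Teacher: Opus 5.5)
Your route is the paper's. You use the splitting $v_\pm=\pm\U(\sigma_\pm)\mp\bbS(F)$, whose singular part sees a single patch, and the two Bertozzi--Constantin lemmas: the logarithmic bound on $\|\na\U(\sigma_\pm)\|_{L^\infty}$, and $|(\na\U(\sigma_\pm))W_\pm|_\alpha\le C_0\|\na\U(\sigma_\pm)\|_{L^\infty}|W_\pm|_\alpha$ for tangential $W_\pm$. You add a time-uniform bound on $\|\na\bbS(F)\|_{L^\infty}+|\na\bbS(F)|_\alpha$, Lagrangian bounds for $W_\pm=\na^\perp\varphi_\pm$, and Gronwall. The paper gets the $\bbS$ bound from $\|\na\bbS(F)\|_{H^2}\le\|F\|_{L^2}$ by Plancherel and Sobolev embedding. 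This is quicker than your expansion of $K_0$, but equivalent.

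The step that does not work as written is the closing inequality $Q'\le C(1+Q)$ for $Q=\log\bigl(e+|W_\pm|_\alpha/|\na\varphi_\pm|_{\inf}\bigr)$.

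\begin{itemize}
\item The H\"older inequality you actually have is $\frac{d}{dt}|W_\pm|_\alpha\le\alpha\|\na v_\pm\|_{L^\infty}|W_\pm|_\alpha+|(\na v_\pm)W_\pm|_\alpha$; your first display is unusable, as you note.
\item The $\bbS$ part of the last term produces an inhomogeneous forcing $\bar C\|W_\pm\|_{L^\infty}$, with $\bar C=C_\alpha(|a_+|L_++|a_-|L_-)$.
\item In $Q'$ this forcing appears as $\bar C\|W_\pm\|_{L^\infty}/\bigl(|\na\varphi_\pm|_{\inf}(e+|W_\pm|_\alpha/|\na\varphi_\pm|_{\inf})\bigr)$, and this is not bounded by $C(1+Q)$.
\item Nothing bounds $\|W_\pm\|_{L^\infty}$ by $|W_\pm|_\alpha$: $\varphi_\pm$ lives on all of $\mathbb{R}^2$ and may be nearly affine away from the patch.
\item The ratio $\|W_\pm\|_{L^\infty}/|\na\varphi_\pm|_{\inf}$ is only controlled by $\exp\bigl(2\int_0^t\|\na v_\pm\|_{L^\infty}ds\bigr)$.
\end{itemize}

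What you really get is therefore $Q'\le C(1+Q)+C\exp\bigl(C\int_0^t(1+Q)\,ds\bigr)$. This is superlinear and does not rule out finite-time blowup, so the global bound does not follow.

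The repair is easy, in either of two ways.

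\begin{itemize}
\item Put $\|W_\pm\|_{L^\infty}$ into the numerator, i.e.\ take $Q=\log\bigl(e+(|W_\pm|_\alpha+\|W_\pm\|_{L^\infty})/|\na\varphi_\pm|_{\inf}\bigr)$. Then $\frac{d}{dt}(|W_\pm|_\alpha+\|W_\pm\|_{L^\infty})\le C(1+\|\na\U(\sigma_\pm)\|_{L^\infty})(|W_\pm|_\alpha+\|W_\pm\|_{L^\infty})$, and the logarithmic estimate does give $Q'\le C(1+Q)$.
\item Alternatively, do as the paper does and integrate by Duhamel first. The forcing then contributes only $\log\bigl(|W_\pm(0)|_\alpha+\bar C\|W_\pm(0)\|_{L^\infty}t\bigr)+(1-\alpha)\int_0^t\|\na v_\pm\|_{L^\infty}ds$ to the logarithm of the upper bound for $|W_\pm(t)|_\alpha$. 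This is linear in $\int_0^t\|\na\U(\sigma_\pm)\|_{L^\infty}ds$, so Gronwall applied directly to $\|\na\U(\sigma_\pm)\|_{L^\infty}$ closes.
\end{itemize}
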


The rest of this subsection is dedicated to the proof of Theorem \ref{thm:C1alpha}. We arrange the proof into the following steps.

\noindent\textbf{Step 1: Estimates for the singular part $\U(\sigma_\pm)$.} Since the operator $\U$ coincides with the classical Biot-Savart operator for 2D incompressible Euler, we record the following results from \cite{bc} concerning Euler patches:
\begin{prop}\label{prop:naU}
    Let $\varphi_\pm$ be related to $\Omega_\pm$ as in \eqref{defn:Omegapm} and \eqref{defn:Omegapm0}. Then for $\alpha \in (0,1)$, the following inequality holds:
    \begin{equation}
        \label{est:LinftynaU}
        \|\na \U(\sigma_\pm)\|_{L^\infty} \le \left(\frac92 + \frac2\alpha\right)|a_\pm| \left(1 + \log \Delta^\pm_\alpha\right),
    \end{equation}
    where
    $$
    \Delta_\alpha^\pm := \frac{|\na\varphi_\pm|_\alpha L_\pm^\alpha}{|\na\varphi_\pm|_{\inf}}.
    $$
\end{prop}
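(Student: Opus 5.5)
This is the classical Bertozzi--Constantin logarithmic estimate, and I will reproduce their argument for a single patch. Since $\nabla\U(\sigma_\pm)$ depends only on $a_\pm\chi_{\Omega_\pm}$, the other scalar plays no role. By scaling we may normalize $a_\pm=1$. Up to a constant multiple of the identity, $\nabla\U(\sigma_\pm)(x)$ is given by the principal value integral
\[
\mathrm{P.V.}\int_{\Omega_\pm}\sigma(x-y)\,\frac{dy}{|x-y|^2},
\]
where $\sigma$ is a homogeneous degree-zero kernel with mean zero on circles. The constant term is bounded by $1/2$.

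I fix $x$ and a scale $\delta\in(0,L_\pm]$ to be chosen later, and split the integral into three regions.

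\emph{Far region} $|x-y|\ge L_\pm$. Since $|\Omega_\pm|=L_\pm^2$, the contribution is bounded by an absolute constant. A rearrangement argument shows that the worst case is an annulus of area $L_\pm^2$ outside radius $L_\pm$, which gives about $\log 2$.

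\emph{Intermediate region} $\delta\le|x-y|\le L_\pm$. Bounding the kernel by $|x-y|^{-2}$ gives a contribution of at most $2\pi\log(L_\pm/\delta)$, up to normalization.

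\emph{Near region} $|x-y|<\delta$. Here I use the mean-zero property of $\sigma$ on circles. If $d(x):=\mathrm{dist}(x,\p\Omega_\pm)\ge\delta$, then the ball $B_\delta(x)$ lies entirely inside or entirely outside the patch, and the contribution vanishes. Otherwise, pick $\tilde x\in\p\Omega_\pm$ with $|x-\tilde x|=d(x)<\delta$. The level-set description $\Omega_\pm=\{\varphi_\pm>0\}$ and the definition of the H\"older seminorm give
\[
|\varphi_\pm(y)-\nabla\varphi_\pm(\tilde x)\cdot(y-\tilde x)|\le|\nabla\varphi_\pm|_\alpha|y-\tilde x|^{1+\alpha}.
\]
Hence on each circle $|x-y|=r$ with $r<\delta$, the set $\Omega_\pm$ differs from the half-plane $\{\nabla\varphi_\pm(\tilde x)\cdot(y-\tilde x)>0\}$ only in a strip of angular measure at most
\[
C\,\frac{|\nabla\varphi_\pm|_\alpha\, r^\alpha}{|\nabla\varphi_\pm|_{\inf}}
\]
(for $d(x)\le r$; circles with $r<d(x)$ contribute zero as before). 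The half-plane integrated against $\sigma$ over a circle gives a contribution that is not zero but is bounded. It would vanish if the half-plane passed through $x$; the offset of the line from $x$ is at most $d(x)$. I would handle this by comparing with the half-plane through $x$ parallel to the tangent line: the symmetric difference on the circle of radius $r$ has angular size $\lesssim d(x)/r$, whose integral from $d(x)$ to $\delta$ against $dr/r$ is $O(1)$. The strip error integrates to
\[
\int_0^\delta C\,\frac{|\nabla\varphi_\pm|_\alpha r^\alpha}{|\nabla\varphi_\pm|_{\inf}}\,\frac{dr}{r}=\frac{C}{\alpha}\,\frac{|\nabla\varphi_\pm|_\alpha\,\delta^\alpha}{|\nabla\varphi_\pm|_{\inf}}.
\]

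Finally I choose $\delta=L_\pm(\Delta_\alpha^\pm)^{-1/\alpha}$, so that the near-region term is $O(1/\alpha)$ and the intermediate term becomes $\frac{2\pi}{\alpha}\log\Delta_\alpha^\pm$. If $\Delta_\alpha^\pm<1$, I take $\delta=L_\pm$. This yields
\[
\|\nabla\U(\sigma_\pm)\|_{L^\infty}\le C_\alpha\,|a_\pm|\,(1+\log\Delta_\alpha^\pm).
\]
The main obstacle is getting the stated explicit constant $\frac92+\frac2\alpha$, which requires careful bookkeeping of the normalization of $\sigma$ (its sup is $1/(2\pi)$). This is exactly the computation in \cite{bc}, so I would cite it for the constants and check only that the hypotheses coincide: $\varphi_\pm\in C^{1,\alpha}$ and the nondegeneracy assumption \eqref{nondegeneracy}.
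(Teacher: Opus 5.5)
Your proposal is correct and takes the same route as the paper, which does not reprove this estimate but quotes it from Bertozzi--Constantin \cite{bc}; your splitting into the local term, the far field controlled by $|\Omega_\pm|=L_\pm^2$, the intermediate annulus giving $\log(L_\pm/\delta)$, and the near field handled by the mean-zero kernel and a half-plane comparison at the scale $\delta^\alpha=|\na\varphi_\pm|_{\inf}/|\na\varphi_\pm|_\alpha$ is precisely their argument. One detail to tighten: your angular bound for the strip around the line through $\tilde x$ weakens to a square-root bound when $r$ is close to $d(x)$, because the circle is then nearly tangent to that line. It is cleaner to compare $\Omega_\pm$ directly with the half-plane through $x$: its discrepancy set on the circle of radius $r$ lies in a band through the center of width at most $d(x)+2w$, with $w\le 2^{1+\alpha}|\na\varphi_\pm|_\alpha r^{1+\alpha}/|\na\varphi_\pm|_{\inf}$, so it has angular measure at most $\pi(d(x)+2w)/r$, and this integrates against $dr/r$ to the $O(1)+O(1/\alpha)$ bound you want.
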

Moreover, we have the following result:
\begin{prop}
    There exists a universal constant $C_0 > 0$ such that if $W_\pm \in C^\alpha (\R^2,\R^2)$ is a divergence-free vector field tangential to $\p \Omega_\pm$, then
    \begin{equation}
        \label{est:HoldernaUW}
        |\na\U(\sigma_\pm) W_\pm|_\alpha \le C_0\|\na \U(\sigma_\pm)\|_{L^\infty} |W_\pm|_\alpha.
    \end{equation}
\end{prop}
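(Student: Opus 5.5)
This is the Bertozzi--Constantin commutator estimate, and the plan is to prove it directly from the integral representation of $\na\U(\sigma_\pm)$.

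\textbf{Reduction to a kernel identity.} Write $\na \U(\sigma_\pm)(x) = a_\pm\,\mathrm{P.V.}\int_{\Omega_\pm} K(x-y)\,dy + c\,a_\pm \chi_{\Omega_\pm}(x) I$. Here $K = \na\na^\perp \tfrac{1}{2\pi}\log|\cdot|$ is a homogeneous Calderón--Zygmund kernel of degree $-2$ with zero mean on circles. Applied to $W_\pm$, this gives
\[
(\na\U(\sigma_\pm) W_\pm)(x) = a_\pm\,\mathrm{P.V.}\int_{\Omega_\pm} K(x-y)W_\pm(x)\,dy + c\,a_\pm\chi_{\Omega_\pm}(x)W_\pm(x).
\]
The indicator term is not H\"older, so it has to be combined with the principal value integral. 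The key identity uses that $W_\pm$ is divergence free and tangent to $\p\Omega_\pm$: integrating by parts,
\[
\int_{\Omega_\pm} K(x-y)W_\pm(y)\,dy = \int_{\Omega_\pm}\na_y\big(\ldots\big)\cdot W_\pm(y)\,dy,
\]
and the boundary term $\int_{\p\Omega_\pm}(\cdot)\,W_\pm\cdot n\,dS$ vanishes. Consequently $\int_{\Omega_\pm} K(x-y)W_\pm(y)\,dy$ equals $\na_x$ of the velocity generated by the divergence-free field $\chi_{\Omega_\pm} W_\pm$. Since $\chi_{\Omega_\pm}W_\pm$ is divergence free in the distributional sense, its curl is $\mathrm{curl}(W_\pm)\chi_{\Omega_\pm}$ with no singular boundary part. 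This lets me express
\[
\na\U(\sigma_\pm)W_\pm = \int_{\Omega_\pm} K(x-y)\big(W_\pm(x)-W_\pm(y)\big)\,dy + T(\chi_{\Omega_\pm}W_\pm),
\]
where $T$ is a combination of CZ operators acting on the divergence-free field $\chi W$. The term $T(\chi W)$ is in $C^\alpha$ with norm $\lesssim |a_\pm||W_\pm|_\alpha$, by standard $C^\alpha$ estimates for $\na\na^\perp\D^{-1}$ applied to $\mathrm{div}$ and $\mathrm{curl}$ data. Here $\mathrm{div}(\chi W)=0$ is exactly where tangency is used.

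\textbf{Estimating the commutator.} For $\int_{\Omega_\pm} K(x-y)(W(x)-W(y))\,dy$, I take $x,x'$ with $\delta=|x-x'|$ and split the domain into $|x-y|<2\delta$ and its complement. The near region is bounded by $\int_{|z|<3\delta}|z|^{-2}|z|^\alpha|W|_\alpha \lesssim \delta^\alpha|W|_\alpha$. On the far region, I write the difference as
\[
\big[K(x-y)-K(x'-y)\big]\big(W(x)-W(y)\big) + K(x'-y)\big(W(x)-W(x')\big).
\]
The first piece is bounded by $\delta\int_{|z|>2\delta}|z|^{-3+\alpha}|W|_\alpha \lesssim \delta^\alpha|W|_\alpha$. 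The second piece is $(W(x)-W(x'))\cdot \int_{\Omega_\pm\cap\{|x-y|>2\delta\}}K(x'-y)\,dy$, and this truncated integral is bounded uniformly by $C\|\na\U(\sigma_\pm)\|_{L^\infty}/|a_\pm| + C$. That bound follows since truncations of the principal value differ from the full value by an integral over an annulus of $K$, which has zero mean and is bounded. Collecting the pieces gives $|a_\pm| C|W|_\alpha(1+\|\na\U\|_{L^\infty}/|a_\pm|)$.

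\textbf{Closing the bound and the main obstacle.} Since $\|\na\U(\sigma_\pm)\|_{L^\infty}\gtrsim |a_\pm|$ is not automatic, I will absorb the $|a_\pm|$ term by noting it is dominated by the right side of \eqref{est:LinftynaU}-type quantities. Alternatively, I can state the estimate with $|a_\pm|+\|\na\U\|_{L^\infty}$, as in \cite{bc}, where the convention $1+\log\Delta_\alpha \ge 1$ makes them comparable. The main obstacle is the first step: organizing the indicator term and the boundary contributions so that tangency of $W_\pm$ cancels the discontinuity of $\chi_{\Omega_\pm}$ across $\p\Omega_\pm$. I will handle it by the distributional identity $\mathrm{div}(\chi_{\Omega_\pm}W_\pm)=0$ and careful truncation near the boundary, following the geometric lemma of \cite{bc}.
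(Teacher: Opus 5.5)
Your outline follows the Bertozzi--Constantin argument, which the paper only cites from \cite{bc}. However, the reduction step, which carries the whole proof, is wrong as written. The local part of $\na\U(\sigma_\pm)$ is not $c\,a_\pm\chi_{\Omega_\pm}I$. It is $\frac{a_\pm}{2}\chi_{\Omega_\pm}J$ with $JW=W^\perp$, since $\na\U$ is trace-free and its antisymmetric part is $\frac{\sigma_\pm}{2}J$. More seriously, $\mathrm{curl}(\chi_{\Omega_\pm}W_\pm)$ does have a singular part. Tangency removes the boundary measure from the divergence, not from the curl. The curl carries $-(n\times W_\pm)\,dS$ on $\p\Omega_\pm$, with $n$ the outward normal, and this is nonzero precisely because $W_\pm$ is tangent. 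Also, $\mathrm{curl}\,W_\pm$ is only a $C^{\alpha-1}$ distribution. No Calder\'on--Zygmund H\"older estimate applies to the discontinuous field $\chi_{\Omega_\pm}W_\pm$, so your remainder $T(\chi_{\Omega_\pm}W_\pm)$ cannot be \emph{estimated}; the point is that it \emph{vanishes}. Contracting the kernel against $W_\pm$ sees only the divergence:
\[
\mathrm{PV}\int_{\Omega_\pm}K(x-y)W_\pm(y)\,dy=\na^\perp\D^{-1}\mathrm{div}(\chi_{\Omega_\pm}W_\pm)(x)-\tfrac12\chi_{\Omega_\pm}(x)W_\pm(x)^\perp=-\tfrac12\chi_{\Omega_\pm}(x)W_\pm(x)^\perp .
\]
Equivalently, integrate by parts on $\Omega_\pm\setminus B_\epsilon(x)$: the $\p\Omega_\pm$ term drops and the small circle produces $-\frac12W_\pm(x)^\perp$. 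This cancels the local term exactly, giving
\[
(\na\U(\sigma_\pm)W_\pm)(x)=a_\pm\int_{\Omega_\pm}K(x-y)\bigl(W_\pm(x)-W_\pm(y)\bigr)\,dy .
\]
This integral converges absolutely and leaves no remainder. With your $cI$, the remainder would instead jump across $\p\Omega_\pm$.

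The second gap is in the far field. There you need $|a_\pm|\,\bigl|\int_{\Omega_\pm\cap\{|x-y|>2\delta\}}K(x'-y)\,dy\bigr|\le C\|\na\U(\sigma_\pm)\|_{L^\infty}$, and your reason for it is incorrect. Recentering the truncation from $x$ to $x'$ costs only the annulus $\{\delta\le|x'-y|\le3\delta\}$, which is fine. But the truncation at $x'$ differs from the principal value by $\mathrm{PV}\int_{\Omega_\pm\cap B_{2\delta}(x')}K(x'-y)\,dy$. This is a ball integral involving all scales below $2\delta$, and mean zero on circles bounds it only by the logarithm of a ratio of scales, not uniformly. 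The bound is nevertheless true, via a Cotlar-type step. Let $T$ be the principal-value operator and take $w\in B_\delta(x')$. Smoothness of $K$ away from the origin gives
\[
\int_{|x'-y|>2\delta}K(x'-y)\chi_{\Omega_\pm}(y)\,dy=T\chi_{\Omega_\pm}(w)-T\bigl(\chi_{\Omega_\pm\cap B_{2\delta}(x')}\bigr)(w)+O(1).
\]
Average in $w$ over $B_\delta(x')$ and bound the middle term by Cauchy--Schwarz and the $L^2$-boundedness of $T$. This yields $\|T\chi_{\Omega_\pm}\|_{L^\infty}+C$. (The geometric argument behind \eqref{est:LinftynaU} also bounds all truncations uniformly, but by $1+\log\Delta^\pm_\alpha$ rather than by $\|\na\U(\sigma_\pm)\|_{L^\infty}$.)

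Finally, no fallback is needed for the stray $|a_\pm|$. Since $\mathrm{curl}\,\U(\sigma_\pm)=\sigma_\pm$, you have $|a_\pm|\le 2\|\na\U(\sigma_\pm)\|_{L^\infty}$. Hence $|a_\pm|\,\|T\chi_{\Omega_\pm}\|_{L^\infty}\le C\|\na\U(\sigma_\pm)\|_{L^\infty}$, and the estimate holds exactly as stated. Your near-field bound and your bound on the $[K(x-y)-K(x'-y)]$ piece are fine.
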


\noindent\textbf{Step 2: Estimates for the regular part $\bbS(F)$.} In this step, we first prove the following proposition concerning the regularity of $\na \bbS(F)$:
\begin{prop}
    Let $\sigma_\pm = a_\pm \chi_{\Omega_\pm}$ and $F = \frac12 (\sigma_+ + \sigma_-)$. The following bound holds:
    \begin{equation}
        \label{est:SF1}
        \|\na \bbS(F)\|_{H^2} \le \|F\|_{L^2}.
    \end{equation}
    In particular, the following bound also holds: for $\alpha \in (0,1)$,
    \begin{equation}
        \label{est:SF2}
        \|\na\bbS(F)\|_{L^\infty} + |\na \bbS(F)|_\alpha \le C_\alpha (|a_+| L_+ + |a_-| L_-).
    \end{equation}
\end{prop}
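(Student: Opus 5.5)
The plan is to work on the Fourier side, where $\na \bbS(F)$ is an explicit Fourier multiplier applied to $F$, and then pass to pointwise and Hölder bounds by Sobolev embedding.

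\textbf{Step 1: the symbol.} With the convention $\widehat{\Delta f} = -|\xi|^2\hat f$, we have $\bbS = \na^\perp(\mathbb I-\Delta)^{-1}\Delta^{-1}$. Hence each component of $\na\bbS(F)$ has symbol
$$
m_{jk}(\xi) = \frac{\xi_j\,\xi_k^\perp}{|\xi|^2(1+|\xi|^2)},
$$
up to sign and factors of $i$. The factor $\xi_j\xi_k^\perp/|\xi|^2$ is a zeroth-order Riesz-type symbol bounded by $1$. The remaining factor $(1+|\xi|^2)^{-1}$ gains two derivatives. Therefore
$$
(1+|\xi|^2)\,|m_{jk}(\xi)| \le 1 .
$$

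\textbf{Step 2: the $H^2$ bound \eqref{est:SF1}.} Using the norm $\|g\|_{H^2}^2=\int(1+|\xi|^2)^2|\hat g|^2\,d\xi$ (with the Plancherel normalization built in), Step 1 gives
$$
\|\na\bbS(F)\|_{H^2}^2 \le \int |\hat F(\xi)|^2\,d\xi = \|F\|_{L^2}^2 .
$$
There is no low-frequency problem. The apparent singularity of $\Delta^{-1}$ at $\xi=0$ is cancelled exactly by the two derivatives, since $\xi_j\xi_k^\perp/|\xi|^2$ is bounded. This is the main point to check: the kernel of $\bbS$ itself is only logarithmically behaved, but $\na\bbS$ loses nothing at low frequency.

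\textbf{Step 3: from $H^2$ to the pointwise and Hölder bounds \eqref{est:SF2}.} In two dimensions $H^2(\R^2)\hookrightarrow C^{0,\alpha}(\R^2)$ for every $\alpha\in(0,1)$; this already follows from $H^{1+\alpha}\hookrightarrow C^{0,\alpha}$. So
$$
\|\na\bbS(F)\|_{L^\infty} + |\na\bbS(F)|_\alpha \le C_\alpha\|F\|_{L^2}.
$$
For the patches, \eqref{eq:patchLp} with $p=2$ gives $\|\sigma_\pm\|_{L^2}=|a_\pm|L_\pm$. Since $F=\frac12(\sigma_++\sigma_-)$, the triangle inequality yields
$$
\|F\|_{L^2}\le \tfrac12\big(|a_+|L_++|a_-|L_-\big),
$$
which proves \eqref{est:SF2}.

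The only delicate point is Step 2: confirming that the normalization of the $H^2$ norm and of the Fourier transform produces the constant $1$ in \eqref{est:SF1}. If the conventions differ, an absolute constant appears, which is harmless for \eqref{est:SF2}.
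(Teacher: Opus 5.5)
Your proposal is correct and matches the paper's proof: compute the symbol $\frac{\xi^\perp\xi}{|\xi|^2(1+|\xi|^2)}$, apply Plancherel for the $H^2$ bound, then use Sobolev embedding with $\|\sigma_\pm\|_{L^2}=|a_\pm|L_\pm$ for the $L^\infty$ and H\"older bound. One small fix: your componentwise estimate $(1+|\xi|^2)|m_{jk}|\le 1$ only gives a factor $4$ after summing over the four entries; the constant $1$ follows from the identity $\sum_{j,k}|\xi_j\xi_k^\perp|^2=|\xi|^4$, i.e.\ the matrix symbol has Frobenius norm exactly $(1+|\xi|^2)^{-1}$.
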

\begin{proof}
    To prove the proposition, it is more convenient to use the Fourier representation:
    $$
    \Hat{\na \bbS(F)}(\xi) = \frac{\xi^\perp \xi}{|\xi|^2 (1+ |\xi|^2)}\Hat{F}(\xi).
    $$
    Then by Plancherel's identity and the above identity, we see that
    $$
    \|\na \bbS(F)\|_{H^2}^2 = \int_{\R^2} (1+|\xi|^2)^2 \left|\frac{\xi^\perp \xi}{|\xi|^2 (1+ |\xi|^2)}\Hat{F}(\xi)\right|^2 d\xi \le \|F\|_{L^2}^2,
    $$
    which proves \eqref{est:SF1}. The estimate \eqref{est:SF2} follows from Sobolev embedding and \eqref{eq:patchLp}.
\end{proof}

The following corollary then holds:
\begin{cor}
    If $W \in (L^\infty \cap C^\alpha) (\R^2,\R^2)$ is an arbitrary divergence-free vector field, then
    \begin{equation}
        \label{est:HoldernaSW}
        |\na\bbS(F) W|_\alpha \le C_\alpha (|a_+| L_+ + |a_-| L_-)(\|W\|_{L^\infty} + |W|_\alpha),
    \end{equation}
    where $C_\alpha$ is defined in \eqref{est:SF2}.
\end{cor}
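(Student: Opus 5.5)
The corollary is a product estimate. I would use the standard Hölder-seminorm bound for products, $|fg|_\alpha \le \|f\|_{L^\infty}|g|_\alpha + |f|_\alpha\|g\|_{L^\infty}$, and apply it entrywise to the matrix $\na\bbS(F)$ acting on the vector $W$. For each component, $(\na\bbS(F)W)_i = \sum_j \p_j \bbS(F)_i W_j$. For $x\neq x'$ I would write
\[
\p_j\bbS(F)_i(x)W_j(x) - \p_j\bbS(F)_i(x')W_j(x') = \p_j\bbS(F)_i(x)\bigl(W_j(x)-W_j(x')\bigr) + \bigl(\p_j\bbS(F)_i(x)-\p_j\bbS(F)_i(x')\bigr)W_j(x'),
\]
then divide by $|x-x'|^\alpha$ and take the supremum. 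This gives
\[
|\na\bbS(F)W|_\alpha \le C\bigl(\|\na\bbS(F)\|_{L^\infty}|W|_\alpha + |\na\bbS(F)|_\alpha\|W\|_{L^\infty}\bigr),
\]
where $C$ is a dimensional constant coming from summing over the components $i,j\in\{1,2\}$.

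Next I would insert \eqref{est:SF2}. It bounds both $\|\na\bbS(F)\|_{L^\infty}$ and $|\na\bbS(F)|_\alpha$ by $C_\alpha(|a_+|L_+ + |a_-|L_-)$. Substituting gives \eqref{est:HoldernaSW}, after absorbing the numerical component-counting factor into $C_\alpha$ (equivalently, redefining $C_\alpha$ by a harmless absolute factor).

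The divergence-free hypothesis on $W$ is not needed here; it only keeps the statement parallel with \eqref{est:HoldernaUW}. The tangency hypothesis required there is also absent, because $\na\bbS(F)$ is genuinely $C^\alpha$: by \eqref{est:SF1} and the embedding $H^2\subset C^{\alpha}$ in two dimensions. There is no real obstacle. The only point needing care is that \eqref{est:SF2} controls the full $C^\alpha$ norm of $\na\bbS(F)$, sup norm included, and that is exactly what the splitting uses.
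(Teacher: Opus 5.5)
Your proposal is correct and is essentially the argument the paper leaves implicit: the corollary is stated without proof, as a consequence of the Hölder product rule applied to \eqref{est:SF2}. If you split at the level of the matrix--vector product, writing $\na\bbS(F)(x)\bigl(W(x)-W(x')\bigr) + \bigl(\na\bbS(F)(x)-\na\bbS(F)(x')\bigr)W(x')$ and using the operator (or Frobenius) norm on matrices, you get \eqref{est:HoldernaSW} with exactly the constant $C_\alpha$ of \eqref{est:SF2}, with no component-counting factor to absorb.
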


\noindent\textbf{Step 3: Time-dependent estimate for $\na^\perp \varphi_\pm$.} 
We prove the following result:
\begin{prop}\label{prop:evolution}
    Let $T > 0$. Assume that $\varphi_\pm$ is a solution to \eqref{eq:phipm} on time interval $[0,T)$. Suppose that $\varphi_\pm \in C^{1,\alpha}$ and $|\na\varphi_\pm|_{\inf} > 0$ for all $t \in [0,T)$. Then
    \begin{equation}
        \label{est:Calphanaphi}
        \begin{aligned}
        |\na\varphi_\pm&(\cdot,t)|_\alpha \le \left[|\na \varphi_\pm(\cdot,0)|_\alpha + \bar{C}(a_\pm, L_\pm, \alpha)\|\na \varphi_\pm(\cdot,0)\|_{L^\infty}t\right]\\
        &\times \exp \left(2\bar{C}(a_\pm, L_\pm, \alpha)t + (C_0+1) \int_0^t \| \na\U_\pm(\sigma_\pm)(\cdot,s)\|_{L^\infty}ds\right),
        \end{aligned}
    \end{equation}
    \begin{equation}
        \label{est:Linftynaphi}
        \|\na\varphi_\pm(\cdot,t)\|_{L^\infty} \le \|\na\varphi_{\pm,0}\|_{L^\infty}\exp\left(\int_0^t \|\na v_\pm(\cdot,s)\|_{L^\infty} ds\right),
    \end{equation}
    \begin{equation}
        \label{est:infnaphi}
        |\na\varphi_\pm(\cdot,t)|_{\inf} \ge \|\na\varphi_{\pm,0}\|_{\inf}\exp\left(-\int_0^t \|\na v_\pm(\cdot,s)\|_{L^\infty} ds\right).
    \end{equation}
\end{prop}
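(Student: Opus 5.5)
The plan follows the Bertozzi--Constantin scheme. We work with the tangential field $W_\pm := \na^\perp\varphi_\pm$ rather than with $\na\varphi_\pm$ directly. Since $\varphi_\pm$ is transported by the divergence-free field $v_\pm$, differentiating \eqref{eq:phipm} shows that $W_\pm$ satisfies the commutator equation
\begin{equation*}
\pt W_\pm + v_\pm\cdot\na W_\pm = (\na v_\pm) W_\pm .
\end{equation*}
Moreover, $W_\pm$ is divergence-free and tangential to $\p\Omega_\pm(t)$. Evaluating along the Lagrangian trajectories $X_\pm(a,t)$ of $v_\pm$ gives $\frac{d}{dt}|W_\pm(X_\pm,t)| \le \|\na v_\pm\|_{L^\infty}|W_\pm(X_\pm,t)|$, together with the reverse inequality with a minus sign. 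Grönwall then yields \eqref{est:Linftynaphi} immediately. Because the flow map carries $\p\Omega_{\pm,0}$ onto $\p\Omega_\pm(t)$, the lower bound on the trajectories gives \eqref{est:infnaphi}.

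For the Hölder seminorm \eqref{est:Calphanaphi}, we use the standard fact that for a transported quantity with forcing $f = (\na v_\pm)W_\pm$,
\begin{equation*}
\frac{d}{dt}|W_\pm|_\alpha \le \alpha\|\na v_\pm\|_{L^\infty}|W_\pm|_\alpha + |f|_\alpha .
\end{equation*}
This can be obtained either by taking difference quotients along two trajectories, or by the estimate $|g\circ X^{-1}|_\alpha \le |g|_\alpha \|\na X^{-1}\|_{L^\infty}^\alpha$ applied to the Duhamel formula. We then split $\na v_\pm = \pm\na\U(\sigma_\pm) \mp \na\bbS(F)$.

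For the singular part, we apply \eqref{est:HoldernaUW} with $W = W_\pm$, which is legitimate because $W_\pm$ is divergence-free and tangential to $\p\Omega_\pm$. This gives the bound $C_0\|\na\U(\sigma_\pm)\|_{L^\infty}|W_\pm|_\alpha$.

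For the regular part, we apply Corollary \eqref{est:HoldernaSW}, which requires only that $W$ be divergence-free. This gives the bound $C_\alpha(|a_+|L_++|a_-|L_-)(\|W_\pm\|_{L^\infty}+|W_\pm|_\alpha)$.

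Likewise, in the $\alpha\|\na v_\pm\|_{L^\infty}$ term we bound $\|\na v_\pm\|_{L^\infty} \le \|\na \U(\sigma_\pm)\|_{L^\infty} + C_\alpha(|a_+|L_++|a_-|L_-)$ using \eqref{est:SF2}. Collecting terms gives
\begin{equation*}
\frac{d}{dt}|W_\pm|_\alpha \le \big((C_0+1)\|\na\U(\sigma_\pm)\|_{L^\infty} + 2\bar C\big)|W_\pm|_\alpha + \bar C\,\|W_\pm\|_{L^\infty}.
\end{equation*}
The constant $\bar C$ collects the constants coming from the $\bbS$ estimates.

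The main obstacle is the inhomogeneous term $\bar C\|W_\pm\|_{L^\infty}$. Inserting \eqref{est:Linftynaphi} directly would bring in $\int\|\na v_\pm\|$, which is the wrong form. Instead we note that $\|W_\pm(t)\|_{L^\infty} \le \|W_\pm(0)\|_{L^\infty}\exp(\int_0^t\|\na v_\pm\|)$, and that the exponent is dominated by the same integrating factor $\exp(2\bar C t + (C_0+1)\int_0^t\|\na \U\|)$, since $C_0+1\ge 1$ and $2\bar C \ge \bar C$. Multiplying by the integrating factor and integrating, this exponential cancels inside the time integral. The inhomogeneity therefore contributes $\bar C\|\na\varphi_\pm(0)\|_{L^\infty}t$ times the integrating factor, which is exactly the form of \eqref{est:Calphanaphi}.

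There is one remaining technical point: the application of \eqref{est:HoldernaUW} requires the tangency of $W_\pm$ at each time. This holds because $\Omega_\pm(t)$ is defined as the positivity set of $\varphi_\pm(\cdot,t)$ in \eqref{defn:Omegapm}, and the nondegeneracy $|\na\varphi_\pm|_{\inf}>0$ assumed on $[0,T)$ ensures that $\p\Omega_\pm(t)=\{\varphi_\pm=0\}$ is a $C^{1,\alpha}$ curve.
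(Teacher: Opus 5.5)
Your proposal is correct and is essentially the paper's proof. The same Lagrangian bounds give \eqref{est:Linftynaphi}--\eqref{est:infnaphi}, and the H\"older estimate comes from the same Bertozzi--Constantin bound for the forced transport of $W_\pm=\na^\perp\varphi_\pm$, combined with \eqref{est:HoldernaUW} and \eqref{est:HoldernaSW}.

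The only difference is cosmetic. You replace $\alpha\|\na v_\pm\|_{L^\infty}$ by $\|\na v_\pm\|_{L^\infty}$ at the outset, so the $\|W_\pm\|_{L^\infty}$ forcing cancels directly against the integrating factor. The paper instead keeps the weight $\alpha$, applies Gr\"onwall to $|W_\pm|_\alpha e^{-\alpha\int_0^t\|\na v_\pm\|_{L^\infty}}$, and recombines the factors $e^{\alpha\int\|\na v_\pm\|}$ and $e^{(1-\alpha)\int\|\na v_\pm\|}$ at the end. Your differential inequality should be read in its integrated (Duhamel) form, as you indicate.
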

\begin{proof}
    From \eqref{eq:phipm}, we see that $W_\pm :=\na^\perp \varphi_\pm$ satisfies
    \begin{equation}
        \label{eq:Wpm}
        \pt W_\pm + v_\pm \cdot \na W_\pm = \na v_\pm W_\pm.
    \end{equation}
    Then estimates \eqref{est:Linftynaphi} and \eqref{est:infnaphi} follow on the Lagrangian paths of $v_\pm$. We then focus on the proof of \eqref{est:Calphanaphi}. By following an identical computation to that in \cite{bc}, the following estimate holds:
    \begin{equation}
        \label{est:Calphanaphiaux1}
        \begin{aligned}
        |W_\pm(\cdot,t)|_\alpha &\le |W_\pm(\cdot,0)|_\alpha \exp\left(\alpha\int_0^t \|\na v_\pm(\cdot,s)\|_{L^\infty} ds\right)\\
        &\quad+ \int_0^t |\na v_\pm W_\pm|_\alpha \exp\left(\alpha \int_s^t \|\na v_\pm(\cdot,s')\|_{L^\infty} ds'\right)ds.
        \end{aligned}
    \end{equation}
    Denoting $Q_\pm(s) = \|\na v_\pm(\cdot,s)\|_{L^\infty}$ and using estimates \eqref{est:HoldernaUW}, \eqref{est:HoldernaSW}, we further have
    \begin{align*}
        |W_\pm(\cdot,t)|_\alpha &\le |W_\pm(\cdot,0)|_\alpha \exp\left(\alpha\int_0^t Q_\pm(s) ds\right)\\
        &\quad+ \int_0^t (\bar{C}(a_\pm, L_\pm, \alpha) + C_0 \|\na \U(\sigma_\pm)\|_{L^\infty})|W_\pm(\cdot,s)|_\alpha \exp\left(\alpha \int_s^t Q_\pm(s') ds'\right) ds\\
        &\quad + \int_0^t (\bar{C}(a_\pm, L_\pm, \alpha)|W_\pm(\cdot,s)|_{L^\infty}) \exp\left(\alpha \int_s^t Q_\pm(s') ds'\right) ds,
    \end{align*}
    where $\bar{C}(a_\pm, L_\pm, \alpha) := C_\alpha (|a_+| L_+ + |a_-| L_-)$.

    Multiplying on both sides by $\exp\left(-\alpha \int_0^t Q_\pm(s) ds\right)$ and defining 
    $$G_\pm(t) := |W_\pm(\cdot,t)|_\alpha\exp\left(-\alpha \int_0^t Q_\pm(s) ds\right),$$ 
    we deduce that 
    \begin{align*}
        G_\pm(t) &\le |W_\pm(\cdot,0)|_\alpha + \int_0^t (\bar{C}(a_\pm, L_\pm, \alpha) + C_0 \|\na \U(\sigma_\pm)\|_{L^\infty}) G_\pm(s) ds\\
        &\quad + \int_0^t (\bar{C}(a_\pm, L_\pm, \alpha)|W_\pm(\cdot,s)|_{L^\infty}) \exp\left(-\alpha \int_0^s Q_\pm(s') ds'\right) ds \\
        &\le |W_\pm(\cdot,0)|_\alpha + \int_0^t (\bar{C}(a_\pm, L_\pm, \alpha) + C_0 \|\na \U(\sigma_\pm)\|_{L^\infty}) G_\pm(s) ds\\
        &\quad + \bar{C}(a_\pm, L_\pm, \alpha)\|W_\pm(\cdot,0)\|_{L^\infty}\int_0^t  \exp\left((1-\alpha) \int_0^s Q_\pm(s') ds'\right) ds,
    \end{align*}
    where we used \eqref{est:Linftynaphi} in the last inequality above. Now applying Gr\"onwall inequality, we arrive at
    \begin{align*}
        G_\pm(t) &\le\left[|W_\pm(\cdot,0)|_\alpha + \bar{C}(a_\pm, L_\pm, \alpha)\|W_\pm(\cdot,0)\|_{L^\infty}\int_0^t  \exp\left((1-\alpha) \int_0^s Q_\pm(s') ds'\right) ds\right]\\
        &\quad \times \exp \left(\bar{C}(a_\pm, L_\pm, \alpha)t + C_0 \int_0^t \|\na \U(\sigma_\pm)(\cdot,s)\|_{L^\infty}ds\right),
    \end{align*}
    Multiplying on both sides by $\exp\left(\alpha \int_0^t Q_\pm(s) ds\right)$, we obtain the following estimate:
    \begin{align*}
        &|\na\varphi_\pm(\cdot,t)|_\alpha \le\\
        & \left[|W_\pm(\cdot,0)|_\alpha + \bar{C}(a_\pm, L_\pm, \alpha)\|W_\pm(\cdot,0)\|_{L^\infty}\int_0^t  \exp\left((1-\alpha) \int_0^s Q_\pm(s') ds'\right) ds\right]\\
        &\quad\times \exp \left(\bar{C}(a_\pm, L_\pm, \alpha)t + C_0 \int_0^t \|\na \U(\sigma_\pm)(\cdot,s)\|_{L^\infty}ds +\alpha \int_0^t\|\na v_\pm(\cdot,s)\|_{L^\infty} ds\right).
    \end{align*}
    Recall that $\na v_\pm = \pm\na \U(\sigma_\pm) \mp \na \bbS(F)$ and the bound \eqref{est:SF2}, we have
    $$
    \|\na v_\pm\|_{L^\infty} \le \|\na\U(\sigma_\pm)\|_{L^\infty} + \bar{C}(a_\pm, L_\pm, \alpha).
    $$
    Moreover, we have the bound
    $$
    \int_0^t  \exp\left((1-\alpha) \int_0^s Q_\pm(s') ds'\right) ds \le t\exp\left((1-\alpha) \int_0^t \|\na v_\pm(\cdot,s)\|_{L^\infty} ds\right).
    $$
    Combining the above three estimates, we finally arrive at
    \begin{align*}
        &|\na\varphi_\pm(\cdot,t)|_\alpha \le \left[|W_\pm(\cdot,0)|_\alpha + \bar{C}(a_\pm, L_\pm, \alpha)\|W_\pm(\cdot,0)\|_{L^\infty}t\right]\\
        &\quad\times \exp \left(\bar{C}(a_\pm, L_\pm, \alpha)t + C_0 \int_0^t \|\na \U(\sigma_\pm)(\cdot,s)\|_{L^\infty}ds +\int_0^t\|\na v_\pm(\cdot,s)\|_{L^\infty} ds\right)\\
        &\le \left[|W_\pm(\cdot,0)|_\alpha + \bar{C}(a_\pm, L_\pm, \alpha)\|W_\pm(\cdot,0)\|_{L^\infty}t\right]\\
        &\quad\times \exp \left(2\bar{C}(a_\pm, L_\pm, \alpha)t + (C_0+1) \int_0^t \|\na \U(\sigma_\pm)(\cdot,s)\|_{L^\infty}ds\right),
    \end{align*}
    which yields \eqref{est:Calphanaphi}.
\end{proof}
Using a Gr\"onwall argument by invoking Proposition \ref{prop:evolution} and Proposition \ref{prop:naU}, Theorem \ref{thm:C1alpha} is proved.

\subsection{Propagation of higher regularity}
With the global-in-time propagation of $C^{1,\alpha}$ regularity established in the previous subsection, we now concern ourselves with the propagation of higher regularity. For the sake of clarity, and without loss of generality we take  $a_\pm = 1$.

To start with, we observe that Theorem \ref{thm:C1alpha} enables us to use the following Contour Dynamics Equation (see \cite{chu}): for all $t \ge 0$,
\begin{equation}
    \label{eq:LScontour}
    \begin{aligned}
        \pa_t z_+(\alpha,t) &= \frac{1}{2\pi}\int_0^{2\pi} \log|z_+(\alpha) - z_+(\beta)| \frac{\pa z_+}{\pa \beta}(\beta) d\beta\\
    &\qquad- \frac{1}{4\pi} \int_0^{2\pi} \tilde G(|z_+(\alpha) - z_+(\beta)|)\frac{\pa z_+}{\pa \beta}(\beta) d\beta\\
    &\qquad\qquad - \frac{1}{4\pi} \int_0^{2\pi} \tilde G(|z_+(\alpha) - z_-(\beta)|)\frac{\pa z_-}{\pa \beta}(\beta) d\beta,\\
    \pa_t z_-(\alpha,t) &= -\frac{1}{2\pi}\int_0^{2\pi} \log|z_-(\alpha) - z_-(\beta)| \frac{\pa z_-}{\pa \beta}(\beta) d\beta\\
    &\qquad+ \frac{1}{4\pi} \int_0^{2\pi} \tilde G(|z_-(\alpha) - z_+(\beta)|)\frac{\pa z_+}{\pa \beta}(\beta) d\beta\\
    &\qquad\qquad + \frac{1}{4\pi} \int_0^{2\pi} \tilde G(|z_-(\alpha) - z_-(\beta)|)\frac{\pa z_-}{\pa \beta}(\beta) d\beta,
    \end{aligned}
\end{equation}
equipped with initial parametrization $z_\pm(\alpha,0) = z_{\pm,0}(\alpha)$, $\alpha \in [0,2\pi]$. Here, the kernel $\tilde G: [0,\infty) \to \R$ defined by
\begin{equation}
    \tilde G(r) = \log r + K_0(r) - (\log 2 - \gamma_e),
\end{equation}
where $K_0$ is the modified Bessel function and $\gamma_e$ is the Euler's constant, enjoys the following properties: let $x \in \R^2$ and denote $r = |x|$.
\begin{enumerate}
    \item (Far-field properties) $\tilde G \in C^\infty([1,\infty))$. Moreover, the following estimates hold: for $r \ge 1$,
    \begin{equation}
        \label{tildeGfar}
        \begin{aligned}
            |\tilde G(r)| \le C_0 (1+\log r),\\
            |\na_x^j \tilde G(r)| \le \frac{C_j}{r^j},\quad j \in \mathbb{N},
        \end{aligned}
    \end{equation}
    where $C_0, C_j$ are universal constants.

    \item (Near-field properties) For $r \in (0,1)$, the following estimates hold:
    \begin{equation}
        \label{tildeGnear}
        \begin{aligned}
            |\tilde G(r)| \le C_0 r^2 (1+|\log r|),\quad |\na \tilde G(r)| \le C_1 r(1 + |\log r|),\\
            |\na^2 \tilde G(r)| \le C_2 (1 + |\log r|),\quad |\na^3 \tilde G(r)| \le C_3 \frac{1}{r}.
        \end{aligned}
    \end{equation}
    In particular, $\na \tilde G(r)$ is Log-Lipschitz.
\end{enumerate}
The bounds \eqref{tildeGfar} and \eqref{tildeGnear} are known properties of the Bessel potential (see e.g. \cite{bessel1}). Finally, we introduce the arc-chord quantity $$|z|_* := \inf_{\beta \neq \beta'} \left|\frac{z(\beta) - z(\beta')}{\beta - \beta'}\right|.$$

We dedicate the rest of the section to proving the following main result:

\begin{thm}
    \label{thm:C2alpha}
    Fix $\gamma \in (0,1)$. Assume that $z_{\pm,0} \in C^{2,\gamma}(\T)$, $|z_{\pm,0}|_* \ge m > 0$, and $|z_{\pm,0}'| \le M < \infty$. Then the system \eqref{eq:LScontour} admits a unique solution $z_\pm(\alpha,t) \in C^{2,\gamma}(\T)$ for all $t \ge 0$. In particular, for any $T > 0$, there exists $C(T)$ depending only on initial data, $T$, $m$, $M$, and $\gamma$ that 
    \begin{equation}
        \label{est:C2alphamain}
        \sup_{t \in [0,T]} (\|z_\pm(\cdot,t)\|_{C^{2,\gamma}(\T)} + |z_\pm|_*^{-1}  + |z_\pm'|_*)\le C(T).
    \end{equation}
\end{thm}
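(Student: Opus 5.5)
We will prove Theorem \ref{thm:C2alpha} in three stages: local well-posedness in $C^{2,\gamma}$, a priori bounds on the arc-chord and first-derivative quantities, and a continuation argument for the $C^{2,\gamma}$ norm.

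\emph{Local existence.} We view \eqref{eq:LScontour} as an ODE $\pa_t z = \mathcal F(z)$ on the open set
\[
\mathcal O = \{(z_+,z_-) \in C^{2,\gamma}(\T)^2 : |z_\pm|_*>0\}.
\]
We show $\mathcal F$ is locally Lipschitz on $\mathcal O$ and apply Picard's theorem. The logarithmic self-interaction terms are handled exactly as for Euler vortex patches in contour dynamics: after differentiating twice in $\alpha$, the top-order contribution is a Hilbert-transform-type singular integral of $\pa_\beta^3 z_\pm$. The derivative is moved onto the kernel by integration by parts, using $\int_0^{2\pi}\pa_\beta(\cdot)\,d\beta = 0$ for periodic quantities. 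The arc-chord condition then controls the kernel in terms of $|z_\pm|_*^{-1}$ and $\|z_\pm\|_{C^{2,\gamma}}$.

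The $\tilde G$ terms, including the cross terms $z_+$ vs.\ $z_-$ (which may intersect), are smoother. By \eqref{tildeGnear}, $\na^2\tilde G$ has only a logarithmic singularity and $\na^3\tilde G = O(1/r)$. Two $\alpha$-derivatives therefore produce at worst $\na^2 \tilde G$ times $C^1$ quantities, plus $\na\tilde G \cdot \pa^2 z$, all integrable independently of how $z_+$ and $z_-$ meet. The $C^\gamma$ seminorm of the second derivative is obtained from $|\na^3\tilde G|\le C/r$ by the standard splitting $|\beta-\alpha|\lessgtr|\alpha-\alpha'|$, together with \eqref{tildeGfar} for $r\ge 1$. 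Crucially, no arc-chord condition between $z_+$ and $z_-$ is needed, which is why regularity survives merger. Uniqueness follows from the same Lipschitz estimates.

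\emph{A priori control of $|z_\pm|_*$ and $z_\pm'$.} For the parametrization evolving by \eqref{eq:LScontour}, the tangential ambiguity differs from the Lagrangian flow only by a reparametrization. We either reduce to the Lagrangian parametrization $z_\pm(\alpha,t) = X_\pm(z_{\pm,0}(\alpha),t)$, with $X_\pm$ the flow of $v_\pm$, or compare with it directly. Theorem \ref{thm:C1alpha} gives $\|\na v_\pm\|_{L^\infty}\le Ce^{Ct}$, so
\[
|z_\pm'(t)|\le M\exp\Big(\int_0^t\|\na v_\pm\|_{L^\infty}\,ds\Big),\qquad |z_\pm|_*\ge m\exp\Big(-\int_0^t\|\na v_\pm\|_{L^\infty}\,ds\Big).
\]
The second bound uses the bi-Lipschitz property of the flow. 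Both are bounded on $[0,T]$.

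\emph{Higher-order bound.} We differentiate \eqref{eq:LScontour} twice and estimate $\frac{d}{dt}\|\pa^2_\alpha z_\pm\|_{C^\gamma}$. The target is an inequality linear in the top norm, with coefficients controlled by the already bounded quantities $\|\na v_\pm\|_{L^\infty}$, $|z_\pm|_*^{-1}$, $\|z_\pm'\|_{L^\infty}$, and $\|z_\pm\|_{C^{1,\gamma}}$ (the last from Theorem \ref{thm:C1alpha}):
\[
\frac{d}{dt}\|z_\pm\|_{C^{2,\gamma}}\le C(T)\big(1+\|z_\pm\|_{C^{2,\gamma}}\big).
\]
For the $\tilde G$ parts this is the estimate from the local step, which is linear since $\tilde G$ is smoothing. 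For the Euler part we use the known linear estimate for the Euler contour equation, e.g.\ by writing the equation in terms of $\pa_\alpha^2 z$ and commuting as in Bertozzi--Constantin.

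The main obstacle is precisely this top-order Euler term. Naively it gives $\|z\|_{C^{2,\gamma}}$ times a constant depending on $\|z\|_{C^{1,\gamma}}$ and $|z|_*^{-1}$, which is acceptable. The risk is quadratic terms, which we avoid by placing at most one factor at the highest regularity in each commutator term. Grönwall's inequality then yields \eqref{est:C2alphamain}, and the continuation criterion from the local theory gives global existence.
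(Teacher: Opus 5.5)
Your a priori estimates follow the paper's proof. You control the arc-chord and $|z_\pm'|$ through $\|\na v_\pm\|_{L^\infty}\le Ce^{Ct}$ (as in Proposition~\ref{prop:arc}) and use Bertozzi--Constantin for $\calF_1$. For the smoothing parts you use the same structure, namely $\pa_\alpha^2\calF_2$ and $\pa_\alpha^2\calF_3$ written through $\calI_1^\pm(z_+(\alpha);\pa_\alpha^2 z_+)$ and $\calI_2^\pm(z_+(\alpha);\pa_\alpha z_+,\pa_\alpha z_+)$, with $\na^2\tilde G$ log-singular and $\na^3\tilde G=O(1/r)$, and you close with a linear Gr\"onwall inequality. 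One correction there: for the cross term $\calF_3$, the near/far splitting must be done in the spatial distance $|z_+(\alpha)-z_-(\beta)|$ against $|z_+(\alpha)-z_+(\alpha')|$, using $|\{\beta: |z-z_-(\beta)|<r\}|\le 4r/|z_-|_*$ (the paper's sets $E^\pm_r(z)$). A splitting in $|\beta-\alpha|$ means nothing when $\alpha$ and $\beta$ parametrize different curves.

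The step that fails is your local theory. $\mathcal F$ is not locally Lipschitz on $\mathcal O\subset C^{2,\gamma}(\T)^2$ at any configuration where $z_+$ crosses $z_-$ transversally, i.e.\ exactly in the merged regime. Write $\calF_3(\alpha)=S(z_+(\alpha))$ with $S(x)=-\frac{1}{4\pi}\int_0^{2\pi}\tilde G(|x-z_-(\beta)|)\pb z_-(\beta)\,d\beta$. By Green's theorem $S$ is a multiple of $\bbS(\chi_{\Omega_-})$, so $\na^\perp\cdot S$ is a multiple of $(\mathbb I-\Delta)^{-1}\chi_{\Omega_-}$. Hence $\Delta(\na^\perp\cdot S)$ jumps across $\p\Omega_-$: $S\in C^{2,1}$, but $\na^3 S$ is discontinuous there.

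Now translate $z_+\mapsto z_++\eps n$, with $n$ the normal to $\p\Omega_-$ at a crossing point. This perturbation has $C^{2,\gamma}$ size $\eps$ and leaves $\calF_1,\calF_2$ unchanged, and the term $\na S(z_+)\pa_\alpha^2 z_+$ changes only by $O(\eps)$. However, $\na^2S(z_+)[\pa_\alpha z_+,\pa_\alpha z_+]$ changes by roughly $\eps\,\na^3 S\cdot n$, which has different limits on the two sides of the crossing, with the transition over an $\alpha$-interval of length $O(\eps)$. So $|\pa_\alpha^2\calF_3|_{C^\gamma}$ changes by $\gtrsim\eps^{1-\gamma}$, and the map is only $(1-\gamma)$-H\"older. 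This is the usual loss for composition operators: $z\mapsto S\circ z$ is Lipschitz on $C^{2,\gamma}$ only when $\na^3 S$ is itself H\"older.

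Consequently, Picard in $C^{2,\gamma}$, uniqueness ``from the same Lipschitz estimates'', and the continuation criterion built on them all break down precisely where the theorem has content. The paper builds no $C^{2,\gamma}$ local theory. It works along the global $C^{1,\gamma}$ solution of Theorem~\ref{thm:C1alpha}, with its Lagrangian parametrization, and proves \eqref{est:C2alphamain} as an a priori estimate along it. You should restructure the same way. Run the fixed-point and uniqueness arguments in $C^{1,\gamma}$, where $z_+\mapsto S\circ z_+$ is Lipschitz because $\na S\in C^{1,1}$. Then use your linear $C^{2,\gamma}$ inequality only to propagate regularity.
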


We start with showing the following bounds concerning a global arc-chord control and patch perimeter for $\sigma_\pm$ patches, which are consequences of the $C^{1,\alpha}$ theory. Such estimates are known for Euler patches, and we record them here for the sake of completeness.

\begin{prop}\label{prop:arc}
    For $\gamma \in (0,1)$, let $\sigma_\pm = a_\pm\chi_{\Omega_\pm(t)}$ be the global $C^{1,\gamma}$ patch solution to the left-handed model \eqref{eq:LS}. Let $\varphi_\pm \in C^{1,\gamma}$ be the patch-defining scalars corresponding to $\sigma_\pm$ satisfying \eqref{eq:phipm}. Define $\lambda_\pm(t) := |\p \Omega_\pm(t)|$ to be the perimeter of the $\sigma_\pm$ patches. For any $T > 0$, there exists $C(T) > 0$ depending only on initial data,  $\gamma$, and $T$ such that
    \begin{equation}
        \label{est:perimeter}
        \sup_{t \in [0,T]}\lambda_\pm(t) \le C(T).
    \end{equation}
    Moreover, let $z_\pm: [0,2\pi] \to \R^2$ be a $C^{1,\gamma}$ Lagrangian parametrization of the boundaries to the $\sigma_\pm$ patches. For any $T > 0$, there exists $C(T) > 0$ depending only on initial data,  $\gamma$, and $T$ such that
    \begin{equation}
        \label{est:arcchord}
        \sup_{t \in [0,T]} |z_\pm|_*^{-1} \le C(T).
    \end{equation}
\end{prop}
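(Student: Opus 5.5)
The plan is to derive both bounds directly from the global $C^{1,\gamma}$ theory of Theorem \ref{thm:C1alpha}. That theorem gives, on $[0,T]$, the bounds $\|\na v_\pm\|_{L^\infty}\le C(T)$, $\|\na\varphi_\pm\|_{L^\infty}\le C(T)$, $|\na\varphi_\pm|_\gamma\le C(T)$ and $|\na\varphi_\pm|_{\inf}\ge c(T)>0$. The two estimates rest on different mechanisms. The perimeter bound comes from the Lipschitz bound on the flow map. The arc-chord bound comes from the same flow map bound applied in both directions.

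\textbf{Perimeter.} Let $X_\pm(\cdot,t)$ be the flow map of $v_\pm$, and write $z_\pm(\beta,t)=X_\pm(z_{\pm,0}(\beta),t)$. Then $\p_\beta z_\pm(\beta,t)=\na X_\pm(z_{\pm,0}(\beta),t)\,z_{\pm,0}'(\beta)$. Differentiating the ODE $\dot X=v_\pm(X)$ in space and applying Gr\"onwall gives
\[
\|\na X_\pm(\cdot,t)\|_{L^\infty}\le \exp\Big(\int_0^t\|\na v_\pm(\cdot,s)\|_{L^\infty}ds\Big)\le C(T),
\]
where the last inequality uses \eqref{est:navpminfty}. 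Hence $\lambda_\pm(t)=\int_0^{2\pi}|\p_\beta z_\pm|\,d\beta\le C(T)\lambda_\pm(0)$, which is \eqref{est:perimeter}. Alternatively, the tangent vector field $W_\pm=\na^\perp\varphi_\pm$ evolves by \eqref{eq:Wpm}, so \eqref{est:Linftynaphi} and \eqref{est:infnaphi} give the same conclusion if the boundary is parametrized through $W_\pm$.

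\textbf{Arc-chord.} Since $v_\pm$ is divergence free and Lipschitz, $X_\pm(\cdot,t)$ is a bi-Lipschitz homeomorphism of $\R^2$. The inverse map $X_\pm^{-1}(\cdot,t)$ is the time-$t$ flow of the backward field, so it satisfies the same Gr\"onwall bound. For all $x,y$ this yields
\[
|X_\pm(x,t)-X_\pm(y,t)|\ge e^{-\int_0^t\|\na v_\pm\|_{L^\infty}}|x-y|.
\]
Applying this with $x=z_{\pm,0}(\beta)$ and $y=z_{\pm,0}(\beta')$ gives
\[
\frac{|z_\pm(\beta,t)-z_\pm(\beta',t)|}{|\beta-\beta'|}\ge C(T)^{-1}\frac{|z_{\pm,0}(\beta)-z_{\pm,0}(\beta')|}{|\beta-\beta'|}\ge C(T)^{-1}|z_{\pm,0}|_*,
\]
and therefore $|z_\pm(t)|_*^{-1}\le C(T)|z_{\pm,0}|_*^{-1}$, which is \eqref{est:arcchord}. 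The statement requires the initial parametrization to be $C^{1,\gamma}$ with $|z_{\pm,0}|_*>0$. This holds because $\p\Omega_{\pm,0}$ is a compact $C^{1,\gamma}$ curve, by the nondegeneracy \eqref{nondegeneracy} and the implicit function theorem. A Jordan curve with a nonvanishing $C^1$ tangent has a positive arc-chord constant, by compactness together with injectivity. Near the diagonal, the difference quotient tends to $|z_{\pm,0}'|>0$.

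\textbf{Main obstacle.} The delicate point is justifying the flow map estimates when $v_\pm$ is only Lipschitz uniformly in time, and in particular checking that $\|\na v_\pm\|_{L^\infty}$ is finite across merger events. The cross term $\bbS(F)$ is smooth by \eqref{est:SF2}, independently of the geometry of $\Omega_\mp$. Meanwhile $\U(\sigma_\pm)$ involves only the patch's own boundary and is controlled by Proposition \ref{prop:naU}. So transversal intersections of $\Omega_+$ and $\Omega_-$ cause no loss, and the constants $C(T)$ depend only on the quantities listed in Theorem \ref{thm:C1alpha}.
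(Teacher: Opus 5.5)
Your proposal is correct and follows essentially the paper's argument. The arc-chord bound via the lower Lipschitz bound of the flow map is exactly the paper's Gr\"onwall inequality for $|z_\pm(\beta,t)-z_\pm(\beta',t)|^2$; working pointwise in $(\beta,\beta')$ before taking the infimum is slightly cleaner than differentiating $|z_\pm|_*^2$ directly. Your perimeter bound $\lambda_\pm(t)\le e^{\int_0^t\|\na v_\pm\|_{L^\infty}ds}\,\lambda_\pm(0)$ rests on the same linearized-flow Gr\"onwall estimate the paper uses, and simply skips its gauge choice $\pb z_\pm=\na^\perp\varphi_\pm(z_\pm)$ together with the bound on $\|\na\varphi_\pm\|_{L^\infty}$.
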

\begin{proof}
    Without loss of generality, we take $a_\pm = 1$. Fix $T > 0$. We start with proving \eqref{est:perimeter}. Since the scaling $\varphi_\pm \mapsto \mu_\pm \varphi_\pm$, $\mu_\pm > 0$, holds the patch boundary invariant, we may choose $\mu_\pm > 0$ appropriately such that the following gauge choice is valid: we select the initial parametrization $z_\pm: [0,2\pi] \to \R^2$ to be such that
    \begin{equation}
    \partial_\beta z_\pm(\beta, 0) = \nabla^\perp \varphi_\pm(z_\pm(\beta,0), 0).
    \end{equation}
    Note that this is indeed a valid regular parametrization since $|\na \varphi_\pm(\cdot,0)|_{\inf} > 0$. We claim that for any $t \ge 0$,
    \begin{equation}\label{perimeteraux1}
    \partial_\beta z_\pm(\beta, t) = \nabla^\perp \varphi_\pm(z_\pm(\beta,t), t).
    \end{equation}
    Indeed, on one hand, as $\pt z_\pm (\beta, t) = v_\pm(z_\pm(\beta,t),t)$, differentiating in $\beta$ we obtain
    $$
    \pt \pb z_\pm(\beta,t) = \na v_\pm(z_\pm(\beta,t),t) \pb z_\pm(\beta,t).
    $$
    On the other hand, from \eqref{eq:phipm} we have
    $$
    (\pt + v_\pm \cdot\na) \na^\perp \varphi_\pm = \na v_\pm \na^\perp \varphi_\pm,
    $$
    from which we obtain
    $$
    \pt \na^\perp \varphi_\pm(z_\pm(\beta,t),t) = \na v_\pm (z_\pm(\beta,t),t) \na^\perp \varphi_\pm (z_\pm(\beta,t),t).
    $$
    Then the claim follows from uniqueness for ODE. Hence,
    $$
    \lambda_\pm(t) = \int_0^{2\pi} |\pb z_\pm(\beta,t)|d\beta \le 2\pi \|\na \varphi_\pm\|_{L^\infty},
    $$
    which concludes the proof of \eqref{est:perimeter} after using \eqref{est:naphipminfty}, which implies that \\$\sup_{t \in [0,T]}\|\na \varphi_\pm(\cdot,t)\|_{L^\infty} < C(T)$ for some $C(T) > 0$.

    To show \eqref{est:arcchord}, we fix $\beta, \beta' \in [0,2\pi]$ such that $\beta \neq \beta'$. We see that from $\pt z_\pm (\beta, t) = v_\pm(z_\pm(\beta,t),t)$ there is:
    \begin{align*}
    \frac12 &\frac{d}{dt} |z_\pm (\beta, t)-z_\pm (\beta', t)|^2 \\
    &= (z_\pm (\beta, t)-z_\pm (\beta', t))\cdot (v_\pm(z_\pm(\beta,t),t)-v_\pm(z_\pm(\beta',t),t))\\
    &\ge - \|\na v_\pm (\cdot, t)\|_{L^\infty} |z_\pm (\beta, t)-z_\pm (\beta', t)|^2.
    \end{align*}
    From which we obtain
    $$
    \frac12 \frac{d}{dt} |z_\pm|_*^2 \ge -\|\na v_\pm (\cdot, t)\|_{L^\infty}|z_\pm|_*^2.
    $$
    The above differential inequality plus \eqref{est:navpminfty} yields \eqref{est:arcchord}.
\end{proof}

For convenience, we introduce the following multilinear maps: given bounded vectors $a, b$,
\begin{equation}
    \label{defn:I1}
    \calI_1^\pm(z;a) := \int_0^{2\pi} \na^i \tilde G(|z - z_\pm(\beta)|)a_i \pb z_\pm(\beta) d\beta,
\end{equation}
\begin{equation}
    \label{defn:I2}
    \calI_2^\pm (z;a,b) := \int_0^{2\pi} (\na^2 \tilde G(|z - z_\pm(\beta)|))^{ij}a_ib_j (\pb z_\pm(\beta) )d\beta,
\end{equation}
We first show the following  lemma.
\begin{lemma}
    Assume that $z_\pm \in C^1(\T)$, $|z|_* > 0$, and $a, b$ are bounded vectors. Then both maps $z \mapsto \calI_1^\pm(z;a)$ and $z \mapsto \calI_2^\pm(z;a,b)$ are uniformly bounded and Log-Lipschitz. In particular, the following bounds hold: for any $z \in \R^2$, there exists some universal constant $C > 0$ such that
    \begin{equation}
        \label{est:I1bound}
        |\calI_1^\pm(z;a)| \le C |a| \lambda_\pm(t),
    \end{equation}
    and
    \begin{equation}
        \label{est:I2bound}
        |\calI_2^\pm(z;a,b)| \le C(\|z_\pm\|_{C^1}, |z_\pm|_*)|a||b|,
    \end{equation}
    Moreover, for $|z_1 - z_2| < 1$, 
    \begin{equation}
        \label{est:I1LogLip}
        |\calI_1^\pm(z_1;a) - \calI_1^\pm(z_2;a)| \le C\lambda_\pm(t) |a| |z_1- z_2| \left(1+|\log |z_1 - z_2||\right),
    \end{equation}
    and
    \begin{equation}
        \label{est:I2LogLip}
        |\calI_2^\pm(z_1;a,b) - \calI_2^\pm(z_2;a,b)| \le C(\|z_\pm\|_{C^1}, |z_\pm|_*) |a||b| |z_1 - z_2|\left(1+|\log |z_1 - z_2||\right).
    \end{equation}
\end{lemma}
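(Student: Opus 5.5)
The plan is to rely only on the pointwise bounds \eqref{tildeGfar} and \eqref{tildeGnear} for $\nabla\tilde G$ and $\nabla^2\tilde G$. I will split the $\beta$-integral into a near part, where $|z-z_\pm(\beta)|<1$, and a far part. For \eqref{est:I1bound} the estimate is immediate. On both regions $|\nabla\tilde G|$ is uniformly bounded: near the curve it is at most $C_1 r(1+|\log r|)\le C$, and far from it at most $C_1/r\le C_1$. Hence
\[
|\calI_1^\pm(z;a)|\le C|a|\int_0^{2\pi}|\pb z_\pm|\,d\beta=C|a|\lambda_\pm(t).
\]
For \eqref{est:I2bound}, $\nabla^2\tilde G$ is bounded on the far region. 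On the near region only $C_2(1+|\log r|)$ is available. To control it I use the arc-chord bound $|z_\pm(\beta)-z_\pm(\beta')|\ge|z_\pm|_*|\beta-\beta'|$ and the fact that $|\pb z_\pm|\le\|z_\pm\|_{C^1}$.

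The key observation for the near region is as follows. Take $\beta_0$ with $|z-z_\pm(\beta_0)|=d:=\operatorname{dist}(z,\p\Omega_\pm)$. By the triangle inequality,
\[
|z-z_\pm(\beta)|\ge \tfrac12|z_\pm(\beta)-z_\pm(\beta_0)|\ge \tfrac12|z_\pm|_*|\beta-\beta_0|,
\]
since $|z-z_\pm(\beta)|\ge d$ and $|z_\pm(\beta)-z_\pm(\beta_0)|\le d+|z-z_\pm(\beta)|$. Consequently
\[
\int(1+|\log|z-z_\pm(\beta)||)\,|\pb z_\pm|\,d\beta\le \|z_\pm\|_{C^1}\int_0^{2\pi}\bigl(1+|\log(\tfrac12|z_\pm|_*|\beta-\beta_0|)|\bigr)\,d\beta,
\]
and the right-hand side is finite with a constant depending on $\|z_\pm\|_{C^1}$ and $|z_\pm|_*$. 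This proves \eqref{est:I2bound}.

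For the Log-Lipschitz estimates, fix $z_1,z_2$ with $\delta:=|z_1-z_2|<1$. I split the parameter set into $N=\{\beta: |z_1-z_\pm(\beta)|<2\delta\}$ and its complement. On $N$, each integrand is bounded separately. For $\calI_1$ the integrand is at most $C\delta(1+|\log\delta|)|\pb z_\pm|$, using $r(1+|\log r|)$ with $r\le3\delta$. For $\calI_2$ I again use the arc-chord argument above: $N$ is contained in a union of parameter intervals of total length $\lesssim\delta/|z_\pm|_*$, and integrating $\log$ over such a set gives $C\delta(1+|\log\delta|)$. On the complement of $N$, I apply the mean value theorem along the segment $[z_1,z_2]$. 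Every point $\xi$ of this segment satisfies $|\xi-z_\pm(\beta)|\ge\tfrac12|z_1-z_\pm(\beta)|$. For $\calI_1$ the difference is therefore bounded by
\[
\delta\int_{N^c}\sup_\xi|\nabla^2\tilde G(\xi-z_\pm(\beta))|\,|\pb z_\pm|\,d\beta .
\]
Here $|\nabla^2\tilde G|\le C(1+|\log r|)$ on $r<1$, and the far field is bounded. So the integral over $N^c$ is at most $C\lambda_\pm(1+|\log\delta|)$, since $r\ge\delta$ there; this requires no arc-chord input and yields \eqref{est:I1LogLip}. For $\calI_2$ the mean value theorem brings in $|\nabla^3\tilde G|\le C_3/r$ on $r<1$, and $C_3/r$ on $r\ge1$ by \eqref{tildeGfar}. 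The required estimate is
\[
\int_{\{|z_1-z_\pm(\beta)|\ge2\delta\}}\frac{|\pb z_\pm|\,d\beta}{|z_1-z_\pm(\beta)|}\le C(\|z_\pm\|_{C^1},|z_\pm|_*)(1+|\log\delta|).
\]
It follows from the arc-chord comparison with $\beta_0$ as above: the integrand is at most $C/(|z_\pm|_*|\beta-\beta_0|)$, and the domain excludes $|\beta-\beta_0|\lesssim\delta/\|z_\pm\|_{C^1}$. This gives \eqref{est:I2LogLip}.

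I expect the main obstacle to be this last step, namely the $1/r$ singularity of $\nabla^3\tilde G$ integrated along the curve. Its logarithmic bound relies essentially on the arc-chord condition, through comparing $|z-z_\pm(\beta)|$ with the distance to the nearest boundary point. One must also check that the far-field decay keeps all constants universal, apart from the dependence on $\lambda_\pm$, $\|z_\pm\|_{C^1}$ and $|z_\pm|_*$.
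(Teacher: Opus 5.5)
Your argument is correct in substance and follows the paper's scheme. You split in $\beta$ into near and far parts, use the arc-chord condition to bound the parameter measure of the part of the curve near $z$, and apply the mean value theorem away from the singularity with $|\nabla^3\tilde G|\lesssim 1/r$. The differences are in packaging. The paper proves $|E_r^\pm(z)|\le\min\{2\pi,4r/|z_\pm|_*\}$ and then uses a layer-cake formula and a dyadic sum; you instead use the pointwise bound $|z-z_\pm(\beta)|\ge\tfrac12|z_\pm|_*|\beta-\beta_0|$ with a nearest point $\beta_0$, which makes every integral an explicit one-dimensional one. You also use two regions instead of three, and you derive the Log-Lipschitz property of $\nabla\tilde G$ rather than quoting it.

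One sentence in your final step is wrong and needs correcting. You claim that $\{\beta: |z_1-z_\pm(\beta)|\ge 2\delta\}$ excludes the interval $|\beta-\beta_0|\lesssim\delta/\|z_\pm\|_{C^1}$. This fails whenever $\operatorname{dist}(z_1,\p\Omega_\pm)>2\delta$, because then that set is all of $\T$, $\beta_0$ included. The fix uses only what you already have: on that set the integrand is at most $1/(2\delta)$. Hence
\[
\frac{1}{|z_1-z_\pm(\beta)|}\le\min\Bigl\{\frac{1}{2\delta},\,\frac{2}{|z_\pm|_*\,|\beta-\beta_0|}\Bigr\}.
\]
Split at $|\beta-\beta_0|=4\delta/|z_\pm|_*$ and use $|z_\pm|_*\le\|z_\pm\|_{C^1}$. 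The integral over $\T$ is then at most $C|z_\pm|_*^{-1}\bigl(1+|\log\delta|+\log^+\|z_\pm\|_{C^1}\bigr)$, which is what \eqref{est:I2LogLip} needs. The paper's dyadic sum uses the lower cutoff $r>4\delta$ in exactly this way: it bounds $1/r$ on each shell rather than excluding a parameter interval.
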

\begin{proof}
    We begin with estimates concerning $\calI_1$. The bound \eqref{est:I1bound} follows from the fact that $\na \tilde G$ is a uniformly bounded kernel in view of \eqref{tildeGfar} and \eqref{tildeGnear}. To show \eqref{est:I1LogLip}, we observe that
    \begin{align*}
        &\calI_1^\pm(z_1;a) - \calI_1^\pm(z_2;a)\\
        &\quad= \int_0^{2\pi} \left[\na \tilde G(|z_1 - z_\pm(\beta)|) -\na \tilde G(|z_2 - z_\pm(\beta)|)\right]\cdot a \pb z_\pm(\beta) d\beta.
    \end{align*}
    Using \eqref{tildeGfar}, \eqref{tildeGnear}, and that $\na \tilde G(r)$ is Log-Lipschitz, we arrive at
    \begin{align*}
        |\calI_1^\pm(z_1;a) - \calI_1^\pm(z_2;a)| \le C |a| |z_1- z_2| (1+\left|\log|z_1 - z_2|\right|)  \lambda_\pm(t),\quad |z_1 - z_2| < 1.
    \end{align*}
    This completes the proof of \eqref{est:I1LogLip}. 

    To prove \eqref{est:I2bound}, we define the set $E_r^\pm(z) := \{\beta \in [0,2\pi] \;|\; |z - z_\pm(\beta)| < r\}$ given $z \in \R^2$, $r > 0$. Then we decompose $\calI_2^\pm$ as follows:
    $$
    \calI_2^\pm (z;a,b) = \left(\int_{E_1^\pm(z)} + \int_{\T\backslash E_1^\pm(z)}\right)(\na^2 \tilde G(|z - z_\pm(\beta)|))^{ij}a_ib_j (\pb z_\pm(\beta) )d\beta =: I_{1} + I_{2}.
    $$
    By \eqref{tildeGfar}, we note that $\na^2 \tilde G(r)$ is uniformly bounded for $r \ge 1$. Then it follows that
    $$
    |I_2| \le C |a| |b| \lambda_\pm(t).
    $$
    To estimate $I_1$, we observe from \eqref{tildeGnear} that
    \begin{align*}
        |I_1| \le C\|z_\pm\|_{C^1} |a||b| \int_{E_1^\pm(z)} \left(1 + \log \frac{1}{|z - z_\pm(\beta)|}\right) d\beta.
    \end{align*}
    To proceed, we claim the following bound for the set $E_r^\pm(z)$: for any $z \in \R^2$, $r > 0$,
    \begin{equation}
        \label{est:Erz}
        |E_r^\pm(z)| \le \min\left\{2\pi, \frac{4r}{|z_\pm|_*}\right\}.
    \end{equation}
    To show \eqref{est:Erz}, we consider  $E_r(z) \neq \emptyset$. We choose $\beta_0 \in E_r^\pm(z)$. Then for any $\beta \in E_r^\pm(z)$, we use the definition of $|z_\pm|_*$  to deduce that
    $$
    |z_\pm|_* |\beta - \beta_0| \le |z_\pm(\beta) - z_\pm(\beta_0)| \le |z - z_\pm(\beta)| + |z - z_\pm(\beta_0)| \le 2r.
    $$
    Hence, $E_r^\pm(z) \subset \{\beta \in \T\;|\; |\beta - \beta_0| \le \frac{2r}{|z_\pm|_*}\}$ and the claimed estimate \eqref{est:Erz} follows. With \eqref{est:Erz} and the identity
    $$
    \log \frac{1}{r} = \int_0^\infty \chi_{\{r < e^{-t}\}} dt,\quad r \in (0,1),
    $$
    we deduce that
    \begin{equation}\label{est:logintegral}
    \begin{aligned}
        \int_{E_1^\pm (z)} \log \frac{1}{|z - z_\pm(\beta)|} d\beta &= \int_{E_1^\pm (z)} \int_0^\infty\chi_{E^\pm_{e^{-t}}(z)} dt d\beta \le 2\pi \int_0^\infty \min \{2\pi, \frac{4 e^{-t}}{|z_\pm|_*}\} dt\\
        &\le C\left(1 + \log \left(1 + \frac{1}{|z_\pm|_*}\right)\right).
    \end{aligned}
    \end{equation}
    Combining all estimates above, we conclude that
    $$
    |I_1| \le C(\|z_\pm\|_{C^1}, |z_\pm|_*)|a||b|.
    $$
    This estimate together with that for $I_2$ yields the desired bound \eqref{est:I2bound}.

    We finally prove \eqref{est:I2LogLip}. We fix $z_1, z_2 \in \R^2$ and let $\delta = |z_1 - z_2|$. Without loss of generality, we assume $\delta$ to be sufficiently small. Note that
    \begin{align*}
        \calI_2^\pm(z_1;a,b) - \calI_2^\pm(z_2;a,b) &= \left(\int_{E_{4\delta}^\pm(z_1)} + \int_{E_{1}^\pm(z_1)\backslash E_{4\delta}^\pm(z_1)} + \int_{\T\backslash E_{1}^\pm(z_1)}\right) \\
        &\left(\na^2 \tilde G(|z_1 - z_\pm(\beta)|) - \na^2 \tilde G(|z_2 - z_\pm(\beta)|)\right)^{ij} a_ib_j \pb z_\pm (\beta)d\beta\\
        &=: J_1 + J_2 + J_3.
    \end{align*}
    To estimate $J_1$, we note that for any $\beta \in E_{4\delta}^\pm(z_1)$, we must also have $|z_2 - z_\pm(\beta)| \le 5\delta$. Then invoking \eqref{tildeGnear} we have
    \begin{align*}
        |J_1| &\le C|a| |b| \|z_\pm\|_{C^1}\bigg(\int_{E_{4\delta}^\pm(z_1)} \left(1 + \log \frac{1}{|z_1 - z_\pm(\beta)|}\right) d\beta\\
        &\quad + \int_{E_{5\delta}^\pm(z_2)} \left(1 + \log \frac{1}{|z_2 - z_\pm(\beta)|}\right) d\beta\bigg).
    \end{align*}
    To estimate the first integral in the parenthesis above, we have
    \begin{equation}\label{est:deltalog}
    \begin{aligned}
        \int_{E_{4\delta}^\pm(z_1)} &\left(1 + \log \frac{1}{|z_1 - z_\pm(\beta)|}\right) d\beta\\
        &\le (1+ \log \frac{1}{4\delta})|E_{4\delta}^\pm(z_1)| + \int_{E_{4\delta}^\pm(z_1)} \log \frac{4\delta}{|z_1 - z_\pm(\beta)|} d\beta\\
        &\le C(|z_\pm|_*) \delta \log \delta^{-1} + \int_{E_{4\delta}^\pm(z_1)} \log \frac{4\delta}{|z_1 - z_\pm(\beta)|} d\beta,
    \end{aligned}
    \end{equation}
    where we used the bound \eqref{est:Erz} in the final inequality above. To estimate the integral in \eqref{est:deltalog}, a similar argument to \eqref{est:logintegral} reveals the bound:
    $$
    \int_{E_{4\delta}^\pm(z_1)} \log \frac{4\delta}{|z_1 - z_\pm(\beta)|} d\beta \le C\frac{\delta}{|z_\pm|_*} \left(1 - \log\frac{\delta}{|z_\pm|_*}\right).
    $$
    Hence the above bound together with \eqref{est:deltalog} yields:
    $$
    \int_{E_{4\delta}^\pm(z_1)} \left(1 + \log \frac{1}{|z_1 - z_\pm(\beta)|}\right) d\beta \le C(|z_\pm|_*)\delta(1+ \log \delta^{-1}).
    $$
    A similar argument also yields the following bound:
    $$
    \int_{E_{5\delta}^\pm(z_2)} \log \frac{4\delta}{|z_2 - z_\pm(\beta)|} d\beta \le C(|z_\pm|_*)\delta(1+ \log \delta^{-1}).
    $$
    Summarizing the above bounds, we conclude that
    \begin{equation}
        \label{est:J1}
        |J_1| \le C(|z_\pm|_*, \|z_\pm\|_{C^1})|a||b| \delta (1+\log \delta^{-1}).
    \end{equation}

    We then estimate $J_3$. Given $z_1, z_2$ fixed, we define $z(s) := z_1 + s(z_2 - z_1)$, $s \in [0,1]$. Then we observe that for $\beta \in \T \backslash E_1^\pm(z_1)$,
    $$
    |z(s) - z_\pm(\beta)| \ge \frac12,\quad s\in [0,1].
    $$
    Invoking the Mean Value Theorem and \eqref{tildeGfar}, we see that for any $\beta \in \T \backslash E_1^\pm(z_1)$, there exists $s(\beta) \in [0,1]$ such that
    \begin{equation}\label{est:J3}
    \begin{aligned}
        |J_3| &= \left|\int_{\T \backslash E_1^\pm(z_1)} \left[\na^3 \tilde G(z(s(\beta))-z_\pm(\beta))\right]^{ijk} a_ib_j (z_1 - z_2)_k \pb z_\pm(\beta) d\beta\right|\\
        &\le C|a||b| \int_{\T \backslash E_1^\pm(z_1)} \frac{\delta}{|z(s(\beta)) - z_\pm(\beta)|^3} |\pb z_\pm(\beta)| d\beta\\
        &\le C |a||b| \|z_\pm\|_{C^1} \delta.
    \end{aligned}
    \end{equation}

    Finally, to estimate $J_2$, we observe that for any $\beta \in E_{1}^\pm(z_1)\backslash E_{4\delta}^\pm(z_1)$, $|z(s) - z_\pm(\beta)| \ge \frac34 |z_1 - z_\pm(\beta)|$. Hence by a similar argument to \eqref{est:J3} using the Mean Value Theorem and \eqref{tildeGnear}, we have
    \begin{equation}\label{J2aux1}
        |J_2| \le C|a||b| \|z_\pm\|_{C^1} \delta \int_{4\delta < |z_1 - z_\pm(\beta)| \le 1} \frac{1}{|z_1 - z_\pm (\beta)|} d\beta.
    \end{equation}
    We define $E_j = \{\beta \in \T \;|\; 2^j 4\delta < |z_1 - z_\pm(\beta)| \le 2^{j+1}4\delta\}$. Let $J$ be the smallest natural number such that $2^{J+1} 4\delta \ge 1$. Then clearly $J \sim \log \delta^{-1}$. Hence,
    \begin{align*}
        \int_{4\delta < |z_1 - z_\pm(\beta)| \le 1} \frac{1}{|z_1 - z_\pm (\beta)|} d\beta &\le \sum_{j = 0}^{J} \int_{E_j} \frac{1}{|z_1 - z_\pm (\beta)|} d\beta \\
        &\le \sum_{j = 0}^{J} (2^{j} 4\delta)^{-1} |E_j| \le \sum_{j=0}^J (2^j 4\delta)^{-1} \frac{16\cdot 2^{j+1} \delta}{|z_\pm|_*}\\
        &\le \frac{C}{|z_\pm|_*} \log \delta^{-1},
    \end{align*}
    where we used that $E_j \subset E_{2^{j+1}4\delta}(z_1)$ and therefore $|E_j| \le \frac{16\cdot 2^{j+1} \delta}{|z_\pm|_*}.$
    Combining the above estimate with \eqref{J2aux1} we arrive at
    \begin{equation}
        \label{est:J2}
        |J_2| \le C(\|z_\pm\|_{C^1},|z_\pm|_* ) |a||b| \delta \log \delta^{-1}.
    \end{equation}
    The desired estimate \eqref{est:I2LogLip} then follows from \eqref{est:J1}, \eqref{est:J2}, and \eqref{est:J3}.
\end{proof}

With the technical preparations above, we complete the proof of Theorem \ref{thm:C2alpha}. Recalling \eqref{eq:LScontour}, we first write
\begin{equation}\label{eq:z+}
\pt z_+ = \calF_1 + \calF_2 + \calF_3,
\end{equation}
where
\begin{align*}
    \calF_1 &= \frac{1}{2\pi}\int_0^{2\pi} \log|z_+(\alpha) - z_+(\beta)| \frac{\pa z_+}{\pa \beta}(\beta) d\beta,\\
    \calF_2 &= - \frac{1}{4\pi} \int_0^{2\pi} \tilde G(|z_+(\alpha) - z_+(\beta)|)\frac{\pa z_+}{\pa \beta}(\beta) d\beta,\\
    \calF_3 &= - \frac{1}{4\pi} \int_0^{2\pi} \tilde G(|z_+(\alpha) - z_-(\beta)|)\frac{\pa z_-}{\pa \beta}(\beta) d\beta.
\end{align*}
Fix $T > 0$. From the global in time propagation of $C^{1,\gamma}$ boundary regularity for any $\gamma \in (0,1)$ and Proposition \ref{prop:arc}, there exists $C(T) > 0$ such that
\begin{equation}\label{geobound}
\sup_{t \in [0,T]}(\|z_\pm\|_{C^{1,\gamma}} + \lambda_\pm(t) + |z_\pm|_*^{-1}) < C(T).
\end{equation}
By \cite[Proposition 4.7]{bertozzi}, we  have the following bound:
$$
\|\calF_1(\cdot,t)\|_{C^{2,\gamma}} \le C(T) (1 + \|z_\pm\|_{C^{2,\gamma}}),\quad t \in [0,T].
$$
To estimate $\calF_2$, we observe that
$$
\pa_\alpha^2\calF_2(\alpha) = -\frac{1}{4\pi} \left(\calI_1^+(z_+(\alpha); \pa_\alpha^2 z_+(\alpha)) + \calI_2^+(z_+(\alpha); \p_\alpha z_+(\alpha),\p_\alpha z_+(\alpha))\right).
$$
Then applying \eqref{est:I1bound} and \eqref{est:I2bound}, we have
\begin{equation}\label{est:pa2F21}
\|\pa_\alpha^2\calF_2\|_{L^\infty} \le C(T)\|\pa_\alpha^2 z_\pm\|_{L^\infty} + C(T) \|z_\pm\|_{C^{1,\gamma}}^2 \le C(T) (1+ \|\pa_\alpha^2 z_\pm\|_{L^\infty}).
\end{equation}
Selecting $\alpha, \alpha' \in \T$ such that $\alpha \neq \alpha'$, we further observe that
\begin{align*}
    \p_\alpha^2 \calF_2(\alpha) &- \p_\alpha^2 \calF_2(\alpha') = -\frac{1}{4\pi}\left[\calI_1^+(z_+(\alpha); \pa_\alpha^2 z_+(\alpha)) - \calI_1^+(z_+(\alpha'); \pa_\alpha^2 z_+(\alpha))\right]\\
    &\quad -\frac{1}{4\pi}\left[\calI_1^+(z_+(\alpha'); \pa_\alpha^2 z_+(\alpha)-\pa_\alpha^2 z_+(\alpha'))\right]\\
    &\quad - \frac{1}{4\pi} \left[\calI_2^+(z_+(\alpha); \p_\alpha z_+(\alpha),\p_\alpha z_+(\alpha)) - \calI_2^+(z_+(\alpha'); \p_\alpha z_+(\alpha),\p_\alpha z_+(\alpha))\right]\\
    &\quad - \frac{1}{4\pi} \left[\calI_2^+(z_+(\alpha'); \p_\alpha z_+(\alpha)-\p_\alpha z_+(\alpha'),\p_\alpha z_+(\alpha))\right]\\
    &\quad - \frac{1}{4\pi} \left[\calI_2^+(z_+(\alpha'); \p_\alpha z_+(\alpha'),\p_\alpha z_+(\alpha)-\p_\alpha z_+(\alpha'))\right]\\
    &=: \sum_{j=1}^5 T_j.
\end{align*}
Denote $\delta = |\alpha - \alpha'|$ and assume without loss of generality that $\delta$ is sufficiently small. Using \eqref{geobound} together with \eqref{est:I1bound} and \eqref{est:I2bound} respectively, 
$$
|T_2| + |T_4| + |T_5| \le C(T)(\|\pa_\alpha^2 z_+\|_{C^\gamma} \delta^\gamma + \|z_+\|_{C^{1,\gamma}}\delta^\gamma) \le C(T) (1 + \|z_+\|_{C^{2,\gamma}})\delta^\gamma.
$$
Invoking \eqref{est:I1LogLip} and \eqref{est:I2LogLip} respectively, we also have
$$
|T_1| + |T_3| \le C(T) (\|\pa_\alpha^2 z_+\|_{C^\gamma} + \|z_+\|_{C^{1,\gamma}}^2)\delta \log \delta^{-1} \le C(T) (1 + \|\pa_\alpha^2 z_\pm\|_{C^{\gamma}}) \delta^\gamma.
$$
where the last inequality follows from choosing $\delta$ sufficiently small. Combining the above two estimates,
\begin{equation}
    \label{est:pa2F22}
    \|\pa_\alpha^2 \calF_2\|_{C^\gamma} \le C(T) (1 + \|\pa_\alpha^2 z_\pm\|_{C^{\gamma}}).
\end{equation}
A very similar argument also yields the following bound for $\calF_3$:
\begin{equation}
    \label{est:pa2F3}
    \|\pa_\alpha^2 \calF_3\|_{L^\infty} + \|\pa_\alpha^2 \calF_3\|_{C^\gamma} \le C(T) (1 + \|\pa_\alpha^2 z_\pm\|_{L^\infty} +\|\pa_\alpha^2 z_\pm\|_{C^{\gamma}}).
\end{equation}
Using \eqref{est:pa2F21}, \eqref{est:pa2F22}, \eqref{est:pa2F3}, and the equation for $z_+$, we arrive at the following differential inequality:
$$
\frac{d}{dt} (\|\pa_\alpha^2 z_+\|_{C^\gamma} + \|\pa_\alpha^2 z_+\|_{L^\infty}) \le C(T) (1 + \|\pa_\alpha^2 z_+\|_{C^\gamma} + \|\pa_\alpha^2 z_+\|_{L^\infty}),\quad t\in[0,T].
$$
from which we obtain $\sup_{t\in[0,T]} (\|\pa_\alpha^2 z_+\|_{C^\gamma} + \|\pa_\alpha^2 z_+\|_{L^\infty}) \le C(T)$. The same bound for $z_-$ is also obtained by a similar argument. This estimate together with \eqref{geobound} concludes the proof of Theorem \ref{thm:C2alpha}.

\section{The numerical algorithm}
\label{sec:nume1}

In this section, we validate numerically the theoretical finding of merger in the odd-even symmetry case, and explore potential mergers for other initial conditions. First, analytical representations are derived for four different initial conditions, as the basis for our numerical approximation. The approximation techniques we use are based on \cite{dritschel-89}. 

The initial boundaries of the vortex patches are closed curves in $\R^2$, which we parameterize clockwise for $\alpha \in [0, 2\pi]$. We linearly interpolate the curves with $N$ nodes, which correspond to our discretization of the boundaries of our patches. The nodes are initially evenly distributed in $\alpha$, and the contour is remeshed every $5$ time steps in order to maintain the uniform density of nodes (in $\alpha$). We then evolve the patches discretely using second-order finite differences to compute the differential contour element, second-order quadrature in $\alpha$, and a fourth-order Runge-Kutta scheme for the time evolution. 

In order to satisfy the CFL condition, we choose time-step: 
\begin{align}
    \Delta t = \frac{1}{N}
\end{align}

All numerical tests exhibit short-time convergence. We see no difference in behavior of patches evolved with $\Delta t = \frac{1}{N}$ and $\Delta t = \frac{1}{10N}$. As in \cite{dritschel-89}, we see that if there are too few nodes along the contour or if these nodes are spaced too far apart, the evolution of the discretized contour exhibits instability. After some time dependent on the initialization of the patches, the simulations become inaccurate as small scales form and the contour filaments. The numerical results we shall report are well before this time by an order of magnitude. In the results we report, the number of nodes is fixed to $N = 512$.

To visualize $F$ and $\phi$, we solve an elliptic problem in the Eulerian frame. We mesh a relevant rectangular domain containing the contours throughout their time evolution and compute the values over time of $\sigma_+, \sigma_-$ by evaluating for each time-step the  {scalar multiples by $a_\pm$ of} indicator functions on the patches $\Omega_{+}, \Omega_-$ in the interiors of $z_+, z_-$:
\begin{align*}
    \sigma_\pm = a_\pm \chi_{\Omega_{\pm}}(x)
\end{align*}
$F$ is computed by taking the average of $\sigma_+$ and $\sigma_-$. 

For the right-handed unscreened system, we then compute $\phi = (- \Delta)^{-1}F$ using the fundamental solution of the Laplacian: 
\begin{align}
    \phi(x,t) = -\frac{1}{2\pi}\int_{\R^2}\log |x-y| F(y) \, dy
\end{align}
For the right-handed screened system, we  compute $\phi = (I - \Delta)^{-1}F$. Analytically  we use the convolution of $F$ with the Bessel potential:
\begin{align}
    \phi(x, t) = \frac{1}{2\pi}\int_{\R^2} K_0(|x - y|) F(y,t)\,dy
\end{align}
where  $K_0$ is the modified Bessel function of the second kind of order zero.  Similarly, for the left-handed screened system we compute $\phi = ( \Delta - I)^{-1}F$, introducing a minus sign to the above. 

\subsection{Translational merger}
We start our numerical experimentation by looking at the right-handed, unscreened model in the case of the following initial conditions: set $\sigma_+(x,0) = \chi_{B}$ and $\sigma_-(x,0) = \chi_{\tilde B}$, where $B$ is a disk centered at $(d/2+r,0)$ with radius $r$. Here, $r, d > 0$. Also, $\tilde B$ is mirror image of $B$ across $x_2$ axis. Let $B$, $\tilde B$ be parametrized by $z_+, z_-$, evolved according to the following right-handed unscreened contour dynamics: 

\begin{equation}
    \label{eq:RU}
    \begin{aligned}
        \pt z_+ &= -\frac{1}{2\pi} \int_0^{2\pi} \log |z_+(\alpha) - z_-(\beta)| \pb z_-(\beta) d\beta,\\
        \pt z_- &= \frac{1}{2\pi} \int_0^{2\pi} \log |z_-(\alpha) - z_+(\beta)| \pb z_+(\beta) d\beta.
    \end{aligned}
\end{equation}

We use an explicit time-marching algorithm in order to advance the vortex patches every time-step. The contours of the vortex patches $z_+, z_-$ are discretized into $N$ nodes. Let $z_{\pm, j}$ represent the $j$-th node of each respective patch. We employ the trapezoidal rule to compute the integrals in \eqref{eq:RU}, and second-order central finite differences to approximate the differential contour elements $\pa_\beta z_\pm (\beta) d\beta$:
\begin{align}
    \Delta z_{\pm, j} = \frac{z_{\pm, j+1} - z_{\pm, j-1}}{2}
\end{align}
We then sum the kernel interactions. See:
\begin{align}
    \pa_t z_{+, i} \approx -\frac{1}{2\pi} \sum^N_{j=1} \log |z_{+, i} - z_{-, j}| \Delta z_{-, j} 
\end{align}
Let this finite differences method be denoted $V(z_+, z_-) = (\pa_tz_+, \pa_tz_-)$. To advance the position of the vortex patches in time, we utilize an explicit fourth-order Runge-Kutta method. For a given time step $\Delta t$, the positions at the next time are updated as follows:
\begin{equation}
\begin{aligned}
    (k_{1+}, k_{1-}) &= V\left(z_+^{(n)}, z_-^{(n)}\right) \\
       (k_{2+}, k_{2-}) &= V\left(z^{(n)}_+ + \frac{\Delta t}{2}k_{1+} \; , \; z^{(n)}_- + \frac{\Delta t}{2}k_{1-} \right) \\
    (k_{3+}, k_{3-}) &= V\left(z^{(n)}_+ + \frac{\Delta t}{2}k_{2+} \; , \; z^{(n)}_- + \frac{\Delta t}{2}k_{2-} \right) \\
    (k_{4+}, k_{4-}) &= V\left(z^{(n)}_+ + \Delta t k_{3+} \; , \; z^{(n)}_- +  \Delta tk_{3-} \right) \\
    z_{\pm}^{(n+1)} &= z^{(n)}_\pm + \frac{\Delta t}{6}(k_{1\pm} + 2k_{2\pm} + 2k_{3\pm} + k_{4\pm})
\end{aligned}
\end{equation}

As the vortex patches evolve, we remesh the contours $z_+, z_-$ every $5$ time-steps using a periodic cubic spline interpolation in order to maintain even spacing between the $N$ nodes.

\subsection{Antipodal disks}

The antipodal disk scenario is evolved according to the right-handed unscreened system \eqref{eq:RU} where the contours $z_\pm$ bound patches of opposite sign.  {That is, we set $\sigma_+(x,0) = \chi_{B}$ and $\sigma_-(x,0) = -\chi_{\tilde B}$, where $B$ is a disk centered at $(d/2+r,0)$ with radius $r$.} The contours $z_+, z_-$ evolve according to the following system:
\begin{equation}
    \label{eq:RUoppsign}
    \begin{aligned}
        \pt z_+ = \frac{1}{2\pi} \int_0^{2\pi} \log |z_+(\alpha) - z_-(\beta)| \pb z_-(\beta) d\beta,\\
        \pt z_- = \frac{1}{2\pi} \int_0^{2\pi} \log |z_-(\alpha) - z_+(\beta)| \pb z_+(\beta) d\beta.
    \end{aligned}
\end{equation}

The time integration, spatial discretization, and periodic remeshing protocols are identical to those described in Section 2.1. 

\subsection{Odd-even symmetry}
In \cite{chu}, a merger scenario is proved for the right-handed screened system with assumed odd-even symmetry. By odd-even symmetry, we mean the following symmetry: 
\begin{align} \label{eq:oddevensymm}
    \sigma_\pm (t,-x_1, x_2) = -\sigma_\pm (t,x_1,x_2),\quad \sigma_+(t,x_1,x_2) = \sigma_-(t,x_1,-x_2).
\end{align}
The four patches we track are bounded by contours $z, -z, \bar z, \tilde z$ where  $\tilde{z} = (-z_1, z_2)$ and $\bar{z} = (z_1,-z_2)$. The symmetry allows us to keep track of only one contour $z$ which is placed initially in the first quadrant, as the other three contours $-z, \bar z, \tilde z$ follow by symmetry. We evolve $z$ according to the following equation:
\begin{equation}
    \label{eq:RSoddodd}
    \begin{aligned}
        \pt z &= \frac{1}{4\pi} \int_0^{2\pi} \left(\log |z(\alpha) - z(\beta)| + K_0(|z(\alpha) - z(\beta)|)\right) \pb z(\beta) d\beta\\
        &\quad + \frac{1}{4\pi} \int_0^{2\pi} \left(\log |z(\alpha) - \tilde z(\beta)| + K_0(|z(\alpha) - \tilde z(\beta)|)\right) \pb \tilde z(\beta) d\beta\\
        &\quad\quad  + \frac{1}{4\pi} \int_0^{2\pi} \left(\log |z(\alpha) - \bar z(\beta)| - K_0(|z(\alpha) - \bar z(\beta)|)\right) \pb \bar z(\beta) d\beta\\
        &\quad\quad \quad  + \frac{1}{4\pi} \int_0^{2\pi} \left(\log |z(\alpha) + z(\beta)| - K_0(|z(\alpha) + z(\beta)|)\right) (-\pb  z(\beta)) d\beta
    \end{aligned}
\end{equation}

We also examine the right-handed unscreened system with the same odd-even symmetry. In the unscreened scenario, the contour $z$ is evolved according to the equation:
\begin{align}
    \label{eq:RUoddodd}
    \pt z = \frac{1}{2\pi} \int_0^{2\pi} \log|z(\alpha) - \bar z(\beta)| \pb\bar{z} (\beta )d\beta - \frac{1}{2\pi}\int_0^{2\pi} \log|z(\alpha) + z(\beta)| \pb{z} (\beta )d\beta
\end{align}

Our algorithm computes the time marching of $z$, which is then reflected across the $x-$ and $y-$axes to visualize the positions of all four patches over time. The discretization and remeshing are identical to those described in Section 2.1.

In order to compute $F$ and $\phi$ in these scenarios, we see that the symmetry \eqref{eq:oddevensymm} implies that:
\begin{align}
    F = \frac{1}{2}(\sigma_++\sigma_-) = \frac{1}{2}(\chi_{\Omega_{z}} -\chi_{\Omega_{\tilde z}} + \chi_{\Omega_{\bar z}} - \chi_{\Omega_{- z}})(x)
\end{align}
Then, $\phi$ is computed numerically as described above.

\subsection{Left-handed screened system.}
In Constantin--Hu \cite{chu}, merger is proven for the left-handed screened system modeled by the contour dynamics equation:
\begin{equation}
    \label{eq:LHS}
    \begin{aligned}
        \pt z_+ &= \frac{1}{2\pi} \int_0^{2\pi} \log |z_+(\alpha)- z_+(\beta)| \pb z_+ (\beta) d\beta \\ &\qquad+ \frac{1}{4\pi} \int_0^{2\pi} \bar G(|z_+(\alpha) - z_+(\beta)|)\pb z_+ (\beta) d\beta \\
        &\qquad \qquad+ \frac{1}{4\pi}\int_0^{2\pi}\bar G(|z_+(\alpha) - z_-(\beta)|)\pb z_-(\beta)d\beta, \\[0.5em]
        \pt z_- &= -\frac{1}{2\pi} \int_0^{2\pi} \log |z_-(\alpha)- z_-(\beta)| \pb z_- (\beta) d\beta\\
        &\qquad- \frac{1}{4\pi} \int_0^{2\pi} \bar G(|z_-(\alpha) - z_+(\beta)|)\pb z_+ (\beta) d\beta \\
        &\qquad \qquad- \frac{1}{4\pi}\int_0^{2\pi}\bar G(|z_-(\alpha) - z_-(\beta)|)\pb z_-(\beta)d\beta,
    \end{aligned}
\end{equation}
where $\bar G(r) = -K_0(r) - \log r$ and $K_0$ is the modified Bessel function of the second kind. The merger scenario is initiated by $\sigma_-$ a circular patch with intensity $1$ and $\sigma_+$ a Kirchoff's ellipse with intensity 1. See \cite{chu}. Our algorithm is the same as previously described.

\section{Numerical simulations}
\label{sec:nume2}

In our Figures, the leftmost plots show the positions of the contours $z_+, z_-$. The center plots show the position and value of $F$, with red patches corresponding to $\sigma_+$ and blue patches corresponding to $\sigma_-$. Bright red corresponds to $F$ taking on the value $0.5$, while dark red corresponds to $F = 1$, and blue corresponds to $F$ taking on the value $-0.5$ while dark blue corresponds to $F = -1$. The rightmost plots show the position and value of the level sets $\phi=c$  with a color corresponding to the certain value $c$. Blue corresponds to values of $\phi$ closer to its global minimum over time and space, while red curves correspond to values of $\phi$ closer to its global maximum.

\subsection{Translational merger}\label{sec:transmerge}

In this scenario of the right-handed unscreened system, the two contours $z_+, z_-$ travel in the negative $x_2$ direction together before merging. Their behavior is symmetric across the $y$-axis. In Figure \ref{fig:RHU} are contours $z_+, z_-$ initialized at $t=0$ as circles of radius $1$ centered at $(-2,0)$ and $(2,0)$ respectively. The level curves  drawn are
$$
\phi = -0.85, -0.75, -0.6, -0.5, -0.4, -0.3
$$
are drawn, going from  blue to red.

Here, we have numerical evidence of a situation in which the two contours and their magnetic fields initially have two connected components. As they translate in time, they become one connected component, therefore changing topology. Indeed, we see that numerically the change in topology of $F$ corresponds to a subsequent change in topology of $\phi$. Illustrating this is the disconnection of the level set $\phi = -0.4$ between $t=30$ and $t=40$.

The level set $\phi=-0.3$ shrinks and disappears throughout the time evolution: see that the level set $\phi=-0.3$ is present prior to $t=20$ and is not present at $t=30$ and therefore indicate another kind of change in topology of $\phi$. $\phi$ attains its global maximum at $t \approx  0.29$ and then decreases over time.

\subsection{Antipodal disks} \label{sec:antipodal} 
In this initialization of the right-handed unscreened system, two contours revolve about one another in the clockwise direction while deforming. This behavior is symmetric. The two contours  $z_+, z_-$ are initialized at $t=0$ as circles of radius $1$ centered at $(-1.5,0)$ and $(1.5,0)$ respectively, and throughout the simulation they do not merge or intersect. See Figure \ref{fig:antipodal}. Pictured are level curves $$\phi = -0.85, -0.75, -0.6, -0.5,-0.4, -0.27, -0.25, -0.23, -0.2$$ in colors blue to red, respectively. At $t=0$, the level set $\phi=-0.2$ is the only level set in two disconnected components. Although the contours remain disconnected, we observe a change in topology of $\phi$. Indeed, certain level curves of $\phi =c$ disconnect: the level set $\phi=-0.23$ is disconnected at $t=20$.

\subsection{Odd-even symmetry}\label{sec:oddeven}

We exhibit here the unscreened and screened odd-even symmetry systems with contours $z, -z, \tilde z, \bar z$ with $z$ initialized at $t=0$ as a circle of radius $0.45$ centered at $(0.5, 0.5)$. In both the unscreened and screened case, the contours drift towards one another and merge. See Figure \ref{fig:RHU_oddevensym} for the unscreened system. The level curves $$\phi=\pm 0.02,\;\pm0.04,\;\pm 0.05,\;\pm0.06$$ are pictured where negative values are blue and positive values are red. We observe changes in topology in $\phi$ over time; level sets $\phi=\pm0.06$ disconnect between $t=3$ and $t=4.5$.

As noted in Section 2.3, we numerically verify the existence of merger in the screened odd-even symmetry case for contours initialized close enough together, a small-scale version of which has been rigorously justified in \cite{chu}. See Figure \ref{fig:RHS_oddevensym}. The level curves for 
\begin{equation*}
    \phi=\pm 0.02,\; \pm0.04, \;\pm 0.05,\; \pm0.06,\; \pm0.99\phi_{\infty}
\end{equation*} 
are displayed where $\phi_{\infty}$ is the (discrete) maximum in space and time. Note that after $t=0$, the level curves $\phi=\pm0.99\phi_\infty$ vanish, exhibiting a topological change in $\phi$. Moreover, the level sets $\phi=\pm0.06$ become disconnected between $t=0$ and $t=1$ before vanishing, and the level sets $\phi=\pm0.05$ become disconnected between $t=2$ and $t=3$.

\subsection{Left-handed screened system}\label{sec:lefthanded}
We initialize $z_+$ as an ellipse and $z_-$ as a circle centered at the origin for small $R=1$ and $d=\frac R 2\equiv0.5$.
\begin{equation}
z_+(\alpha)=\bigl(d+2R+R\cos\alpha,\;2R\sin\alpha\bigr),
\; \;
z_-(\alpha)=\bigl(R\cos\alpha,\;R\sin\alpha\bigr),
\; \;\alpha\in[0,2\pi]
\end{equation}

Indeed, we see merger and un-merger of $z_+$ and $z_-$, as well as topological changes in $\phi$. See Figure \ref{fig:LHS_R1}. At $t\approx9$, another connected component of the level set $\phi=-0.25$ appears, pictured in the plot at $t=10$. At $t=15$ we see both connected components of the level set $\phi=-0.25$ have merged into one connected component. At $t=20$, we see that the level set $\phi=-0.25$ has again become two disconnected components. 

\begin{figure}[htpb]
    \noindent
    \centerline{
    \begin{tikzpicture}
    
    \node[inner sep=0pt, anchor=center] (O10T0) at (current page.center)
    {\includegraphics[width=0.5\paperwidth]{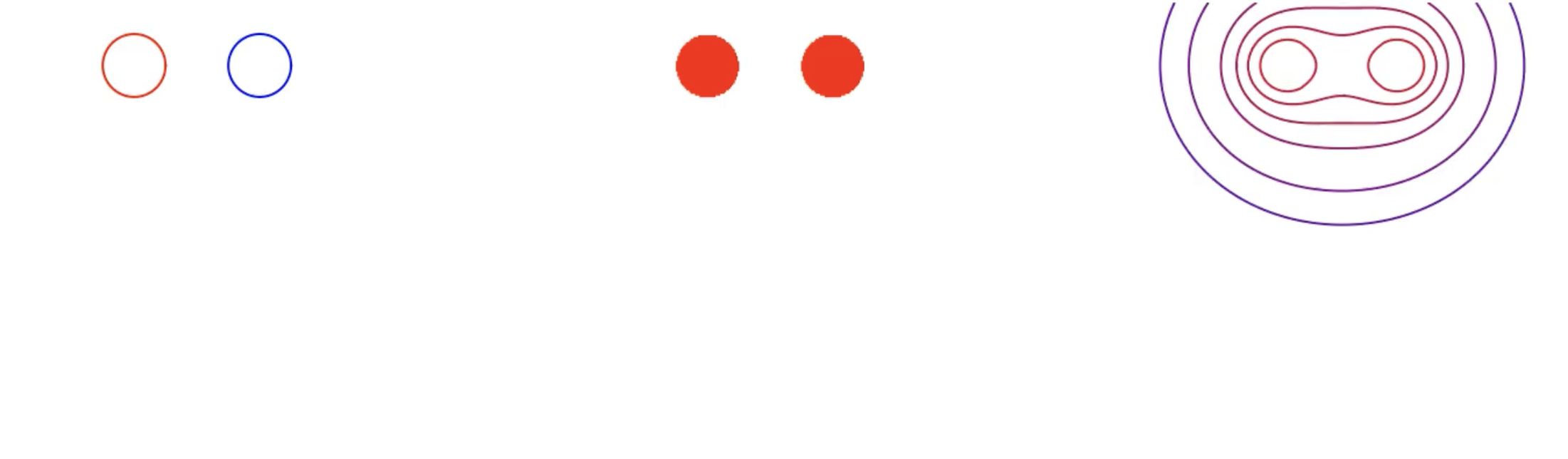}};
    
    \node[inner sep=0pt, anchor=center] (O20T0) at ([yshift=\ygap] O10T0.south) {\includegraphics[width=0.5\paperwidth]{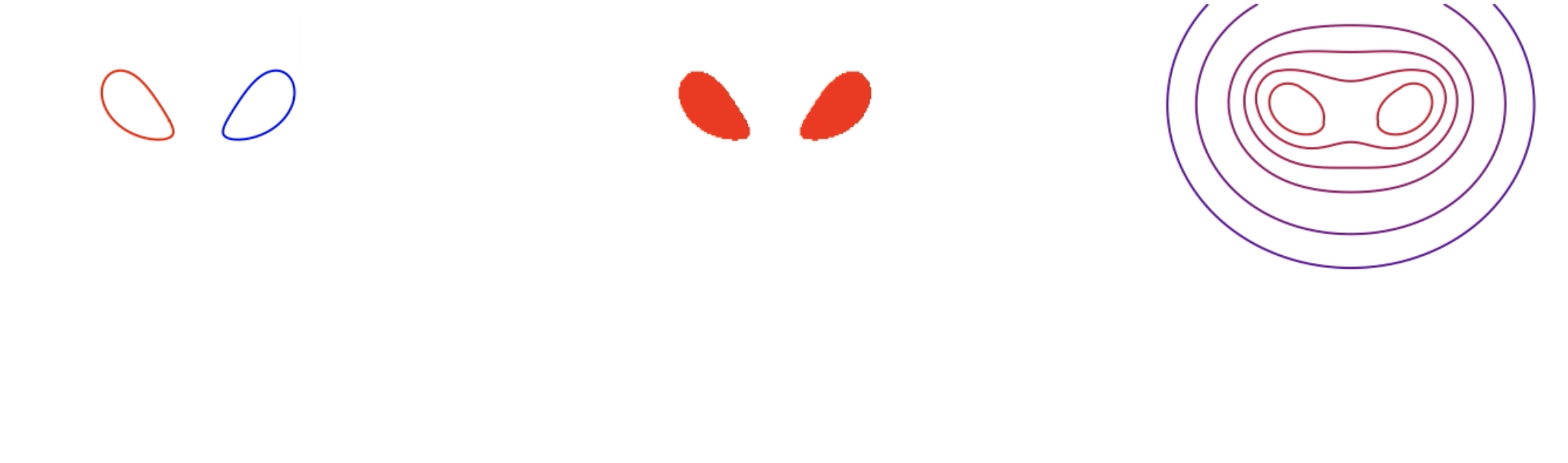}};

    \node[inner sep=0pt, anchor=center] (O30T0) at ([yshift=\ygap] O20T0.south) {\includegraphics[width=0.5\paperwidth]{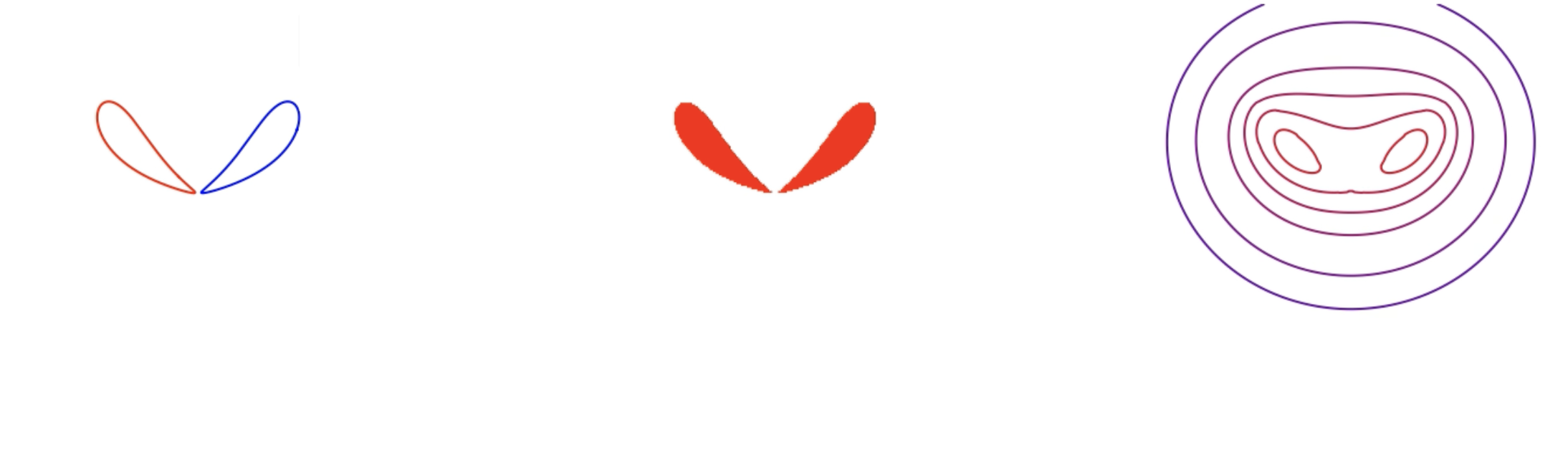}};

    \node[inner sep=0pt, anchor=center] (O40T0) at ([yshift=\ygap] O30T0.south) {\includegraphics[width=0.5\paperwidth]{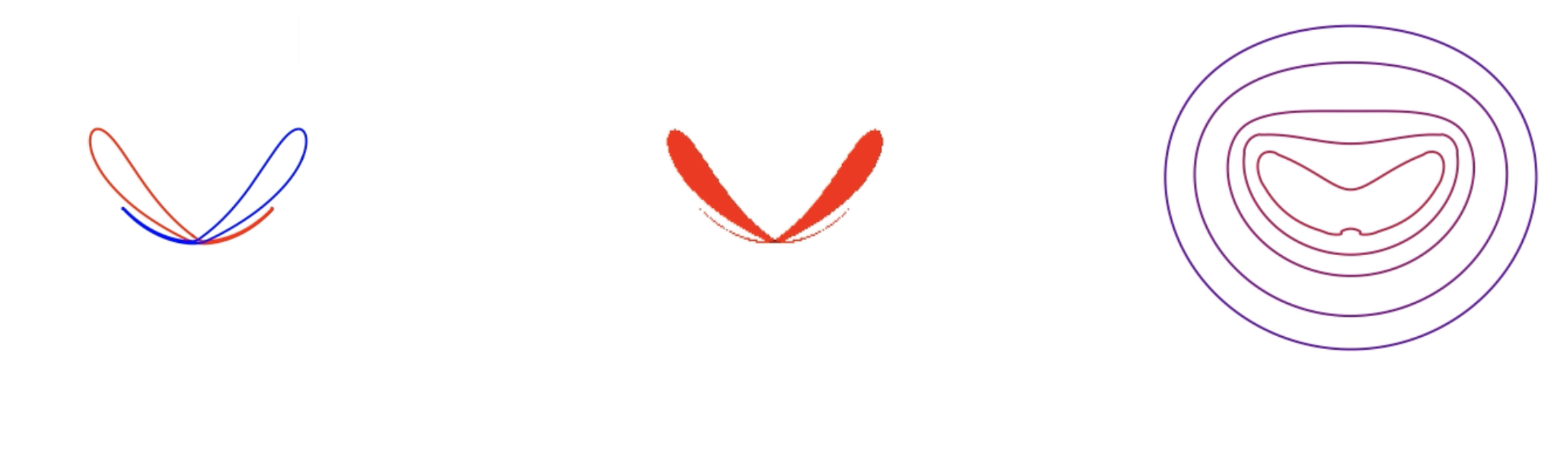}};

    \node[inner sep=0pt, anchor=center] (O50T0) at ([yshift=\ygap] O40T0.south) {\includegraphics[width=0.5\paperwidth]{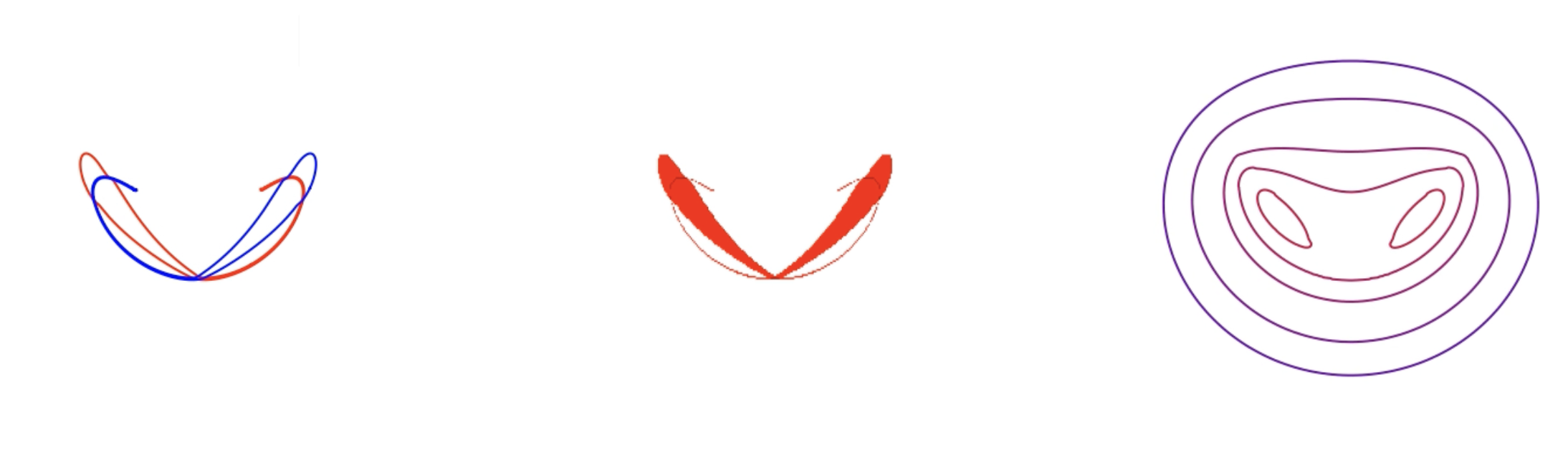}};

    \node (LO10) at ([xshift=-0.1mm]O10T0.west) {\rotatebox[origin=c]{90}{$t=0$}};
    \node (LO20) at ([xshift=-0.1mm]O20T0.west) {\rotatebox[origin=c]{90}{$t=10$}};
    \node (LO30) at ([xshift=-0.1mm]O30T0.west) {\rotatebox[origin=c]{90}{$t=20$}};
    \node (LO40) at ([xshift=-0.1mm]O40T0.west) {\rotatebox[origin=c]{90}{$t=30$}};
    \node (LO50) at ([xshift=-0.1mm]O50T0.west) {\rotatebox[origin=c]{90}{$t=40$}};

    \node[anchor=south, xshift=0mm, yshift=2mm] at ($(O10T0.north west)!0.1666!(O10T0.north east)$) {$z_\pm$};
    \node[anchor=south,  xshift=0mm, yshift=2mm] at ($(O10T0.north west)!0.5000!(O10T0.north east)$) {$F$};
    \node[anchor=south, xshift=0mm, yshift=2mm] at ($(O10T0.north west)!0.8333!(O10T0.north east)$) {$\phi$ };
    
    \end{tikzpicture} }
    \caption{Right-handed unscreened system with the same sign initial data in Section \ref{sec:transmerge}.}
    \label{fig:RHU}
\end{figure} 
\begin{figure}[htbp]
    \noindent
    \centerline{
    \begin{tikzpicture}
    
    \node[inner sep=0pt, anchor=center] (O10T0) at (current page.center)
    {\includegraphics[width=0.5\paperwidth]{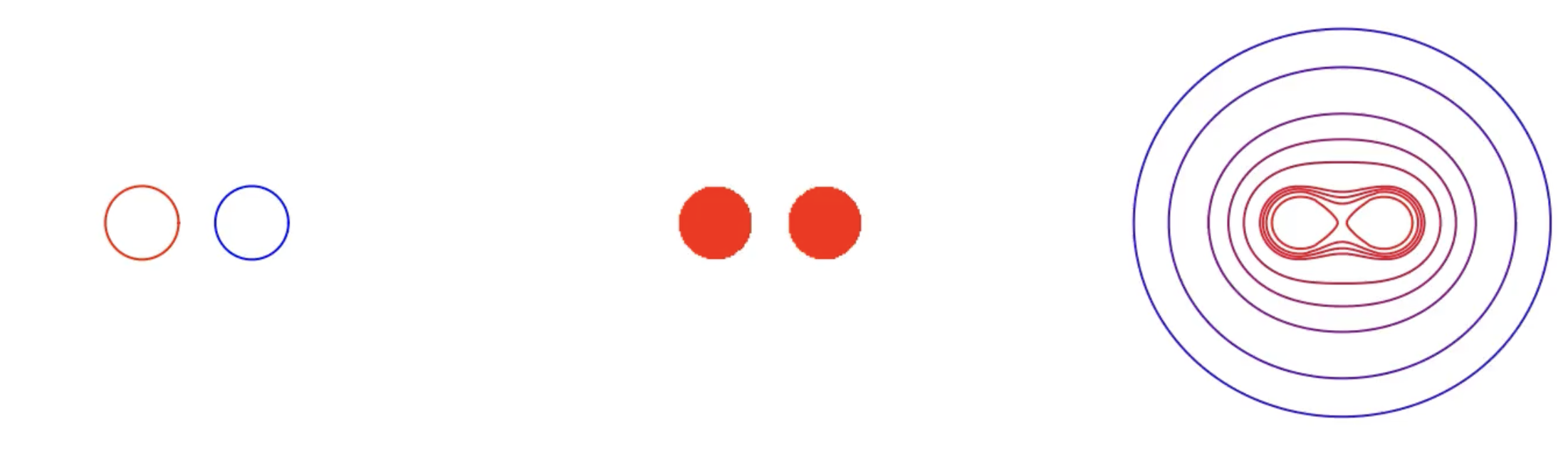}};
    
    \node[inner sep=0pt, anchor=center] (O20T0) at ([yshift=\ygap] O10T0.south) {\includegraphics[width=0.5\paperwidth]{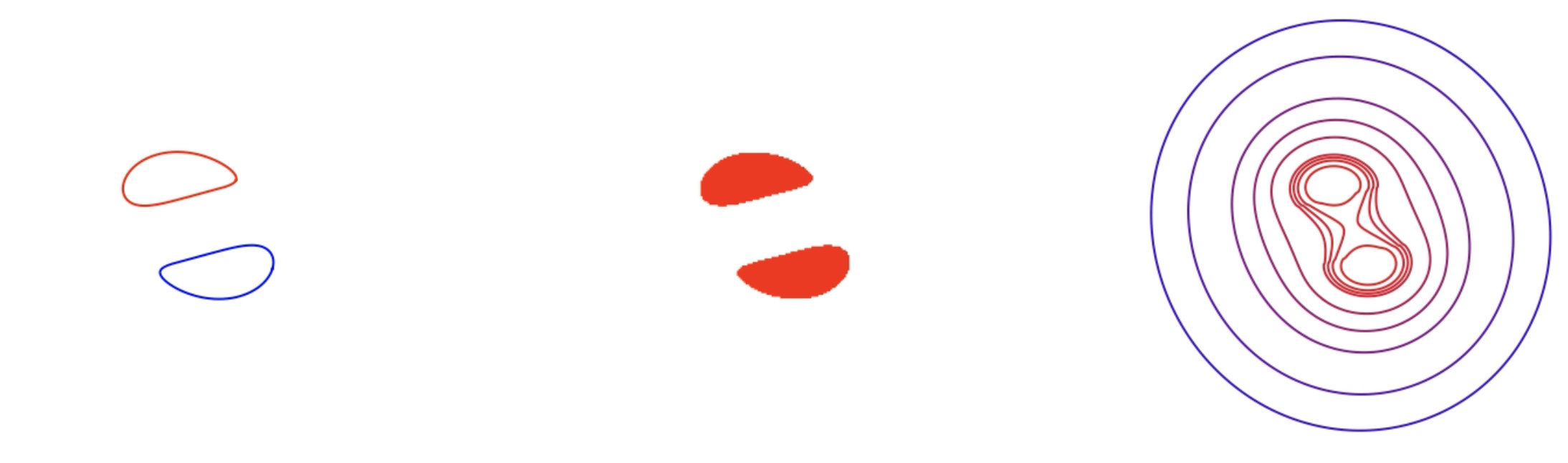}};

    \node[inner sep=0pt, anchor=center] (O30T0) at ([yshift=\ygap] O20T0.south) {\includegraphics[width=0.5\paperwidth]{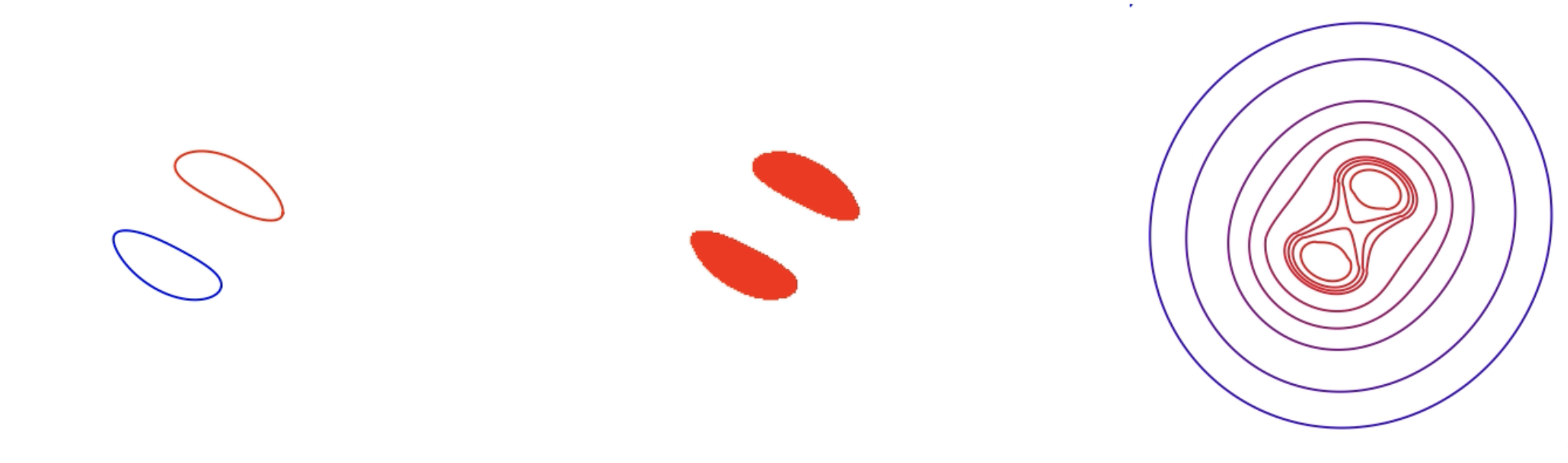}};

    \node[inner sep=0pt, anchor=center] (O40T0) at ([yshift=\ygap] O30T0.south) {\includegraphics[width=0.5\paperwidth]{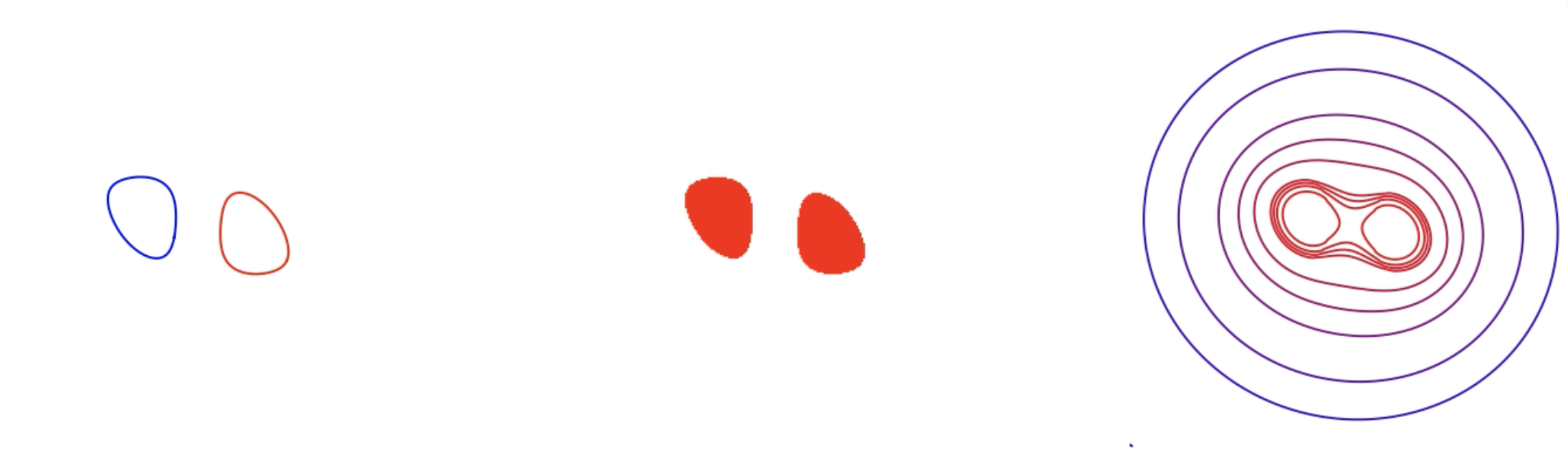}};

    \node[inner sep=0pt, anchor=center] (O50T0) at ([yshift=\ygap] O40T0.south) {\includegraphics[width=0.5\paperwidth]{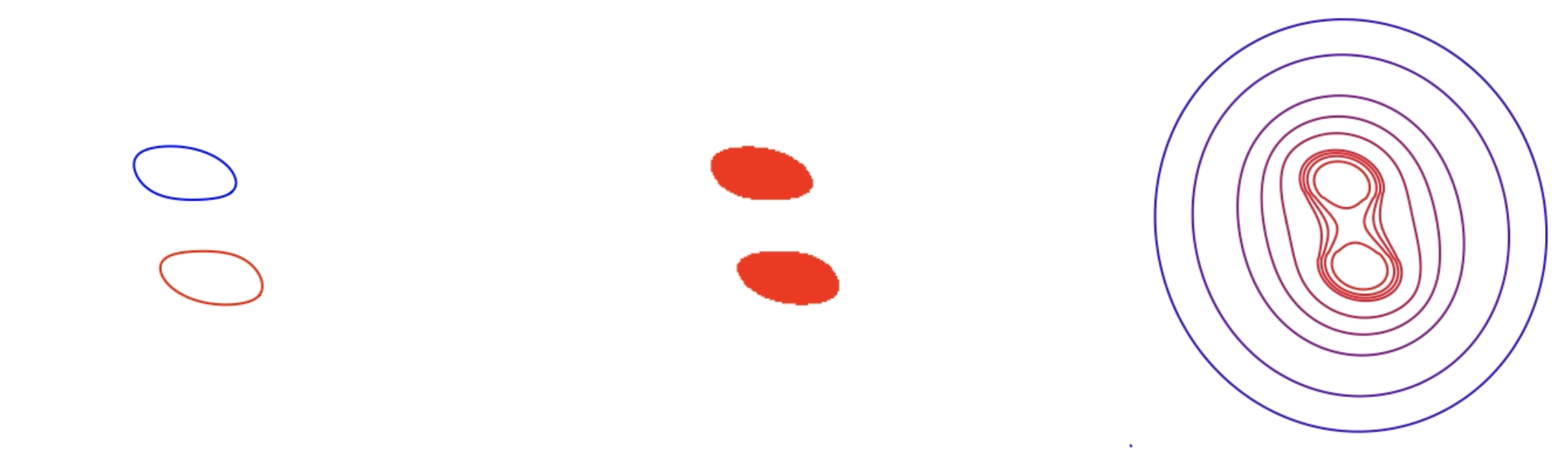}};

    \node (LO10) at ([xshift=-0.1mm]O10T0.west) {\rotatebox[origin=c]{90}{$t=0$}};
    \node (LO20) at ([xshift=-0.1mm]O20T0.west) {\rotatebox[origin=c]{90}{$t=10$}};
    \node (LO30) at ([xshift=-0.1mm]O30T0.west) {\rotatebox[origin=c]{90}{$t=20$}};
    \node (LO40) at ([xshift=-0.1mm]O40T0.west) {\rotatebox[origin=c]{90}{$t=30$}};
    \node (LO50) at ([xshift=-0.1mm]O50T0.west) {\rotatebox[origin=c]{90}{$t=40$}};

    \node[anchor=south, xshift=0mm, yshift=1mm] at ($(O10T0.north west)!0.1666!(O10T0.north east)$) {$z_\pm$};
    \node[anchor=south,  xshift=0mm, yshift=1mm] at ($(O10T0.north west)!0.5000!(O10T0.north east)$) {$F$};
    \node[anchor=south, xshift=0mm, yshift=1mm] at ($(O10T0.north west)!0.8333!(O10T0.north east)$) {$\phi$};
    
    \end{tikzpicture}}
    \caption{Right-handed unscreened system the opposite sign initial data in Section \ref{sec:antipodal}.}
    \label{fig:antipodal}
\end{figure}
\begin{figure}[htbp]
    \noindent
    \centerline{
    \begin{tikzpicture}
    
    \node[inner sep=0pt, anchor=center] (O10T0) at (current page.center)
    {\includegraphics[width=0.5\paperwidth]{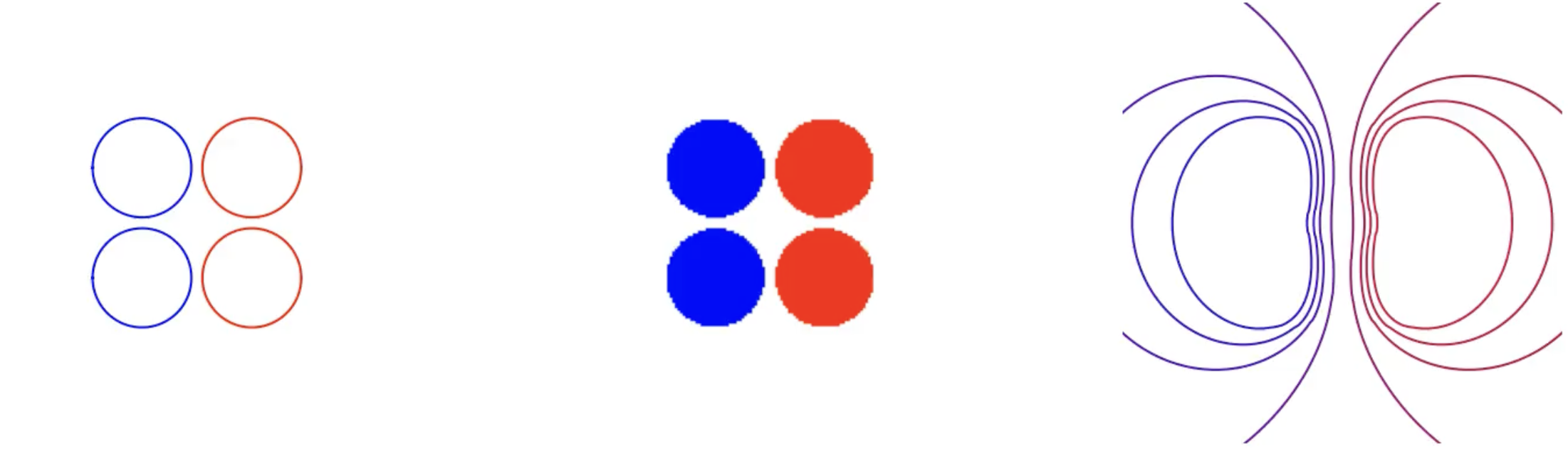}};
    
    \node[inner sep=0pt, anchor=center] (O20T0) at ([yshift=\ygap] O10T0.south) {\includegraphics[width=0.5\paperwidth]{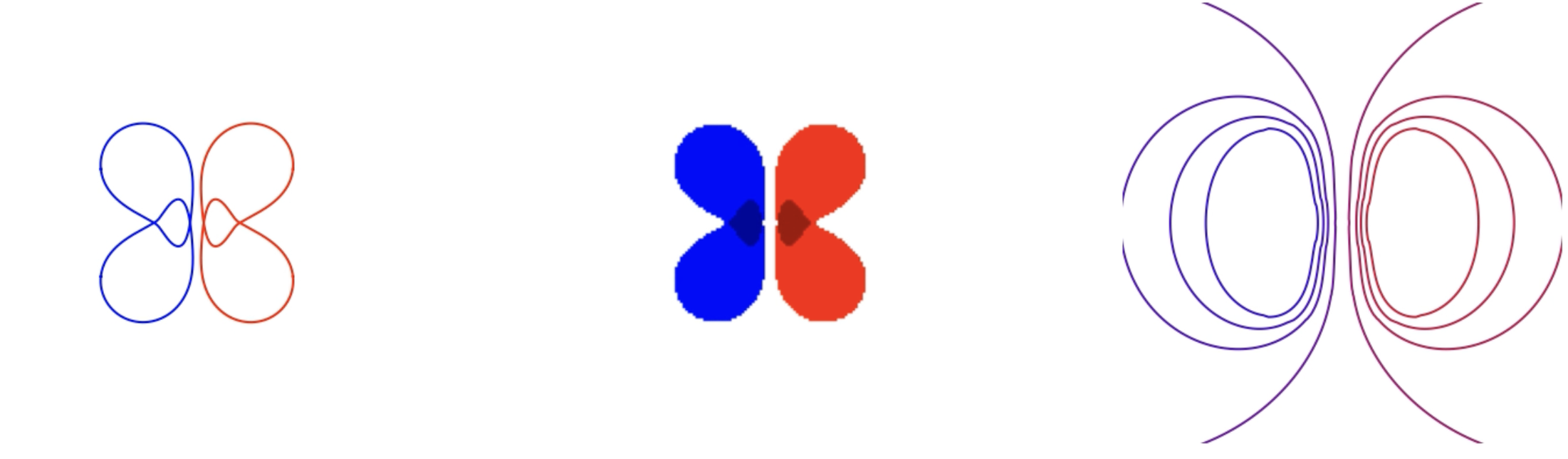}};

    \node[inner sep=0pt, anchor=center] (O30T0) at ([yshift=\ygap] O20T0.south) {\includegraphics[width=0.5\paperwidth]{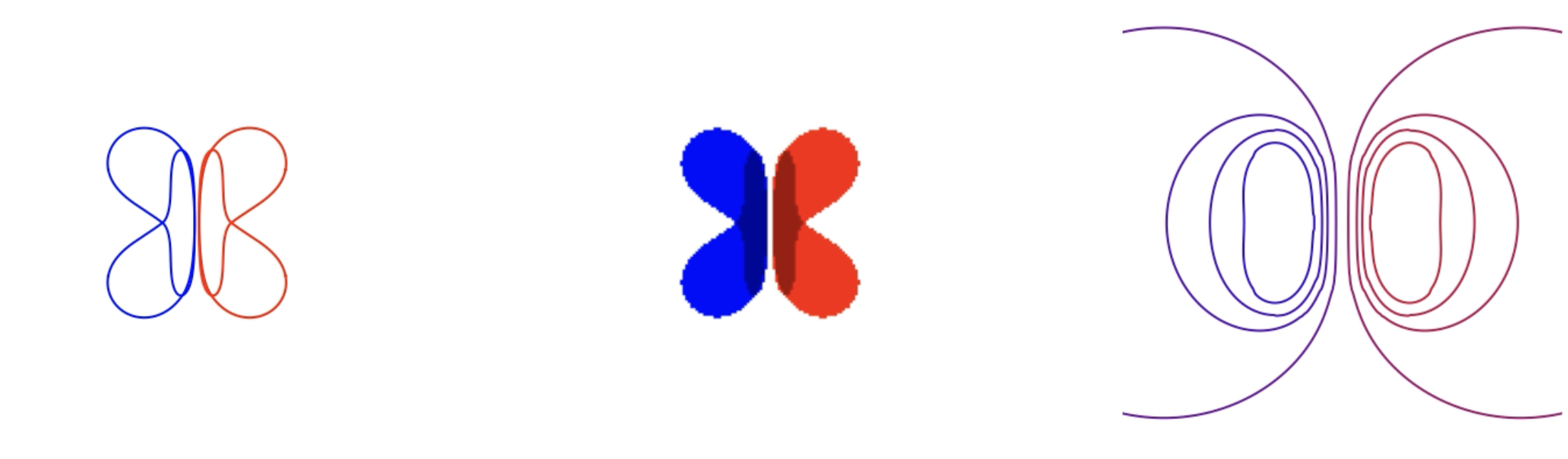}};

    \node[inner sep=0pt, anchor=center] (O40T0) at ([yshift=\ygap] O30T0.south) {\includegraphics[width=0.5\paperwidth]{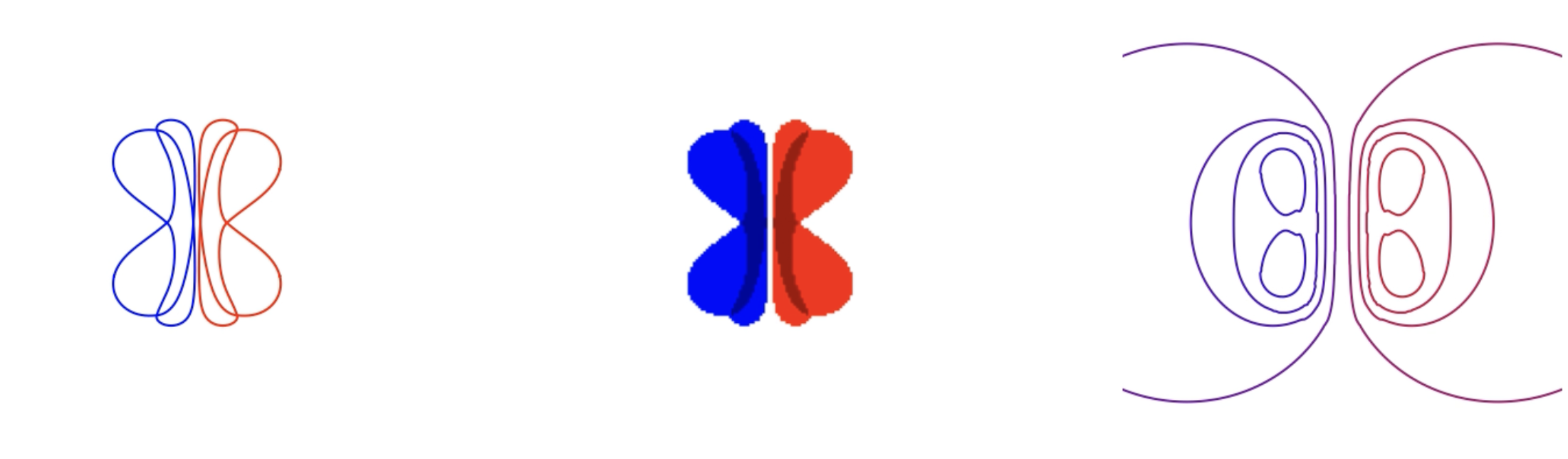}};

    \node[inner sep=0pt, anchor=center] (O50T0) at ([yshift=\ygap] O40T0.south) {\includegraphics[width=0.5\paperwidth]{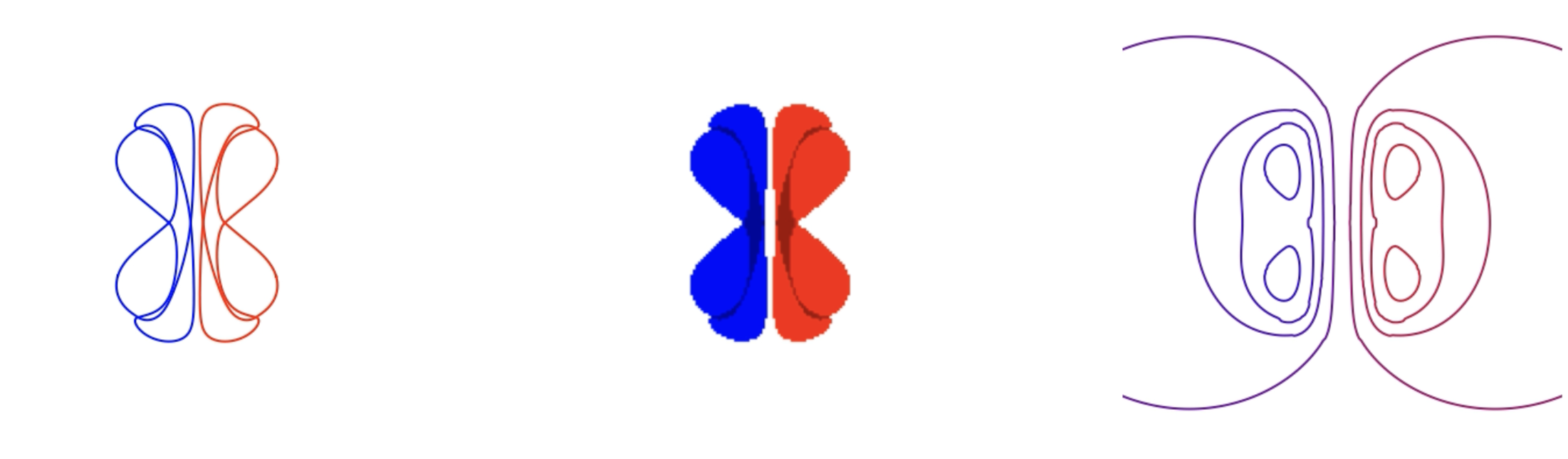}};

    \node (LO10) at ([xshift=-0.1mm]O10T0.west) {\rotatebox[origin=c]{90}{$t=0$}};
    \node (LO20) at ([xshift=-0.1mm]O20T0.west) {\rotatebox[origin=c]{90}{$t=1.5$}};
    \node (LO30) at ([xshift=-0.1mm]O30T0.west) {\rotatebox[origin=c]{90}{$t=3$}};
    \node (LO40) at ([xshift=-0.1mm]O40T0.west) {\rotatebox[origin=c]{90}{$t=4.5$}};
    \node (LO50) at ([xshift=-0.1mm]O50T0.west) {\rotatebox[origin=c]{90}{$t=6$}};

    \node[anchor=south, xshift=0mm, yshift=1mm] at ($(O10T0.north west)!0.1666!(O10T0.north east)$) {$z, \tilde{z},\bar{z},-z$};
    \node[anchor=south,  xshift=0mm, yshift=1mm] at ($(O10T0.north west)!0.5000!(O10T0.north east)$) {$F$};
    \node[anchor=south, xshift=0mm, yshift=1mm] at ($(O10T0.north west)!0.8333!(O10T0.north east)$) {$\phi$};
    
    \end{tikzpicture}}
    \caption{Right-handed unscreened system with odd-even symmetric data given in Section \ref{sec:oddeven}.}
    \label{fig:RHU_oddevensym}
\end{figure}

\begin{figure}[htbp]
    \noindent
    \centerline{
    \begin{tikzpicture}
    
    \node[inner sep=0pt, anchor=center] (O10T0) at (current page.center)
    {\includegraphics[width=0.5\paperwidth]{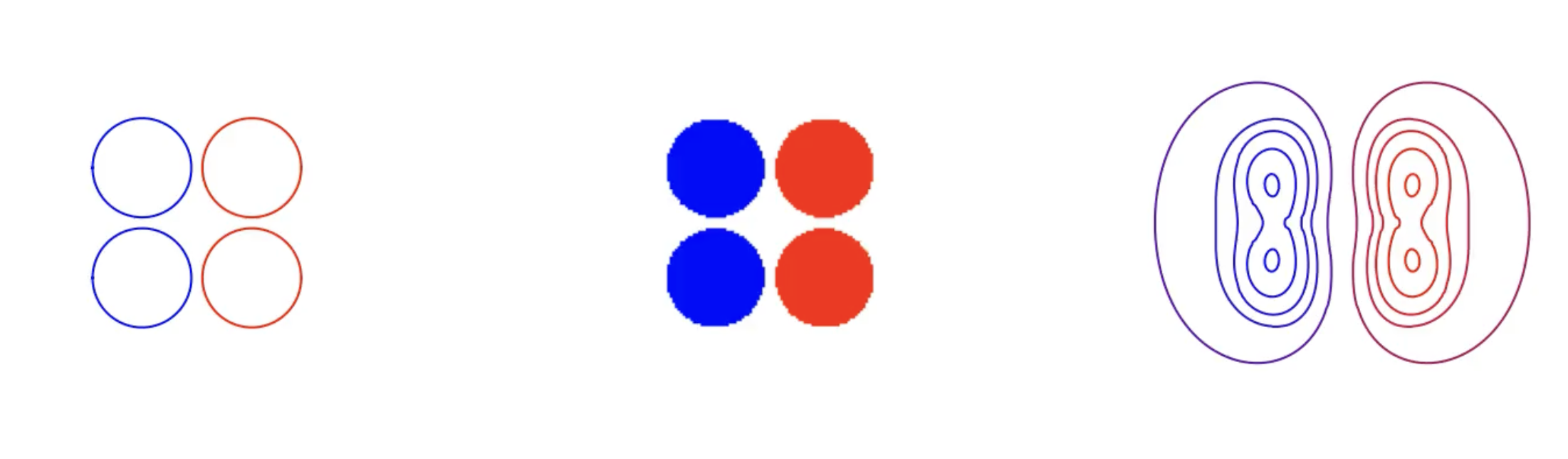}};
    
    \node[inner sep=0pt, anchor=center] (O20T0) at ([yshift=\ygap] O10T0.south) {\includegraphics[width=0.5\paperwidth]{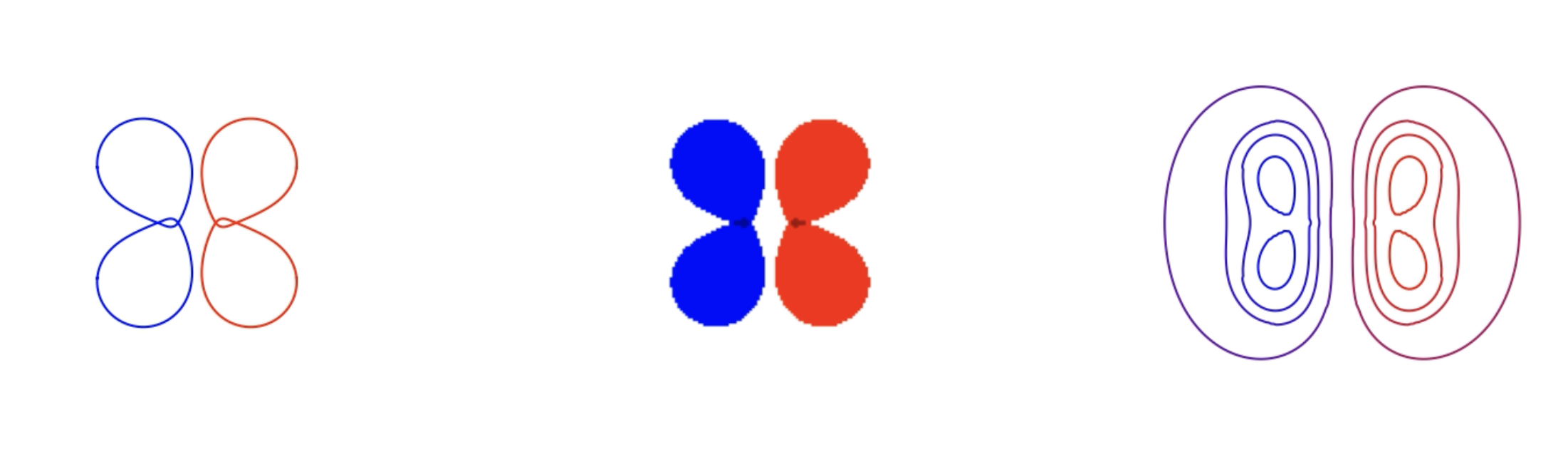}};

    \node[inner sep=0pt, anchor=center] (O30T0) at ([yshift=\ygap] O20T0.south) {\includegraphics[width=0.5\paperwidth]{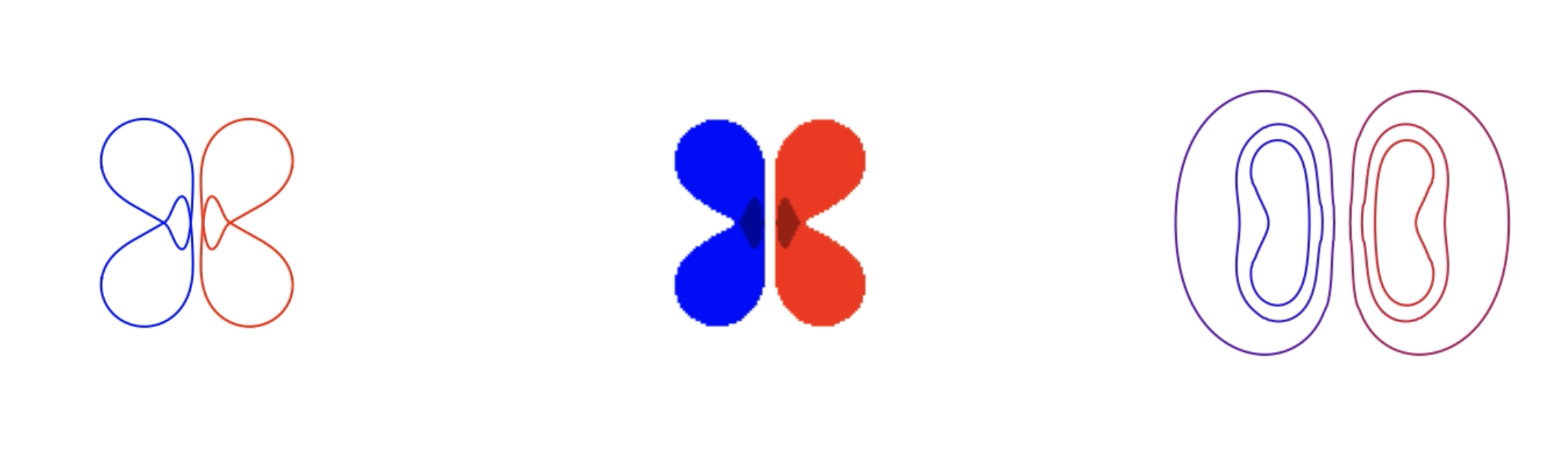}};

    \node[inner sep=0pt, anchor=center] (O40T0) at ([yshift=\ygap] O30T0.south) {\includegraphics[width=0.5\paperwidth]{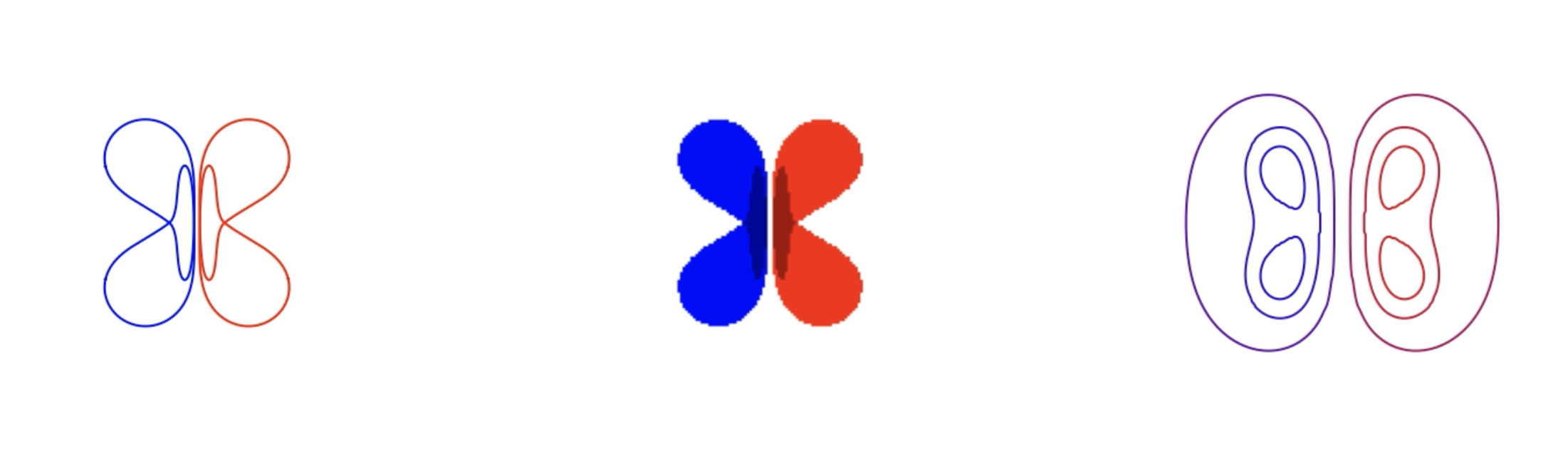}};

    \node[inner sep=0pt, anchor=center] (O50T0) at ([yshift=\ygap] O40T0.south) {\includegraphics[width=0.5\paperwidth]{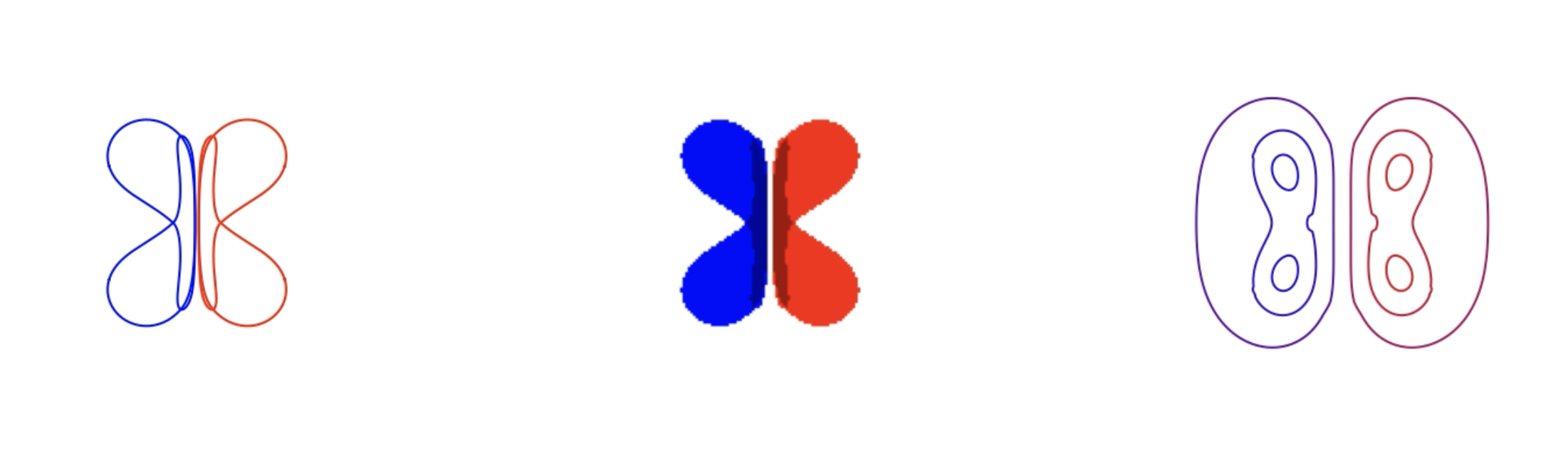}};

    \node (LO10) at ([xshift=-0.1mm]O10T0.west) {\rotatebox[origin=c]{90}{$t=0$}};
    \node (LO20) at ([xshift=-0.1mm]O20T0.west) {\rotatebox[origin=c]{90}{$t=1$}};
    \node (LO30) at ([xshift=-0.1mm]O30T0.west) {\rotatebox[origin=c]{90}{$t=2$}};
    \node (LO40) at ([xshift=-0.1mm]O40T0.west) {\rotatebox[origin=c]{90}{$t=3$}};
    \node (LO50) at ([xshift=-0.1mm]O50T0.west) {\rotatebox[origin=c]{90}{$t=4$}};

    \node[anchor=south, xshift=0mm, yshift=1mm] at ($(O10T0.north west)!0.1666!(O10T0.north east)$) {$z, \tilde{z},\bar{z},-z$};
    \node[anchor=south,  xshift=0mm, yshift=1mm] at ($(O10T0.north west)!0.5000!(O10T0.north east)$) {$F$};
    \node[anchor=south, xshift=0mm, yshift=1mm] at ($(O10T0.north west)!0.8333!(O10T0.north east)$) {$\phi$};
    
    \end{tikzpicture}}
    \caption{Right-handed screened system with odd-even symmetric initial given in Section \ref{sec:oddeven}.}
    \label{fig:RHS_oddevensym}
\end{figure}
\begin{figure}[htbp]
    \noindent
    \centerline{
    \begin{tikzpicture}
    
    \node[inner sep=0pt, anchor=center] (O10T0) at (current page.center)
    {\includegraphics[width=0.5\paperwidth]{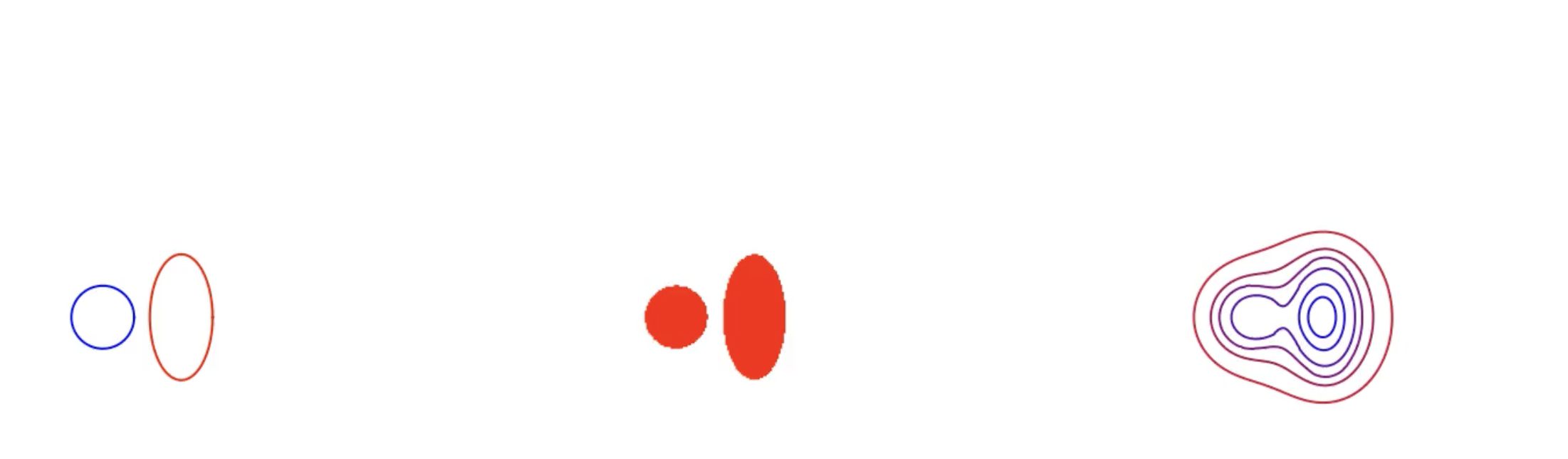}};
    
    \node[inner sep=0pt, anchor=center] (O20T0) at ([yshift=\ygap] O10T0.south) {\includegraphics[width=0.5\paperwidth]{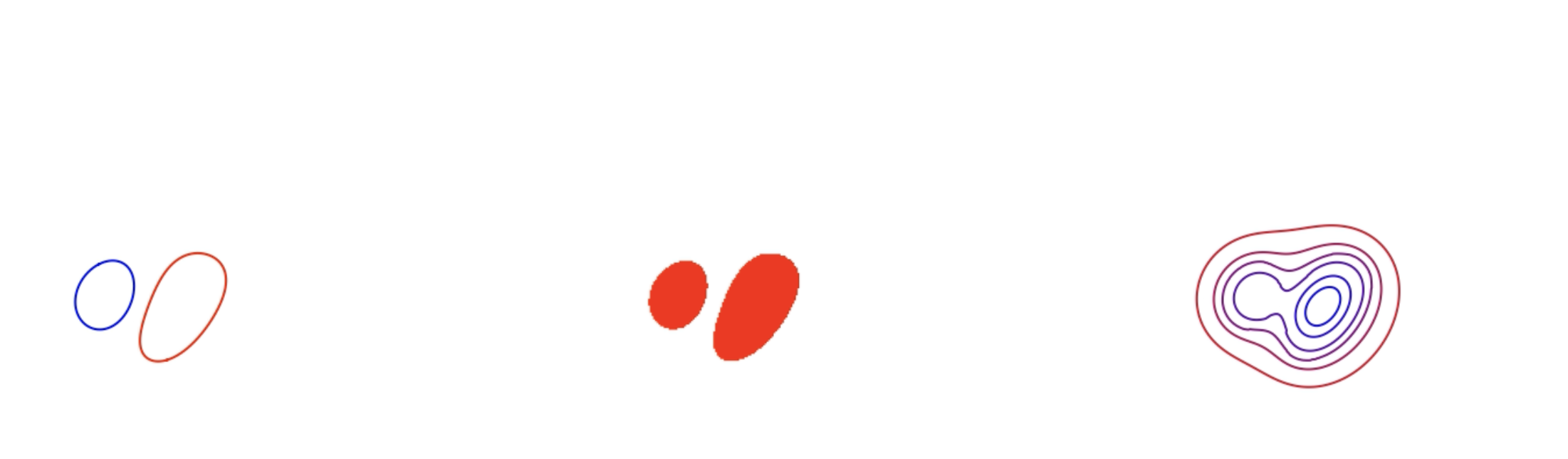}};

    \node[inner sep=0pt, anchor=center] (O30T0) at ([yshift=\ygap] O20T0.south) {\includegraphics[width=0.5\paperwidth]{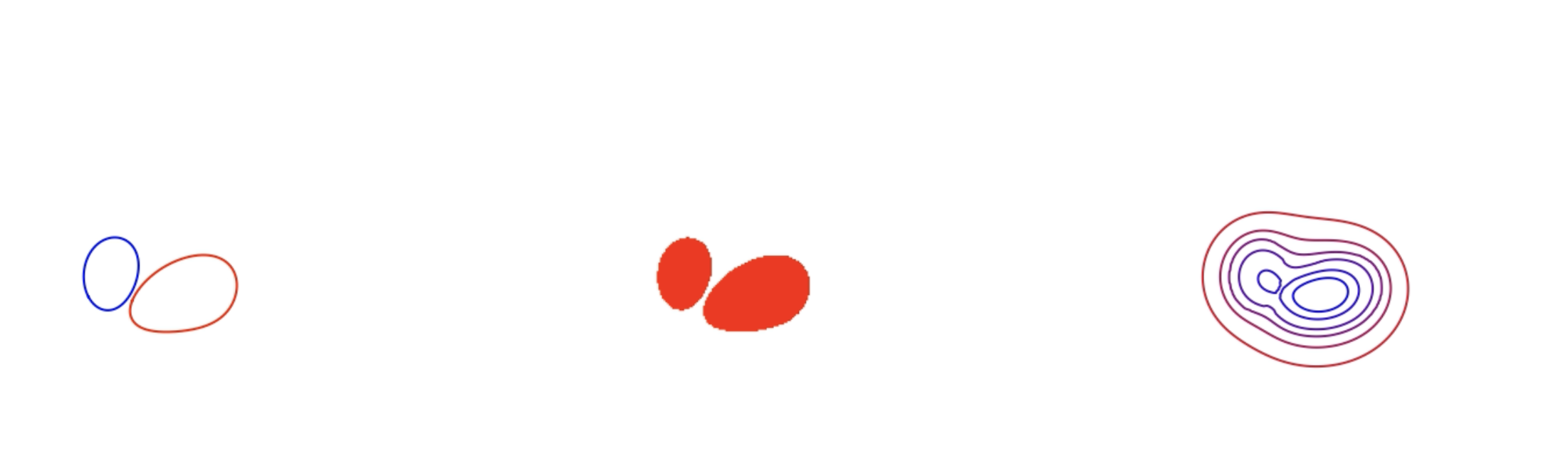}};

    \node[inner sep=0pt, anchor=center] (O40T0) at ([yshift=\ygap] O30T0.south) {\includegraphics[width=0.5\paperwidth]{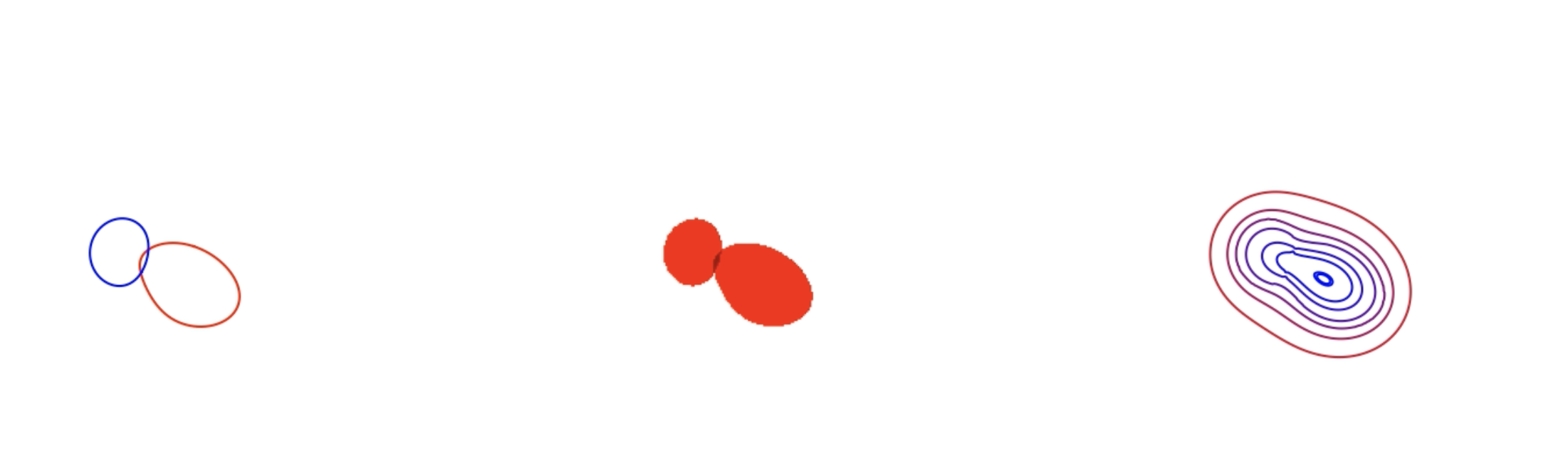}};

    \node[inner sep=0pt, anchor=center] (O50T0) at ([yshift=\ygap] O40T0.south) {\includegraphics[width=0.5\paperwidth]{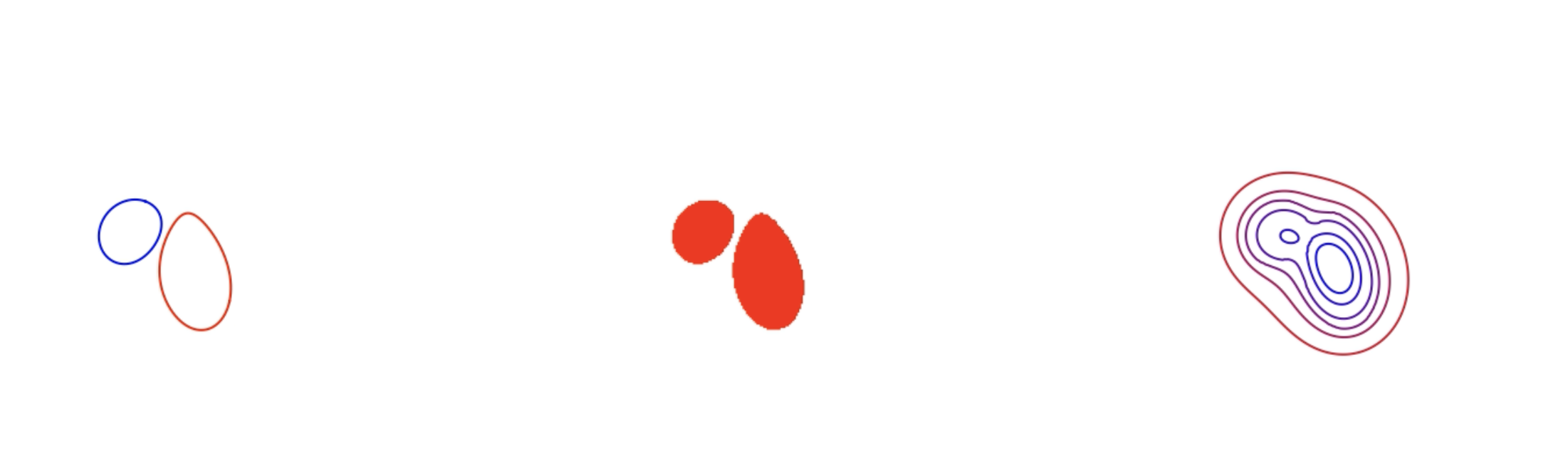}};

    \node (LO10) at ([xshift=-0.1mm]O10T0.west) {\rotatebox[origin=c]{90}{$t=0$}};
    \node (LO20) at ([xshift=-0.1mm]O20T0.west) {\rotatebox[origin=c]{90}{$t=5$}};
    \node (LO30) at ([xshift=-0.1mm]O30T0.west) {\rotatebox[origin=c]{90}{$t=10$}};
    \node (LO40) at ([xshift=-0.1mm]O40T0.west) {\rotatebox[origin=c]{90}{$t=15$}};
    \node (LO50) at ([xshift=-0.1mm]O50T0.west) {\rotatebox[origin=c]{90}{$t=20$}};

    \node[anchor=south, xshift=0mm, yshift=1mm] at ($(O10T0.north west)!0.1666!(O10T0.north east)$) {$z_\pm$};
    \node[anchor=south,  xshift=0mm, yshift=1mm] at ($(O10T0.north west)!0.5000!(O10T0.north east)$) {$F$};
    \node[anchor=south, xshift=0mm, yshift=1mm] at ($(O10T0.north west)!0.8333!(O10T0.north east)$) {$\phi$};
    
    \end{tikzpicture}}
    \caption{Left-handed screened system with data given in Section \ref{sec:lefthanded}}
    \label{fig:LHS_R1}
\end{figure}

\newpage
\vspace{1em}
\noindent{\bf{Acknowledgments.}} The work of P.C. was partially supported by NSF grant DMS-2606072. The work of Z.H. was partly supported by a Simons Collaboration Grant 601960. H.G. acknowledges support from the AMS--Simons Travel Grant and the hospitality of Vincent Martinez at CUNY--Hunter College during the summer of 2026.\\

\noindent{\bf{Disclosure on AI usage.}} 
Generative artificial intelligence tools were used during
manuscript preparation exclusively for literature search, software development, typesetting, and proofreading prose and  algebra. Every suggestion, computation, and
reference produced by the AI systems was independently verified and, where necessary, corrected
or reformulated by the authors, who take full responsibility for the manuscript as it is presently
written.

\end{document}